\documentclass[oneside,english]{amsart}
\usepackage[T1]{fontenc}
\usepackage[latin9]{inputenc}
\setcounter{secnumdepth}{5}
\setcounter{tocdepth}{5}
\usepackage{amsthm}
\usepackage{amstext}
\usepackage{amssymb}
\usepackage{cite}
\usepackage{esint}
\usepackage{float}
\usepackage{graphicx}
\usepackage{multirow}
\usepackage{subfig}
\floatstyle{plaintop}
\restylefloat{table}

\textheight=24cm \textwidth=15cm \oddsidemargin=0.5cm
\setlength{\topmargin}{-1cm}

\makeatletter

\numberwithin{equation}{section}
\numberwithin{figure}{section}
\theoremstyle{plain}
\newtheorem{thm}{\protect\theoremname}[section]
  \theoremstyle{definition}
  \newtheorem{defn}[thm]{\protect\definitionname}
  \theoremstyle{plain}
  \newtheorem{prop}[thm]{\protect\propositionname}
  \theoremstyle{remark}
  \newtheorem{rem}{\protect\remarkname}[section]
  \theoremstyle{plain}
  
  \theoremstyle{plain}
  \newtheorem{cor}[thm]{\protect\corollaryname}
  \theoremstyle{plain}
  \newtheorem{lem}[thm]{\protect\lemmaname}
  \theoremstyle{conjecture}

\makeatletter
\@addtoreset{thm}{section}
\makeatother

\makeatother

\usepackage{babel}
  \providecommand{\corollaryname}{Corollary}
  \providecommand{\definitionname}{Definition}
  \providecommand{\factname}{Fact}
  \providecommand{\lemmaname}{Lemma}
  \providecommand{\propositionname}{Proposition}
  \providecommand{\remarkname}{Remark}
  \providecommand{\conjecturename}{Conjecture}
\providecommand{\theoremname}{Theorem}

\begin{document}

\title[Stability for $2$-D and $3$-D Linear Systems Based on Curvature]
{Description of Stability for Two and\\ Three-Dimensional Linear Time-Invariant Systems\\ Based on Curvature and Torsion}

\author[Y. Wang]{Yuxin Wang}
\author[H. Sun]{Huafei Sun}
\author[Y. Song]{Yang Song}
\author[Y. Cao]{Yueqi Cao}
\author[S. Zhang]{Shiqiang Zhang}

\thanks{This subject is supported by the National Natural Science Foundations of China (No. 61179031.)}
\thanks{School of Mathematics and Statistics, Beijing Institute of Technology, Beijing 100081, P.~R.~China}
\thanks{E-mail: wangyuxin@bit.edu.cn huafeisun@bit.edu.cn frank230316@126.com 1120143609@bit.edu.cn 1120141935@bit.edu.cn}
\thanks{Huafei Sun is the corresponding author}

\begin{abstract}
This paper focuses on using curvature and torsion to describe the stability of linear time-invariant system. We prove that for a two-dimensional system $\dot{r}(t)= Ar(t)$, (i) if there exists an initial value, such that zero is not the limit of curvature of trajectory as $t\to+\infty$, then the zero solution of the system is stable; (ii) if there exists an initial value, such that the limit of curvature of trajectory is infinity as $t\to+\infty$, then the zero solution of the system is asymptotically stable. For a three-dimensional system, (i) if there exists a measurable set whose Lebesgue measure is greater than zero, such that for all initial values in this set, zero is not the limit of curvature of trajectory as $t\to+\infty$, then the zero solution of the system is stable; (ii) if the coefficient matrix is invertible, and there exists a measurable set whose Lebesgue measure is greater than zero, such that for all initial values in this set, the limit of curvature of trajectory is infinity as $t\to+\infty$, then the zero solution of the system is asymptotically stable; (iii) if there exists a measurable set whose Lebesgue measure is greater than zero, such that for all initial values in this set, zero is not the limit of torsion of trajectory as $t\to+\infty$, then the zero solution of the system is asymptotically stable.
\end{abstract}

\keywords{linear time-invariant systems, stability, asymptotic stability, curvature, torsion}

\subjclass[2000]{53A04 93C05 93D05 93D20}

\maketitle

\section{Introduction}

Stability is an important subject in the control theory, which is the premise for the control system to work properly. In 1892, Russian mathematician Lyapunov (\!\!\cite{Lyapunov}) gave a rigorous mathematical definition and research method for stability of motion, which laid the foundation for stability theory.
Linear systems are the most basic objects in the field of control science. Linear system theory is the basis of many other branches of system control theory. The stability of linear systems has some well-known criteria, such as the method of Lyapunov functions, Routh-Hurwitz criterion, Mikhailov criterion, and Nyquist criterion.

It is well known that the general relativity theory by Einstein used differential geometry where the curvatures describing how curved of the space play the significant role. As S.~S.~Chern said, curvature is the core concept of differential geometry. Curvatures are used in lots of research fields, such as \cite{Yao1,Yao2}. In the differential geometry of three-dimensional Euclidean space, curvature and torsion describe the degree of bending and twisting of the curve, respectively. Curvature and torsion are invariants of rigid motion. Curvature function $\kappa(t)>0$ and torsion function $\tau(t)$ determine the only curve in the space $\mathbb{R}^3$ (\!\!\cite{Carmo}).

The aim of this paper is to use curvature and torsion to describe the stability of the zero solution of linear time-invariant system $\dot{r}(t)=Ar(t)$. Our main results are as follows.\\

\begin{thm} \label{thm 2-dim}
Suppose that $\dot{r}(t)=Ar(t)$ is a linear time-invariant system, where $A$ is a $2\times2$ real matrix, $r(t)\in\mathbb{R}^2$, and $\dot{r}(t)$ is the derivative of $r(t)$.
Denote by $\kappa(t)$ curvature of trajectory of a solution $r(t)$. \\
$(1)$ If there exists an initial value $r(0)\in\mathbb{R}^2$, such that $\lim\limits_{t\to+\infty}\kappa(t)\neq0$ or $\lim\limits_{t\to+\infty}\kappa(t)$ does not exist,
then the zero solution of the system is stable; \\
$(2)$ if there exists an initial value $r(0)\in\mathbb{R}^2$, such that $\lim\limits_{t\to+\infty}\kappa(t)=+\infty$, then the zero solution of the system is asymptotically stable. \\
\end{thm}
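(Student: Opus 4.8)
The plan is to classify the qualitative behavior of the system $\dot r(t) = Ar(t)$ via the eigenvalues of the $2\times 2$ matrix $A$, and to show that the curvature hypotheses in (1) and (2) force us into the eigenvalue regimes that guarantee (Lyapunov) stability and asymptotic stability respectively. First I would recall the explicit formula for curvature of a plane curve: for a trajectory $r(t)=(x(t),y(t))$,
\[
\kappa(t)=\frac{|\dot x\,\ddot y-\ddot x\,\dot y|}{(\dot x^2+\dot y^2)^{3/2}}
=\frac{|\dot r \times \ddot r|}{|\dot r|^3},
\]
and since $\dot r = Ar$ and $\ddot r = A^2 r$, this becomes $\kappa(t)=|Ar(t)\times A^2 r(t)|\,/\,|Ar(t)|^3$. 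The numerator $|Ar\times A^2r| = |\det A|\cdot|r\times Ar|$ (using that $M\times Mv = (\det M)(u\times v)$ in two dimensions is not quite it — rather one uses $Ar\times A^2 r=\det(A)\,(r\times Ar)$), so $\kappa(t)=|\det A|\,|r(t)\times Ar(t)|\,/\,|Ar(t)|^3$.

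The core of the argument is a case analysis on the Jordan form of $A$. (i) If $\det A = 0$, then $\kappa(t)\equiv 0$ for every initial value, so the hypothesis of (1) cannot hold for any $r(0)$ and there is nothing to prove for (1); and the hypothesis of (2) likewise fails, so (2) is vacuous here too — actually I should be careful and instead argue that when $\det A=0$ the hypotheses simply force this case out. (ii) If $A$ has two real eigenvalues of the same sign (a node), or a repeated real eigenvalue, then along almost every trajectory $r(t)$ approaches the dominant eigendirection, $Ar(t)$ becomes nearly parallel to $r(t)$, and one computes that $|r\times Ar|/|Ar|^3 \to 0$ whenever the eigenvalues are positive (trajectory escapes to infinity, denominator blows up) — so a nonzero or nonexistent limit of $\kappa$ rules out positive eigenvalues, leaving $\mathrm{Re}\,\lambda \le 0$, hence stability; if moreover $\kappa\to+\infty$ we must be escaping the "denominator wins" scenario, forcing the eigenvalues to be negative, hence asymptotic stability. (iii) If $A$ has a pair of complex eigenvalues $\alpha\pm i\beta$, trajectories are spirals (or ellipses if $\alpha=0$); here I would compute $\kappa(t)$ explicitly in the canonical coordinates where $A=\begin{pmatrix}\alpha&-\beta\\\beta&\alpha\end{pmatrix}$, getting $\kappa(t)=\frac{\beta}{\sqrt{\alpha^2+\beta^2}}\,e^{-\alpha t}\cdot(\text{bounded trig factor})$ up to constants, so $\kappa\to 0$ iff $\alpha>0$, $\kappa$ stays bounded away from $0$ iff $\alpha=0$ (center, stable but not asymptotically), and $\kappa\to+\infty$ iff $\alpha<0$ (asymptotically stable). (iv) Real eigenvalues of opposite sign (saddle): then $\det A<0$, the system is unstable, and I must check that for every initial value $\kappa(t)\to 0$ — trajectories run off along the unstable eigendirection, making $Ar$ parallel to $r$ and the denominator dominate — so neither hypothesis (1) nor (2) can be met, consistent with the theorem.

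Assembling these cases: under hypothesis (1), cases (iv) and the positive-eigenvalue subcase of (ii)-(iii) are excluded, and every remaining case ($\mathrm{Re}\,\lambda\le 0$ for all eigenvalues, with semisimplicity automatic on the imaginary axis for $2\times2$ except the $\lambda=0$ double block which is itself excluded since it gives $\kappa\equiv0$) yields Lyapunov stability. Under hypothesis (2), only the cases with all eigenvalues having strictly negative real part survive, giving asymptotic stability. The main obstacle I anticipate is the bookkeeping in the degenerate and borderline cases — in particular handling a single Jordan block for a repeated eigenvalue $\lambda$ (where $r(t)=e^{\lambda t}(I+tN)r(0)$ and one must verify the precise rate at which $\kappa(t)\to 0$ or the limit fails to exist), and making rigorous the "almost every trajectory aligns with the dominant eigendirection" claim so that the existence of a single good initial value in the hypothesis is genuinely enough to pin down the eigenvalue configuration. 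Everything else is a direct substitution into the curvature formula followed by an asymptotic estimate of the resulting elementary expression.
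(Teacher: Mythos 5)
Your overall strategy---reduce to the real Jordan canonical form of $A$, compute $\kappa(t)$ explicitly in each case, and read off which eigenvalue configurations are compatible with the curvature hypotheses---is exactly the paper's strategy (Proposition \ref{Jordan2-2dim} plus Tables \ref{2-dim table 1}--\ref{2-dim table 3}), and your observation that $\dot r\times\ddot r=(\det A)\,(r\times Ar)$ in the plane, so that $\det A=0$ forces $\kappa\equiv0$, is a nice shortcut the paper does not exploit. However, there is a genuine gap at the step where you propose to ``compute $\kappa(t)$ explicitly in the canonical coordinates where $A=\left(\begin{smallmatrix}\alpha&-\beta\\ \beta&\alpha\end{smallmatrix}\right)$.'' Curvature is not invariant under a general linear change of variables $v=Pr$ (a circle becomes an ellipse, whose curvature is not constant), so the quantity you compute in canonical coordinates is the curvature of the transformed curve $v(t)=Pr(t)$, not of the trajectory $r(t)$ appearing in the hypothesis. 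To make the reduction to Jordan form legitimate you need a comparison lemma asserting that the three qualitative behaviors ($\kappa\to0$, $\kappa\to+\infty$, $\kappa$ eventually trapped in some $[C_1,C_2]$ with $C_1>0$) are preserved under equivalence transformations. The paper supplies exactly this as Theorem \ref{thm k1 2-dim}, proved via the singular value decomposition $P=U\Delta V^{\mathrm T}$, which yields the two-sided bound $\frac{\delta_2^2}{\delta_1^3}\kappa_r(t)\leqslant\kappa_v(t)\leqslant\frac{\delta_1^2}{\delta_2^3}\kappa_r(t)$ (in the two-dimensional analogue). Note also that this bound only transfers \emph{boundedness away from $0$ and $\infty$}, not existence of a finite nonzero limit---which is why the ``$\lim\kappa(t)$ does not exist'' branch of hypothesis $(1)$ can occur for general $A$ even though it never occurs for the canonical forms; your proposal does not engage with this point.

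A secondary, more minor issue is that your treatment of the real-eigenvalue cases (ii) and (iv) is heuristic (``$Ar(t)$ becomes nearly parallel to $r(t)$, and one computes that\ldots''). You acknowledge that the borderline and repeated-eigenvalue cases need the explicit substitution $r(t)=\mathrm e^{tA}r(0)$ and an asymptotic estimate; this is precisely the content of the paper's tables (e.g.\ for $\lambda_2<\lambda_1<0$ diagonal, $\kappa\to0$, $C$, or $+\infty$ according as $2\lambda_1>\lambda_2$, $=\lambda_2$, or $<\lambda_2$, so negative eigenvalues do \emph{not} force $\kappa\to+\infty$---fortunately only the converse direction is needed). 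One must also dispose of initial values with $x_0y_0=0$ in canonical coordinates, for which $\kappa\to0$ and the hypotheses are vacuous. With the comparison lemma added and the case computations carried out, your outline would close up into the paper's proof.
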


\begin{thm} \label{thm1}
 Suppose that $\dot{r}(t)=Ar(t)$ is a linear time-invariant system, where $A$ is a $3\times3$ real matrix, $r(t)\in\mathbb{R}^3$, and $\dot{r}(t)$ is the derivative of $r(t)$.
Let $\kappa(t)$ and $\tau(t)$ be curvature and torsion of trajectory of a solution $r(t)$, respectively. \\
$(1)$ If there exists a measurable set $E_1\subseteq \mathbb{R}^3$ whose Lebesgue measure is greater than $0$, such that for all $r(0)\in E_1$, $\lim\limits_{t\to+\infty}\kappa(t)\neq0$ or $\lim\limits_{t\to+\infty}\kappa(t)$ does not exist, then the zero solution of the system is stable; \\
$(2)$ if $A$ is invertible, and there exists a measurable set $E_2\subseteq \mathbb{R}^3$ whose Lebesgue measure is greater than $0$, such that for all $r(0)\in E_2$, $\lim\limits_{t\to+\infty}\kappa(t)=+\infty$, then the zero solution of the system is asymptotically stable; \\
$(3)$ if there exists a measurable set $E_3\subseteq \mathbb{R}^3$ whose Lebesgue measure is greater than $0$, such that for all $r(0)\in E_3$, $\lim\limits_{t\to+\infty}\tau(t)\neq0$ or $\lim\limits_{t\to+\infty}\tau(t)$ does not exist, then the zero solution of the system is asymptotically stable.
\end{thm}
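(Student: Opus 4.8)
The plan is to prove all three parts of Theorem~\ref{thm1} by contraposition, starting from the explicit solution $r(t)=e^{At}r(0)$ and the standard formulas $\kappa(t)=|Ar(t)\times A^{2}r(t)|/|Ar(t)|^{3}$ and $\tau(t)=\det(Ar(t),A^{2}r(t),A^{3}r(t))/|Ar(t)\times A^{2}r(t)|^{2}$. Concretely, for (1) I would show that if the zero solution is not stable, then $\kappa(t)$ is eventually defined with $\lim_{t\to+\infty}\kappa(t)=0$ for every $r(0)$ outside a finite union of proper linear subspaces of $\mathbb{R}^{3}$, hence for a.e.\ $r(0)$, so no positive-measure set $E_{1}$ as in the hypothesis can exist. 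For (2) I would show that when $A$ is invertible and the zero solution is not asymptotically stable, the set $\{r(0):\lim_{t\to+\infty}\kappa(t)=+\infty\}$ is Lebesgue-null. For (3) I would show that when the zero solution is not asymptotically stable, $\lim_{t\to+\infty}\tau(t)=0$ for a.e.\ $r(0)$; here a useful first reduction is that if $A$ is singular then $Ar(t),A^{2}r(t),A^{3}r(t)$ all lie in the subspace $\mathrm{Im}\,A$ of dimension at most $2$, so they are coplanar and $\tau\equiv0$ identically, leaving only the case $A$ invertible. (I adopt throughout the convention that a trajectory degenerating to a point or a line has $\kappa\equiv0$ and $\tau\equiv0$.)

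The engine is an asymptotic analysis of $r(t)$ via the real Jordan decomposition of $A$. Put $\alpha=\max\{\mathrm{Re}\,\lambda:\lambda\in\sigma(A)\}$ and split into cases by the sign of $\alpha$ and the Jordan structure of the eigenvalues with real part $\alpha$ --- a short list in dimension $3$, since a complex-conjugate pair fills a whole $2$-plane and is automatically semisimple, and a non-semisimple eigenvalue on the imaginary axis must be $0$. For $r(0)$ whose component in the dominant generalized eigenspace is nonzero --- the complement being a proper subspace, hence null --- the trajectory, to leading order, has one of three shapes: (i) it is asymptotic to a ray or line (dominant real eigenvalue, possibly defective, or several distinct real eigenvalues); (ii) it is a self-similar generalized logarithmic spiral $e^{\alpha t}\gamma(t)$ with $\gamma$ periodic and regular (dominant complex pair, alone or with a real eigenvalue of the same real part); or (iii) it is a polynomial curve, a parabola or a line (defective eigenvalue $0$). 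In shape (ii), self-similarity forces $\kappa(r(t))=e^{-\alpha t}\kappa_{0}(t)$ and $\tau(r(t))=e^{-\alpha t}\tau_{0}(t)$ with $\kappa_{0},\tau_{0}$ continuous periodic; in shapes (i) and (iii) one estimates directly, e.g.\ from the unit tangent $Ar(t)/|Ar(t)|$ converging to a fixed direction while $|Ar(t)|\to+\infty$. Consequently: if $\alpha>0$ then $\kappa\to0$ and $\tau\to0$ in all three shapes; if $\alpha=0$ then the generic trajectory is periodic or converges to a planar ellipse, so $\kappa$ stays bounded (and bounded away from $0$, by convexity of the limiting ellipse) while $\tau\to0$; the degenerate remainders (e.g.\ $0$ with Jordan type $[2,1]$, giving a straight-line trajectory) fall under the convention above. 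This yields (1) and (3); and it yields (2) because for invertible $A$, ``not asymptotically stable'' forces either $\alpha>0$, whence $\kappa\to0$ off a null set, or --- in dimension $3$ --- eigenvalues $\{-\gamma,\pm i\beta\}$ with $\gamma>0$ and $\beta\neq0$, whence $\kappa$ is a.e.\ eventually bounded.

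The main obstacle is shape (i) and, more broadly, every configuration in which the leading exponential term of $Ar\times A^{2}r$ or of $\det(Ar,A^{2}r,A^{3}r)$ vanishes because the leading behaviour is confined to a line or a plane: one must then expand $r(t)=e^{\alpha t}t^{k}(v+o(1))$ to the next order, identify which sub-dominant products survive in these antisymmetric expressions, and check that the exponential growth rate of the numerator is strictly less than that of the denominator. For instance, for three distinct real eigenvalues $\lambda_{1}<\lambda_{2}<\lambda_{3}$ with $\lambda_{3}>0$, a Vandermonde computation gives the numerator of $\tau$ the rate $\lambda_{1}+\lambda_{2}+\lambda_{3}$ and the denominator the rate $2(\lambda_{2}+\lambda_{3})$, so $\tau(t)\sim c\,e^{(\lambda_{1}-\lambda_{2}-\lambda_{3})t}$, and one must note $\lambda_{1}<\lambda_{2}+\lambda_{3}$ (which holds because $\lambda_{3}>0$). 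A secondary technical point is that with complex eigenvalues the surviving coefficients are bounded oscillatory functions rather than constants, so the comparison of exponential rates has to be made uniform --- the oscillatory factors are bounded above, and, where a lower bound is needed (the ``$\kappa\to+\infty$'' direction), bounded below by periodicity and regularity of the profile curve. Finally one should record that the qualitative alternatives ``$\kappa\to0$'', ``$\kappa$ bounded'', ``$\kappa\to+\infty$'', ``$\tau\to0$'' are preserved under the linear change of variables that puts $A$ in real Jordan form (lines map to lines, self-similar curves with bounded profile to the same, closed convex profiles to closed convex profiles).
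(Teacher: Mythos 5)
Your plan is correct and would deliver all three parts; it shares the paper's skeleton (reduce to real Jordan canonical form, analyse the asymptotics of $\kappa$ and $\tau$ along $r(t)=\mathrm{e}^{tA}r_0$, and discard a Lebesgue-null set of degenerate initial values), but it is organised genuinely differently. The paper first proves that the qualitative alternatives ``$\kappa\to0$'', ``$\kappa\to+\infty$'', ``$\kappa$ eventually trapped in $[C_1,C_2]$'' (and likewise for $|\tau|$) are invariant under the equivalence transformation $v=Pr$ by taking a singular value decomposition of $P$ and deriving the two-sided bounds $\tfrac{\delta_3^2}{\delta_1^3}\,\kappa_r\leqslant\kappa_v\leqslant\tfrac{\delta_1^2}{\delta_3^3}\,\kappa_r$ and $\tfrac{|\det P|}{\delta_1^4}|\tau_r|\leqslant|\tau_v|\leqslant\tfrac{|\det P|}{\delta_3^4}|\tau_r|$ (Theorems \ref{thm k1} and \ref{thm k2}); in a full write-up you should supply this lemma rather than the geometric gloss ``lines map to lines, \dots'', since it is exactly what makes these limits well-defined properties of the similarity class. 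The paper then computes $\kappa^2(t)$ and $\tau(t)$ in closed form for each of the four Jordan types and exhaustively tabulates every ordering and sign pattern of the eigenvalues (Tables \ref{table 1}--\ref{table 4}), reading the theorem off the tables; you instead argue by contraposition, split on the dominant real part $\alpha$ and the Jordan structure of the dominant block, and compare leading exponential rates. Your route is leaner in two places: the observation that a singular $A$ forces $\dot r,\ddot r,\dddot r\in\mathrm{Im}\,A$, hence coplanarity and $\tau\equiv0$, disposes of all $\det A=0$ rows of part (3) in one stroke where the paper checks them case by case; and the dominant-eigenvalue organisation avoids enumerating the twenty subcases of the diagonal case. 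The price is that the ``main obstacle'' you correctly flag --- the subdominant expansion needed when the leading term of $\dot r\times\ddot r$ or of the triple product cancels --- is precisely where the paper's explicit formulas do the work automatically, so carrying out your plan rigorously would reproduce most of those computations anyway.
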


The paper is organized as follows.
In Section 2, we review some basic concepts and propositions.
In Section 3, we give the relationship between curvatures of trajectories of two equivalent systems.
In Section 4 and Section 5 we prove Theorem \ref{thm 2-dim} and Theorem \ref{thm1}, respectively.
Several examples are given in Section 6. Finally, Section 7 concludes the paper.

\section{Preliminaries}
In this paper, $a\times b$ denotes the vector product of $a$ and $b$, and $(a, b, c)=(a\times b)\cdot c$ denotes the scalar triple product of $a$, $b$ and $c$, for any $a, b, c\in\mathbb{R}^3$. The norm $\|x\|$ denotes the Euclidean norm of $x=(x_1,x_2,\cdots,x_n)^\mathrm{T}\in\mathbb{R}^n$, namely, $\|x\|=\sqrt{\sum_{i=1}^n x_i^2}$. The determinant of matrix $A$ is denoted by $\mathrm{det}\, A$. The eigenvalues of matrix $A$ are denoted by $\lambda_i(A)\,(i=1,2,\cdots,n)$.

The following concepts and results can be found in \cite{Carmo,Chen,Horn,Lyapunov,Marsden,Perko,Strang}.

\begin{defn}[\!\!\cite{Carmo}]
Let $r:[0, +\infty)\to\mathbb{R}^3$ be a smooth curve. The functions
\begin{align*}
\kappa(t)=\frac{\left\|\dot{r}(t)\times\ddot{r}(t)\right\|}{\left\|\dot{r}(t)\right\|^3}
\end{align*}
and
\begin{align*}
\tau(t)=\frac{\left(\dot{r}(t),\ddot{r}(t),\dddot{r}(t)\right)}{\left\|\dot{r}(t)\times\ddot{r}(t)\right\|^2}
\end{align*}
are called curvature and torsion of curve $r(t)$, respectively.
\end{defn}

\begin{defn}[\!\!\cite{Perko}]
The system of ordinary differential equations
\begin{align}\label{system1}
\dot{r}(t)=Ar(t)
\end{align}
is called a linear time-invariant system, where $A$ is an $n\times n$ real constant matrix, $r(t)\in\mathbb{R}^n$, and $\dot{r}(t)$ is the derivative of $r(t)$.
\end{defn}

\begin{prop}[\!\!\cite{Perko}]\label{ODE}
Let $A$ be an $n\times n$ real matrix. Then for a given $r_0\in\mathbb{R}^n$, the initial value problem
\begin{align}
\left\{
\begin{aligned}\label{system}
\dot{r}(t)&=Ar(t),\\
r(0)&=r_0
\end{aligned}
\right.
\end{align}
has a unique solution given by
\begin{align*}
r(t)=\mathrm{e}^{tA}r_0.
\end{align*}
\par
The curve $r(t)$ is called the trajectory of system (\ref{system}) with the initial value $r_0\in\mathbb{R}^n$.
\end{prop}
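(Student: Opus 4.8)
The plan is to treat the two claims---that $r(t)=\mathrm{e}^{tA}r_0$ solves the initial value problem and that it is the unique solution---by developing the matrix exponential $\mathrm{e}^{tA}=\sum_{k=0}^{\infty}\frac{t^k}{k!}A^k$ together with its elementary calculus. First I would fix a submultiplicative matrix norm $\|\cdot\|$ and bound each term by $\left\|\frac{t^k}{k!}A^k\right\|\le \frac{|t|^k\|A\|^k}{k!}$. Since $\sum_{k\ge 0}\frac{|t|^k\|A\|^k}{k!}=\mathrm{e}^{|t|\,\|A\|}$ is finite, the Weierstrass $M$-test yields absolute and uniform convergence of the series on every compact interval $[-T,T]$, so $\mathrm{e}^{tA}$ is well defined and continuous in $t$.

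Next I would verify that $r(t)=\mathrm{e}^{tA}r_0$ is a solution. The formally differentiated series $\sum_{k\ge 1}\frac{t^{k-1}}{(k-1)!}A^k=A\sum_{j\ge 0}\frac{t^j}{j!}A^j$ obeys the same exponential bound and hence converges uniformly on compacts; term-by-term differentiation is therefore legitimate and gives $\frac{\mathrm{d}}{\mathrm{d}t}\mathrm{e}^{tA}=A\,\mathrm{e}^{tA}$. It follows that $\dot r(t)=A\,\mathrm{e}^{tA}r_0=Ar(t)$, while evaluating the series at $t=0$ gives $\mathrm{e}^{0}=I$ and thus $r(0)=r_0$. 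Along the way I would record that $A$ commutes with $\mathrm{e}^{tA}$, since every power $A^k$ commutes with $A$.

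For uniqueness, let $s(t)$ be any solution of system (\ref{system}) on $[0,+\infty)$ and set $u(t)=\mathrm{e}^{-tA}s(t)$. Using the product rule and the commutation just noted,
\[
\dot u(t)=-A\,\mathrm{e}^{-tA}s(t)+\mathrm{e}^{-tA}\dot s(t)=\mathrm{e}^{-tA}\bigl(-As(t)+As(t)\bigr)=0,
\]
so $u$ is constant and equals $u(0)=s(0)=r_0$. Since $\mathrm{e}^{tA}\mathrm{e}^{-tA}=I$ (the matrix-valued function $t\mapsto \mathrm{e}^{tA}\mathrm{e}^{-tA}$ has vanishing derivative and equals $I$ at $t=0$), multiplying $u(t)=r_0$ by $\mathrm{e}^{tA}$ gives $s(t)=\mathrm{e}^{tA}r_0$, proving uniqueness. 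The only genuinely delicate points are the interchange of differentiation with the infinite sum and the product-rule manipulation for matrix-valued functions; both are secured by the uniform convergence estimate above, restricted to compact $t$-intervals. I therefore expect these justifications, rather than any single computation, to be the main thing to get right.
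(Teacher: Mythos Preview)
Your argument is correct and is the standard textbook proof: establish absolute and uniform convergence of the exponential series via a submultiplicative norm, justify term-by-term differentiation to get $\frac{\mathrm{d}}{\mathrm{d}t}\mathrm{e}^{tA}=A\mathrm{e}^{tA}$, check the initial condition, and prove uniqueness by differentiating $\mathrm{e}^{-tA}s(t)$. There is nothing to compare against, however, because the paper does not prove this proposition at all---it is stated as a citation from Perko's \emph{Differential Equations and Dynamical Systems} and used as background. Your write-up is essentially the proof one finds in that reference, so it is appropriate here; just be aware that in the context of this paper the result is treated as a quoted preliminary rather than something the authors argue for.
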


\begin{defn}[\!\!\cite{Chen,Marsden}]
The solution $r(t)\equiv0$ of the differential equations $\dot{r}(t)=Ar(t)$ is called the zero solution of the linear time-invariant system.
If for every $\varepsilon >0$,
there exists a $\delta=\delta(\varepsilon)>0$,
such that $\|r(0)\|<\delta $ implies that $\|r(t)\|<\varepsilon, \,\forall t\geqslant0$,
where $r(t)=\mathrm{e}^{tA}r(0)$ is a solution of the equations $\dot{r}(t)=Ar(t)$,
and $r(0)$ is the initial value of $r(t)$,
then we say that the zero solution of $\dot{r}(t)=Ar(t)$ is stable.
If the zero solution is not stable, then we say that it is unstable.
\par
Suppose that the zero solution of system $\dot{r}(t)=Ar(t)$ is stable,
and for every solution $r(t)=\mathrm{e}^{tA}r(0)$,
there exists a $\delta_1\,(0<\delta_1\leqslant\delta)$,
such that $\|r(0)\|<\delta_1$ implies that $\lim\limits_{t\to+\infty}r(t)=0$,
then we say that the zero solution of $\dot{r}(t)=Ar(t)$ is asymptotically stable.
\end{defn}

\begin{prop}[\!\!\cite{Chen}]\label{asy.stable}
The zero solution of system (\ref{system1}) is stable if and only if all eigenvalues of matrix $A$ have nonpositive real parts, namely,
\begin{align*}
\mathrm{Re}\{\lambda_i(A)\}\leqslant 0\quad (i=1,2,\cdots,n),
\end{align*}
and the eigenvalues with zero real parts correspond only to the simple elementary factors of matrix $A$.
\par
The zero solution of system (\ref{system1}) is asymptotically stable if and only if every eigenvalues of matrix $A$ have a negative real part, namely,
\begin{align*}
\mathrm{Re}\{\lambda_i(A)\}<0\quad (i=1,2,\cdots,n).
\end{align*}
\end{prop}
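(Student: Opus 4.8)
The plan is to reduce the two stability notions to quantitative growth properties of the matrix exponential and then read those properties off from the Jordan canonical form of $A$. Since every solution of (\ref{system1}) has the form $r(t)=e^{tA}r(0)$ by Proposition \ref{ODE}, and $\|r(t)\|\leqslant\|e^{tA}\|\,\|r(0)\|$, I would first establish two clean equivalences: (a) the zero solution is stable if and only if $M:=\sup_{t\geqslant0}\|e^{tA}\|<+\infty$; and (b) the zero solution is asymptotically stable if and only if $\|e^{tA}\|\to0$ as $t\to+\infty$. The sufficiency directions are immediate from that bound (take $\delta=\varepsilon/M$ for (a); for (b) note decay forces boundedness, hence stability, and $\|r(t)\|\leqslant\|e^{tA}\|\,\|r(0)\|\to0$). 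For necessity I would exploit homogeneity: feeding test initial values $r(0)=(\delta/2)u$ with $u$ ranging over unit vectors into the definition recovers a uniform bound $\|e^{tA}u\|<2/\delta$, giving $M<+\infty$ in case (a); in case (b) the same device applied to the standard basis shows each column of $e^{tA}$ tends to $0$, hence $\|e^{tA}\|\to0$ by equivalence of norms in finite dimension.

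Next I would pass to the Jordan form. Writing $P^{-1}AP=J$ with $J$ block diagonal gives $e^{tA}=P\,e^{tJ}P^{-1}$, so $\|e^{tA}\|$ and $\|e^{tJ}\|$ are comparable up to the factor $\|P\|\,\|P^{-1}\|$; in particular boundedness and decay of one are equivalent to those of the other (working over $\mathbb{C}$ is harmless, since the spectral norm of the real matrix $e^{tA}$ is unchanged under complexification). For a single $m\times m$ Jordan block with eigenvalue $\lambda=\alpha+\mathrm{i}\beta$ a direct computation gives $e^{tJ_\lambda}=e^{\lambda t}N(t)$, where $N(t)$ is upper triangular with entries $t^{k}/k!$ on the $k$-th superdiagonal. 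Since $|e^{\lambda t}|=e^{\alpha t}$, every entry is $e^{\alpha t}$ times a polynomial in $t$, and the asymptotics split into three cases: if $\alpha<0$ then each entry tends to $0$; if $\alpha=0$ then the block is bounded exactly when $m=1$ (otherwise the superdiagonal term $t\,e^{\mathrm{i}\beta t}$ is unbounded); and if $\alpha>0$ then $\|e^{tJ_\lambda}\|\to+\infty$.

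Assembling the blocks, $\sup_{t}\|e^{tJ}\|<+\infty$ holds if and only if every eigenvalue satisfies $\alpha\leqslant0$ and every eigenvalue with $\alpha=0$ has only $1\times1$ Jordan blocks, while $\|e^{tJ}\|\to0$ holds if and only if every eigenvalue satisfies $\alpha<0$. Translating ``only $1\times1$ Jordan blocks for $\lambda$'' into the statement's language is precisely the condition that the eigenvalues with zero real part correspond only to simple elementary divisors, and combining this with equivalences (a)--(b) yields both halves of the proposition. I expect the main obstacle to be the critical case $\mathrm{Re}\,\lambda=0$: one must argue carefully that a nontrivial Jordan block there produces genuinely unbounded solutions while size-one blocks keep $e^{tJ_\lambda}$ on the unit circle, and one must match this block-size dichotomy to the elementary-divisor formulation rather than to algebraic or geometric multiplicity alone.
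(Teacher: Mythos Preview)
Your argument is correct and is the standard textbook proof of this classical result. However, note that the paper does not actually prove this proposition: it is stated in the Preliminaries section as a cited fact from \cite{Chen}, with no proof supplied. So there is no ``paper's own proof'' to compare against --- the authors simply quote the result and use it as a black box throughout Sections \ref{Section 2-dim} and \ref{SectionJordan}.

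That said, your proof sketch is sound. The reduction of stability and asymptotic stability to boundedness and decay of $\|e^{tA}\|$ via homogeneity is clean, the passage to Jordan form via $e^{tA}=P e^{tJ}P^{-1}$ and the condition-number bound is correct, and your blockwise analysis of $e^{tJ_\lambda}=e^{\lambda t}\sum_{k=0}^{m-1}\frac{t^k}{k!}N^k$ correctly isolates the three regimes $\alpha<0$, $\alpha=0$, $\alpha>0$. The identification of ``only $1\times1$ Jordan blocks at $\lambda$'' with ``$\lambda$ has only simple elementary divisors'' is exactly right, and your flagged obstacle --- the $\mathrm{Re}\,\lambda=0$ borderline --- is indeed the only place requiring care, which you have handled.
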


\begin{prop}[\!\!\cite{Chen}]
Suppose that $A$ and $B$ are two $n\times n$ real matrices, and there exists an $n\times n$ real invertible matrix $P$, such that $A=P^{-1}BP$. For system (\ref{system1}), let $v(t)=Pr(t)$. Then the system after the transformation is
\begin{align}\label{system2}
\dot{v}(t)=Bv(t).
\end{align}
System (\ref{system2}) is said to be equivalent to system (\ref{system1}), and $v(t)=Pr(t)$ is called an equivalence transformation.
\end{prop}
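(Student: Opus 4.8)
The plan is to verify the transformed equation by direct differentiation, exploiting the crucial fact that $P$ is a constant (time-independent) matrix and therefore commutes with the time derivative. Since $v(t)=Pr(t)$ and the entries of $P$ do not depend on $t$, differentiation acts entrywise and I immediately obtain $\dot{v}(t)=P\dot{r}(t)$. This first step is where the constancy of $P$ is essential: if $P$ were to depend on $t$, a product-rule term $\dot{P}r(t)$ would appear and the conclusion would fail.

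Next I substitute the governing equation $\dot{r}(t)=Ar(t)$ into this expression to get $\dot{v}(t)=PAr(t)$, and then invoke the hypothesis $A=P^{-1}BP$. Associativity of matrix multiplication together with the cancellation $PP^{-1}=I$ yields $PA=P(P^{-1}BP)=(PP^{-1})BP=BP$, so that $\dot{v}(t)=BPr(t)$. Finally, recognizing $Pr(t)=v(t)$, I conclude $\dot{v}(t)=Bv(t)$, which is precisely equation (\ref{system2}).

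There is essentially no obstacle in this argument; it is a short algebraic chain built from linearity of the derivative and the similarity relation $A=P^{-1}BP$. The only conceptual point worth stating explicitly is the role of invertibility of $P$: it ensures the transformation is a genuine equivalence, since the inverse change of variables $r(t)=P^{-1}v(t)$ recovers the original system $\dot{r}(t)=Ar(t)$, so the two systems carry the same dynamical information.
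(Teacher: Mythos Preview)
Your argument is correct; the chain $\dot v = P\dot r = PAr = BPr = Bv$ is exactly the standard verification, and your remarks about the constancy of $P$ and its invertibility are apt. Note that the paper does not actually supply a proof of this proposition---it is quoted from \cite{Chen} as a known result---so there is nothing further to compare.
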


\begin{prop}[\!\!\cite{Chen}]
Let $A$ and $B$ be two $n\times n$ real matrices, and $A$ is similar to $B$. Then the zero solution of the system $\dot{r}(t)=Ar(t)$ is (asymptotically) stable if and only if the zero solution of the system $\dot{v}(t)=Bv(t)$ is (asymptotically) stable.
\end{prop}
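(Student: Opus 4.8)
The plan is to reduce the claim to a uniform comparison of solution norms, exploiting the fact that an equivalence transformation distorts lengths only by bounded factors. Fix an invertible real matrix $P$ with $A=P^{-1}BP$. Then $A^{k}=P^{-1}B^{k}P$ for all $k$, hence $\mathrm{e}^{tA}=P^{-1}\mathrm{e}^{tB}P$ for every $t$, so the substitution $v(t):=Pr(t)$ is a bijection from the solutions of $\dot r=Ar$ onto the solutions of $\dot v=Bv$, carrying the initial value $r(0)$ to $v(0)=Pr(0)$. The operator–norm inequalities $\|P^{-1}\|^{-1}\|x\|\le\|Px\|\le\|P\|\,\|x\|$ applied to $x=r(t)$ give, for every $t\ge 0$,
\begin{align*}
\|P\|^{-1}\,\|v(t)\|\ \le\ \|r(t)\|\ \le\ \|P^{-1}\|\,\|v(t)\|,
\end{align*}
so the trajectory $r(t)$ is squeezed between fixed multiples of $v(t)$, uniformly in $t$; since $B=PAP^{-1}$ as well, the situation is symmetric in $A$ and $B$.

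From here both equivalences are routine $\varepsilon$–$\delta$ bookkeeping. For stability: given $\varepsilon>0$, choose $\delta'$ from the stability of $0$ for $\dot v=Bv$ so that $\|v(0)\|<\delta'$ forces $\|v(t)\|<\varepsilon/\|P^{-1}\|$ for all $t\ge 0$; then $\delta:=\delta'/\|P\|$ works for $\dot r=Ar$, because $\|r(0)\|<\delta$ implies $\|v(0)\|\le\|P\|\,\|r(0)\|<\delta'$ and hence $\|r(t)\|\le\|P^{-1}\|\,\|v(t)\|<\varepsilon$. For asymptotic stability one keeps, in addition, the radius $\delta_1'$ on which $v(t)\to 0$: if $\|r(0)\|<\delta_1'/\|P\|$ then $v(t)\to 0$, so $\|r(t)\|\le\|P^{-1}\|\,\|v(t)\|\to 0$, and combined with the plain stability already established this is asymptotic stability of $0$ for $\dot r=Ar$. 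Interchanging $A$ and $B$ (i.e.\ replacing $P$ by $P^{-1}$) yields the converse implications.

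An alternative, shorter route is simply to quote Proposition~\ref{asy.stable}: similarity preserves not only the spectrum but also the Jordan structure, hence the elementary divisors, so the eigenvalue conditions characterizing stability (respectively asymptotic stability) hold for $A$ if and only if they hold for $B$. I would mention this but present the norm-comparison argument, as it is self-contained and does not even require the spectral characterization. There is no genuine obstacle; the only points needing a moment's care are that the asymptotic-stability clause must be checked on top of (not in place of) ordinary stability, and, in the eigenvalue-based alternative, that one must invoke invariance of the elementary divisors — not merely of the eigenvalues — which is exactly what the borderline case of Proposition~\ref{asy.stable} requires.
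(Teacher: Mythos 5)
Your proof is correct. Note that the paper does not prove this proposition at all---it is quoted from the reference \cite{Chen} as a known fact---so there is no in-paper argument to compare against; you have supplied a complete, self-contained justification. Your norm-comparison route is sound: the identity $\mathrm{e}^{tA}=P^{-1}\mathrm{e}^{tB}P$ correctly identifies $v(t)=Pr(t)$ as the corresponding solution of $\dot v=Bv$, the two-sided bound $\|P\|^{-1}\|v(t)\|\leqslant\|r(t)\|\leqslant\|P^{-1}\|\,\|v(t)\|$ is uniform in $t$, and your $\varepsilon$--$\delta$ bookkeeping (including the point that asymptotic stability requires verifying the convergence clause \emph{in addition to} plain stability, and the symmetry $B=PAP^{-1}$ for the converse) is all in order. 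The alternative you mention---invoking Proposition~\ref{asy.stable} together with the invariance of eigenvalues and elementary divisors under similarity---is equally valid and is closer in spirit to how the paper actually uses these facts (it repeatedly reduces to the real Jordan canonical form); your caution that one needs invariance of the elementary divisors, not merely of the spectrum, to handle the borderline case of zero real parts is exactly the right point to flag.
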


\begin{prop}[Real Jordan canonical form \cite{Horn}]\label{Jordan1}
Let $A$ be an $n\times n$ real matrix. Then $A$ is similar to a block diagonal real matrix
\begin{align*}
\begin{pmatrix}
 \begin{matrix}
  C_{n_1}(a_1,b_1)&&\\
  &C_{n_2}(a_2,b_2)&\\
  &&\ddots
 \end{matrix}
 &&\text{\LARGE$0$}\\
 &C_{n_p}(a_p,b_p)&\\
 \text{\LARGE$0$}&&
 \begin{matrix}
  J_{n_{p+1}}(\lambda_{p+1})&&\\
  &\ddots&\\
  &&J_{n_{r}}(\lambda_r)
 \end{matrix}
\end{pmatrix},
\end{align*}
where\\
$(1)$ for $k\in\{1,2,\cdots,p\}$, $\lambda_k=a_k+\sqrt{-1} b_k$ and $\bar\lambda_k=a_k-\sqrt{-1} b_k$
\ $(a_k,b_k\in\mathbb{R},\text{and}\ b_k>0)$ are eigenvalues, and
\begin{align*}
C_{n_k}(a_k,b_k)=
\begin{pmatrix}
\Lambda_k & I_2&&&\\[0.6em]
&\Lambda_k & I_2&&\\
&&\Lambda_k &\ddots&\\
&&&\ddots & I_2\\[0.5em]
&&&&\Lambda_k
\end{pmatrix}_{2n_k\times 2n_k},
\end{align*}
where
$\Lambda_k=
\begin{pmatrix}
 a_k&b_k\\
-b_k&a_k
\end{pmatrix},
I_2=
\begin{pmatrix}
1&0\\
0&1
\end{pmatrix};$\\\\
$(2)$ for $j\in\{p+1,p+2,\cdots,r\}$, $\lambda_j\in\mathbb{R}$ is a real eigenvalue, and
\begin{align*}
J_{n_j}(\lambda_j)=
\begin{pmatrix}
\lambda_j &1&&&\\[0.6em]
&\lambda_j &1&&\\
&&\lambda_j &\ddots&\\
&&&\ddots &1\\[0.5em]
&&&&\lambda_j
\end{pmatrix}_{n_j\times n_j}.
\end{align*}
\end{prop}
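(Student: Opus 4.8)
The plan is to derive the real form from the complex Jordan canonical form by pairing up conjugate blocks and passing to the real and imaginary parts of the corresponding Jordan chains. First I would invoke the classical complex Jordan theorem: there is an invertible $S\in\mathbb{C}^{n\times n}$ with $S^{-1}AS$ block diagonal, each block a Jordan block $J_m(\lambda)$. Because $A$ is real, its characteristic polynomial has real coefficients, so the non-real eigenvalues occur in conjugate pairs $\lambda=a+\sqrt{-1}\,b$ and $\bar\lambda=a-\sqrt{-1}\,b$ with $b>0$. The essential structural observation is that conjugation sends Jordan chains for $\lambda$ to Jordan chains for $\bar\lambda$ of the same length: if $\{v_1,\dots,v_m\}$ satisfies $Av_1=\lambda v_1$ and $Av_j=\lambda v_j+v_{j-1}$, then applying complex conjugation together with $\bar A=A$ gives $A\bar v_1=\bar\lambda\bar v_1$ and $A\bar v_j=\bar\lambda\bar v_j+\bar v_{j-1}$. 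Hence the blocks attached to $\lambda$ and to $\bar\lambda$ match in number and size, while each real eigenvalue already admits real generalized eigenvectors (the kernel of a power of the real matrix $A-\lambda_j I$ has a real basis) and thus yields the real block $J_{n_j}(\lambda_j)$ unchanged.

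The core step is the real change of basis on each conjugate pair of blocks. For a length-$n_k$ chain $\{v_1,\dots,v_{n_k}\}$ belonging to $\lambda=a_k+\sqrt{-1}\,b_k$, set $u_j=\mathrm{Re}\,v_j$ and $w_j=\mathrm{Im}\,v_j$, and order the $2n_k$ real vectors as $u_1,w_1,\dots,u_{n_k},w_{n_k}$. I would verify two things. First, these real vectors are $\mathbb{R}$-linearly independent: a real relation $\sum_j(\alpha_j u_j+\beta_j w_j)=0$ rewrites, via $u_j=\tfrac12(v_j+\bar v_j)$ and $w_j=\tfrac1{2\sqrt{-1}}(v_j-\bar v_j)$, as a complex relation among the $\mathbb{C}$-independent vectors $v_j,\bar v_j$, forcing all $\alpha_j=\beta_j=0$. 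Second, the matrix of $A$ in this basis is exactly $C_{n_k}(a_k,b_k)$: separating real and imaginary parts of $Av_1=\lambda v_1$ gives $Au_1=a_ku_1-b_kw_1$ and $Aw_1=b_ku_1+a_kw_1$, whose coordinate columns form $\Lambda_k$, while separating $Av_j=\lambda v_j+v_{j-1}$ gives $Au_j=a_ku_j-b_kw_j+u_{j-1}$ and $Aw_j=b_ku_j+a_kw_j+w_{j-1}$, which place a copy of $\Lambda_k$ on the diagonal $2\times2$ block and a copy of $I_2$ on the superdiagonal block.

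Finally I would assemble the construction. Collecting, for every conjugate pair, the real vectors $u_j,w_j$, and for every real eigenvalue the real generalized eigenvectors, into the columns of a single matrix $P$ produces an invertible $P\in\mathbb{R}^{n\times n}$, since these $\sum_k 2n_k+\sum_j n_j=n$ vectors are real-independent and their complex span is all of $\mathbb{C}^n$, so they form a real basis of $\mathbb{R}^n$. With respect to this basis the map $r\mapsto Ar$ is block diagonal with the blocks $C_{n_k}(a_k,b_k)$ and $J_{n_j}(\lambda_j)$, that is, $P^{-1}AP$ equals the asserted real Jordan form. I expect the only delicate point to be the bookkeeping in the second step---tracking the ordering $u_1,w_1,u_2,w_2,\dots$ and the sign of $b_k$ so that the off-diagonal coupling is precisely $I_2$ and the diagonal is precisely $\Lambda_k=\begin{pmatrix}a_k&b_k\\-b_k&a_k\end{pmatrix}$; everything else follows from the complex theory together with the conjugation symmetry forced by $A$ being real.
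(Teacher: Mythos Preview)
Your argument is correct and is the standard derivation of the real Jordan form from the complex one. However, there is nothing to compare it against: the paper does not prove Proposition~\ref{Jordan1} at all. It is listed in the Preliminaries with a citation to Horn and Johnson's \emph{Matrix Analysis} and is used only as a classification device to reduce Theorems~\ref{thm 2-dim} and~\ref{thm1} to finitely many canonical coefficient matrices. So your proof supplies what the paper simply quotes.

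One small remark on your bookkeeping: you have verified the block structure correctly. With the ordering $u_1,w_1,\dots,u_{n_k},w_{n_k}$ and $\lambda_k=a_k+\sqrt{-1}\,b_k$, the relations $Au_j=a_ku_j-b_kw_j+u_{j-1}$ and $Aw_j=b_ku_j+a_kw_j+w_{j-1}$ indeed produce $\Lambda_k=\begin{pmatrix}a_k&b_k\\-b_k&a_k\end{pmatrix}$ on the diagonal and $I_2$ in the $(j-1,j)$ superdiagonal block, matching the paper's convention exactly. The independence argument via $u_j=\tfrac12(v_j+\bar v_j)$, $w_j=\tfrac{1}{2\sqrt{-1}}(v_j-\bar v_j)$ is also fine; combined with the real Jordan chains for the real eigenvalues, you get $n$ real vectors whose complex span is all of $\mathbb{C}^n$, hence a real basis.
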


\section{Relationship Between the Curvatures of Two Equivalent Systems}\label{SectionRelationship}
In this section, we give the relationship between curvatures of trajectories of two equivalent systems. In fact, we have the following theorems.
\begin{thm}\label{thm k1}
Suppose that three-dimensional system $\dot{r}(t)=Ar(t)$ is equivalent to system $\dot{v}(t)=Bv(t)$, where
 $A=P^{-1}BP$, and $v(t)=Pr(t)$ is the equivalence transformation. Let $\kappa_r(t)$ and $\kappa_v(t)$ be curvatures of trajectories $r(t)$ and $v(t)$, respectively. Then we have
\begin{align*}
\lim\limits_{t\to+\infty}\kappa_v(t)=0 \iff &\lim\limits_{t\to+\infty}\kappa_r(t)=0, \nonumber\\
\lim\limits_{t\to+\infty}\kappa_v(t)=+\infty \iff &\lim\limits_{t\to+\infty}\kappa_r(t)=+\infty, \nonumber\\
\exists C_1, C_2>0, \ \exists T>0, \ \mathrm{s.t.,}&\ \kappa_v(t)\in\left[C_1, C_2\right], \forall t>T \nonumber\\
\iff \exists \tilde{C_1}, \tilde{C_2}>0, \ \exists \tilde{T}>0, \ \mathrm{s.t.,}&\ \kappa_r(t)\in\left[\tilde{C_1}, \tilde{C_2}\right], \forall t>\tilde{T}.
\end{align*}
\end{thm}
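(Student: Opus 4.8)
The plan is to exploit the fact that an equivalence transformation $v(t) = Pr(t)$ acts on the trajectory as a fixed linear isomorphism $P$, and to track how $\kappa$ transforms under such a map. Since $v = Pr$, we have $\dot v = P\dot r$ and $\ddot v = P\ddot r$, so by the curvature formula
\begin{align*}
\kappa_v(t) = \frac{\|P\dot r(t) \times P\ddot r(t)\|}{\|P\dot r(t)\|^3}.
\end{align*}
The key algebraic input is that for a fixed invertible $3\times 3$ matrix $P$ one has $Pa \times Pb = (\det P)\, (P^{-1})^{\mathrm{T}}(a\times b)$; hence
\begin{align*}
\kappa_v(t) = |\det P|\,\frac{\|(P^{-1})^{\mathrm{T}}\bigl(\dot r(t)\times\ddot r(t)\bigr)\|}{\|P\dot r(t)\|^3}.
\end{align*}
Now I would bound each factor using operator-norm inequalities: writing $u(t) = \dot r(t)/\|\dot r(t)\|$ and $w(t)$ for the unit vector in the direction of $\dot r(t)\times\ddot r(t)$, we get
\begin{align*}
\kappa_v(t) = \kappa_r(t)\cdot |\det P|\cdot \frac{\|(P^{-1})^{\mathrm{T}} w(t)\|}{\|Pu(t)\|^3},
\end{align*}
and since $u(t), w(t)$ are unit vectors, the extra factor lies between the fixed positive constants $|\det P|\,\sigma_{\min}\bigl((P^{-1})^{\mathrm T}\bigr)/\|P\|^3$ and $|\det P|\,\|(P^{-1})^{\mathrm T}\|/\sigma_{\min}(P)^3$, both independent of $t$. (One must note $\dot r(t)\times\ddot r(t)$ could vanish, but then $\kappa_r(t) = 0$ as well and the identity $\kappa_v = (\text{bounded factor})\cdot\kappa_r$ still holds trivially with both sides zero; this degenerate case needs a short remark.)

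With the two-sided bound $c_1\,\kappa_r(t) \le \kappa_v(t) \le c_2\,\kappa_r(t)$ for fixed $0 < c_1 \le c_2$ and all $t$ (or all $t$ large enough for the trajectory to be non-constant — if $r_0 = 0$ the trajectory is a point and all three statements are vacuous/symmetric), all three equivalences are immediate: $\kappa_v \to 0$ iff $\kappa_r \to 0$; $\kappa_v \to +\infty$ iff $\kappa_r \to +\infty$; and $\kappa_v$ is eventually trapped in a compact subinterval of $(0,\infty)$ iff $\kappa_r$ is, since one can take $[\tilde C_1, \tilde C_2] = [C_1/c_2,\, C_2/c_1]$ and conversely. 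I would present this as the final paragraph, handling all three lines of the display at once from the single sandwich inequality, rather than proving them separately.

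The only real obstacle is establishing the cross-product identity $Pa\times Pb = (\det P)(P^{-1})^{\mathrm T}(a\times b)$ cleanly and making sure the constants are genuinely $t$-independent; this is classical linear algebra (it follows from the adjugate formula, or from checking on a basis and using trilinearity of $(a,b,c)\mapsto \det[a\,|\,b\,|\,Pc]$), so I would either cite it or dispatch it in two lines. Everything else is bookkeeping with submultiplicativity of the operator norm. I would also flag at the outset that although the statement is phrased for three-dimensional systems, the same argument works verbatim, and indeed the planar case in Theorem~\ref{thm 2-dim} implicitly uses the $2$-D analogue where the "cross product" is the scalar $a_1 b_2 - a_2 b_1$ and the identity becomes $\det P \cdot(a_1b_2 - a_2 b_1)$ exactly.
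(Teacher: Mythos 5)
Your proposal is correct, and it arrives at exactly the same pivotal estimate as the paper --- a $t$-independent two-sided bound $c_1\,\kappa_r(t)\leqslant\kappa_v(t)\leqslant c_2\,\kappa_r(t)$ from which all three equivalences are read off at once --- but it gets there by a different key lemma. The paper factors $P=U\Delta V^{\mathrm{T}}$ via the singular value decomposition (Proposition \ref{SVD}), strips off the orthogonal factors using the invariance $\|Ua\times Ub\|=\|a\times b\|$ (Lemma \ref{lem orth}), and then bounds the diagonal part componentwise (Lemma \ref{lem k}), obtaining $\frac{\delta_3^2}{\delta_1^3}\kappa_r\leqslant\kappa_v\leqslant\frac{\delta_1^2}{\delta_3^3}\kappa_r$. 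You instead invoke the adjugate identity $Pa\times Pb=(\det P)(P^{-1})^{\mathrm{T}}(a\times b)$ and bound the resulting correction factor by operator norms; this yields an exact multiplicative formula $\kappa_v=\kappa_r\cdot|\det P|\,\|(P^{-1})^{\mathrm{T}}w\|/\|Pu\|^3$ with $u,w$ unit vectors, hence slightly sharper constants ($\delta_2\delta_3/\delta_1^3$ and $\delta_1\delta_2/\delta_3^3$ in terms of the singular values), and it has the added advantage that the $\det P$ factor is already explicit, which would make the torsion analogue (Theorem \ref{thm k2}) fall out of the same computation --- the paper has to reintroduce $\det P$ separately via Lemma \ref{prop det} there. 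Your handling of the degenerate case $\dot r\times\ddot r=0$ (both curvatures vanish simultaneously) and your remark that the planar case of Theorem \ref{thm k1 2-dim} follows from the scalar analogue are both sound. The one point worth stating slightly more carefully is that the problematic initial data are those with $Ar(0)=0$ (not only $r(0)=0$), since then $\dot r\equiv 0$ and the curvature is undefined; this affects the paper's proof equally and does not damage the argument.
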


\begin{thm}\label{thm k2}
Under the assumptions of Theorem \ref{thm k1}, let $\tau_r(t)$ and $\tau_v(t)$ be torsions of trajectories $r(t)$ and $v(t)$, respectively. Then we have
\begin{align*}
\lim\limits_{t\to+\infty}\tau_v(t)=0 \iff &\lim\limits_{t\to+\infty}\tau_r(t)=0, \nonumber\\
\lim\limits_{t\to+\infty}\tau_v(t)=\infty \iff &\lim\limits_{t\to+\infty}\tau_r(t)=\infty, \nonumber\\
\exists C_1, C_2>0, \ \exists T>0,& \ \mathrm{s.t.,}\ \left|\tau_v(t)\right|\in\left[C_1, C_2\right], \forall t>T \nonumber\\
\iff \exists \tilde{C_1}, \tilde{C_2}>0, \ \exists \tilde{T}>0,& \ \mathrm{s.t.,}\ \left|\tau_r(t)\right|\in\left[\tilde{C_1}, \tilde{C_2}\right], \forall t>\tilde{T}.
\end{align*}
\end{thm}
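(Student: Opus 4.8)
The plan is to reduce the whole statement to a single pointwise identity relating $\tau_r(t)$ and $\tau_v(t)$, following the same strategy one would use to prove Theorem \ref{thm k1}. Write $M=P^{-1}$, so that $r(t)=Mv(t)$; since $M$ is a constant matrix, differentiation commutes with it and $\dot r=M\dot v$, $\ddot r=M\ddot v$, $\dddot r=M\dddot v$. I would then invoke the two standard multilinear-algebra identities for a $3\times3$ invertible matrix $M$: the scalar triple product transforms by $(Mu,Mw,Mz)=(\det M)\,(u,w,z)$, and the vector product transforms by $(Mu)\times(Mw)=(\det M)\,M^{-\mathrm{T}}(u\times w)$, where $M^{-\mathrm{T}}=(M^{-1})^{\mathrm{T}}$ (equivalently, $(\det M)\,M^{-\mathrm{T}}$ is the cofactor matrix of $M$). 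In particular $\dot r\times\ddot r=(\det M)\,M^{-\mathrm{T}}(\dot v\times\ddot v)$, and because $M^{-\mathrm{T}}$ is invertible this vector vanishes exactly when $\dot v\times\ddot v$ does; hence $\tau_r(t)$ and $\tau_v(t)$ are defined for exactly the same values of $t$, so that each of the three equivalences compares two statements about the same set of parameters.

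Substituting these identities into the definition of torsion gives, for every $t$ in this common domain,
\begin{align*}
\tau_r(t)&=\frac{(\det M)\,(\dot v,\ddot v,\dddot v)}{(\det M)^2\,\bigl\|M^{-\mathrm{T}}(\dot v\times\ddot v)\bigr\|^2}\\
&=\underbrace{\frac{\bigl\|\dot v\times\ddot v\bigr\|^2}{(\det M)\,\bigl\|M^{-\mathrm{T}}(\dot v\times\ddot v)\bigr\|^2}}_{g(t)}\,\tau_v(t).
\end{align*}
The crucial point is that $|g(t)|$ is bounded above and below by positive constants depending only on $P$. Since $M^{-\mathrm{T}}$ and its inverse $M^{\mathrm{T}}$ are bounded invertible linear maps, there are constants $0<m\le L$ with $m\|w\|\le\|M^{-\mathrm{T}}w\|\le L\|w\|$ for every $w\in\mathbb{R}^3$; applying this with $w=\dot v(t)\times\ddot v(t)$ shows that $\|\dot v\times\ddot v\|^2/\|M^{-\mathrm{T}}(\dot v\times\ddot v)\|^2$ stays in $[L^{-2},m^{-2}]$, and dividing by $|\det M|>0$ yields $\alpha\le|g(t)|\le\beta$ with $\alpha=(|\det M|\,L^2)^{-1}$ and $\beta=(|\det M|\,m^2)^{-1}$. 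Thus $\alpha\,|\tau_v(t)|\le|\tau_r(t)|\le\beta\,|\tau_v(t)|$ on the common domain.

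From this two-sided comparison the three equivalences are immediate. If $\tau_v(t)\to0$ then $|\tau_r(t)|\le\beta|\tau_v(t)|\to0$, and conversely $|\tau_v(t)|\le\alpha^{-1}|\tau_r(t)|\to0$; if $|\tau_v(t)|\to\infty$ then $|\tau_r(t)|\ge\alpha|\tau_v(t)|\to\infty$, and symmetrically; and if $|\tau_v(t)|\in[C_1,C_2]$ for all $t>T$ then $|\tau_r(t)|\in[\alpha C_1,\beta C_2]$ for all $t>T$, the reverse implication holding with $\tilde T=T$. (Note that $g(t)$ even has the constant sign of $\det M$, so $\tau_r$ and $\tau_v$ keep a fixed sign relationship, although this is not needed here.)

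The only step that requires genuine care, rather than mere bookkeeping, is the vector-product transformation law $(Mu)\times(Mw)=(\det M)\,M^{-\mathrm{T}}(u\times w)$ and the resulting verification that $\tau_r$ and $\tau_v$ have identical domains of definition — without this the equivalences could be ill-posed on one side. Everything afterward (boundedness of $|g|$ away from $0$ and $\infty$, and passing from the pointwise estimate to the limit statements) is routine, and I would expect the write-up to run in parallel with, and be shorter than, the proof of Theorem \ref{thm k1}.
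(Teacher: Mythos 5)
Your proof is correct and follows the same overall strategy as the paper's: both reduce the three equivalences to a two-sided bound $\alpha\,|\tau_v(t)|\leqslant|\tau_r(t)|\leqslant\beta\,|\tau_r(t)|$ (with $\alpha,\beta>0$ depending only on $P$), obtained by combining the scalar-triple-product identity $(Mu,Mw,Mz)=(\det M)(u,w,z)$ with a norm comparison between $\dot r\times\ddot r$ and $\dot v\times\ddot v$. The only substantive difference is how that norm comparison is produced: the paper writes $P=U\Delta V^{\mathrm{T}}$ via singular value decomposition and reuses the inequality $\delta_3^2\|\dot r\times\ddot r\|\leqslant\|\dot v\times\ddot v\|\leqslant\delta_1^2\|\dot r\times\ddot r\|$ already established in the proof of Theorem \ref{thm k1}, whereas you invoke the transformation law $(Mu)\times(Mw)=(\det M)\,M^{-\mathrm{T}}(u\times w)$ and then apply generic two-sided operator bounds to $M^{-\mathrm{T}}$. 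Your route is self-contained and arguably cleaner, since it also makes explicit that $\dot r\times\ddot r$ and $\dot v\times\ddot v$ vanish simultaneously (so the torsions share a common domain) and that $\tau_r$ and $\tau_v$ differ in sign exactly by $\operatorname{sgn}(\det P)$ --- both points the paper only touches on implicitly; the paper's route has the advantage of recycling machinery it already needed for the curvature theorem. Either way the passage from the pointwise sandwich to the three limit equivalences is the same and is carried out correctly in your write-up.
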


The following theorem is an analogue of Theorem \ref{thm k1} for two-dimensional case.

\begin{thm}\label{thm k1 2-dim}
Suppose that two-dimensional system $\dot{r}(t)=Ar(t)$ is equivalent to system $\dot{v}(t)=Bv(t)$, where
 $A=P^{-1}BP$, and $v(t)=Pr(t)$ is the equivalence transformation. Let $\kappa_r(t)$ and $\kappa_v(t)$ be curvatures of trajectories $r(t)$ and $v(t)$, respectively. Then we have
\begin{align*}
\lim\limits_{t\to+\infty}\kappa_v(t)=0 \iff &\lim\limits_{t\to+\infty}\kappa_r(t)=0, \nonumber\\
\lim\limits_{t\to+\infty}\kappa_v(t)=+\infty \iff &\lim\limits_{t\to+\infty}\kappa_r(t)=+\infty, \nonumber\\
\exists C_1, C_2>0, \ \exists T>0, \ \mathrm{s.t.},&\ \kappa_v(t)\in\left[C_1, C_2\right], \forall t>T \nonumber\\
\iff \exists \tilde{C_1}, \tilde{C_2}>0, \ \exists \tilde{T}>0, \ \mathrm{s.t.},&\ \kappa_r(t)\in\left[\tilde{C_1}, \tilde{C_2}\right], \forall t>\tilde{T}.
\end{align*}
\end{thm}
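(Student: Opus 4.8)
The plan is to show that $\kappa_v(t)$ and $\kappa_r(t)$ are comparable up to two fixed multiplicative constants depending only on $P$; all three equivalences in the statement then follow by elementary squeezing, exactly as they will in the three-dimensional case.

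First, since $v(t)=Pr(t)$ and $P$ is a constant matrix, differentiating gives $\dot v(t)=P\dot r(t)$ and $\ddot v(t)=P\ddot r(t)$. Regarding a planar curve $r=(x,y)^{\mathrm T}$ as the curve $(x,y,0)^{\mathrm T}$ in $\mathbb R^3$, the vector $\dot r(t)\times\ddot r(t)$ equals $\bigl(0,0,\det(\dot r(t),\ddot r(t))\bigr)^{\mathrm T}$, so $\|\dot r(t)\times\ddot r(t)\|=\bigl|\det(\dot r(t),\ddot r(t))\bigr|$. Using $\det(P\dot r,P\ddot r)=(\det P)\det(\dot r,\ddot r)$, I get the exact identity
\begin{align*}
\|\dot v(t)\times\ddot v(t)\|=|\det P|\;\|\dot r(t)\times\ddot r(t)\|.
\end{align*}
Next I would control the denominators $\|\dot r\|^3$ and $\|\dot v\|^3$. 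Because $P$ is invertible, $w\mapsto\|Pw\|$ attains a positive minimum $\alpha$ and a finite maximum $\beta$ on the unit circle (the smallest and largest singular values of $P$), whence $\alpha\|w\|\le\|Pw\|\le\beta\|w\|$ for all $w\in\mathbb R^2$. Applying this with $w=\dot r(t)$ and cubing, then combining with the numerator identity and the definition $\kappa(t)=\|\dot r\times\ddot r\|/\|\dot r\|^3$, yields
\begin{align*}
\frac{|\det P|}{\beta^3}\,\kappa_r(t)\le\kappa_v(t)\le\frac{|\det P|}{\alpha^3}\,\kappa_r(t)
\end{align*}
at every $t$ where the curves are regular; note that $\dot v(t)=0\iff\dot r(t)=0$, so both curvatures are defined on the same set of $t$. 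Set $m=|\det P|/\beta^3>0$ and $M=|\det P|/\alpha^3>0$, so that $m\,\kappa_r(t)\le\kappa_v(t)\le M\,\kappa_r(t)$ for all such $t$.

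The three equivalences now read off from this two-sided bound. If $\kappa_r(t)\to0$, then $0\le\kappa_v(t)\le M\kappa_r(t)\to0$; conversely $0\le\kappa_r(t)\le m^{-1}\kappa_v(t)\to0$. If $\kappa_r(t)\to+\infty$, then $\kappa_v(t)\ge m\kappa_r(t)\to+\infty$, and symmetrically. Finally, if $\kappa_v(t)\in[C_1,C_2]$ for all $t>T$, then $\kappa_r(t)\in[C_1/M,\,C_2/m]$ for all $t>T$ (this interval is contained in $(0,+\infty)$ since $0<C_1/M\le C_2/m$), and the converse direction is identical with $\tilde C_1=mC_1$, $\tilde C_2=MC_2$, $\tilde T=T$.

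I do not foresee a serious obstacle; the single point needing care is the identification of the planar "cross product'' with the determinant $\det(\dot r,\ddot r)$, which under $P$ scales by the exact scalar $\det P$, whereas $\|\dot r\|$ scales only within the bounded factors $\alpha,\beta$. It is this asymmetry — an exact factor in the numerator but merely bounded factors in the denominator — that prevents an equality of curvatures and is exactly why the conclusion is phrased in terms of limits and of membership in a compact interval, mirroring Theorem \ref{thm k1}.
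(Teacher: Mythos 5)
Your proposal is correct and follows essentially the same route as the paper's proof of Theorem \ref{thm k1}, which the paper says adapts to the two-dimensional case: establish a two-sided multiplicative comparison $m\,\kappa_r(t)\leqslant\kappa_v(t)\leqslant M\,\kappa_r(t)$ with constants determined by the singular values of $P$, then read off the three equivalences by squeezing. The only (harmless) deviation is that you exploit the exact planar identity $\|\dot v\times\ddot v\|=|\det P|\,\|\dot r\times\ddot r\|$ for the numerator instead of the two-sided singular-value bounds the paper uses in three dimensions, which merely sharpens the constants.
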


The proofs of Theorem \ref{thm k1} and \ref{thm k2} are shown in Appendix \ref{Proof}, and Theorem \ref{thm k1 2-dim} can be proved in a similar way.

Theorem \ref{thm k1}, \ref{thm k2} and \ref{thm k1 2-dim} give the relationship between curvatures (or torsions) of trajectories of two equivalent systems.

\section{Two-Dimensional Systems}\label{Section 2-dim}

In the case of two-dimensional systems, Proposition \ref{Jordan1} becomes the following result.

\begin{prop}\label{Jordan2-2dim}
Let $A$ be a $2\times2$ real matrix. Then the matrix $A$ is similar to one of the following three cases:
\begin{align}
(1)
\begin{pmatrix}\lambda_1&0\\0&\lambda_2\end{pmatrix}\
(\lambda_1, \lambda_2\in\mathbb{R}),
\qquad
(2)
\begin{pmatrix}a&b\\-b&a\end{pmatrix}\
(a, b\in\mathbb{R},\ b>0),
\qquad
(3)
\begin{pmatrix}\lambda&1\\0&\lambda\end{pmatrix}\
(\lambda\in\mathbb{R}).
\nonumber
\end{align}
\end{prop}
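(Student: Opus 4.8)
The plan is to obtain this as the $n=2$ specialization of the real Jordan canonical form of Proposition~\ref{Jordan1}, supplemented by a short self-contained verification. By Proposition~\ref{Jordan1}, $A$ is similar to a block diagonal matrix whose diagonal blocks are rotation–scaling blocks $C_{n_k}(a_k,b_k)$ of size $2n_k$ (with $b_k>0$) or real Jordan blocks $J_{n_j}(\lambda_j)$ of size $n_j$. Since the block sizes must sum to $2$, the only admissible configurations are: (a) two blocks $J_1(\lambda_1)$ and $J_1(\lambda_2)$, which is exactly case~$(1)$; (b) a single block $J_2(\lambda)=\begin{pmatrix}\lambda&1\\0&\lambda\end{pmatrix}$, which is case~$(3)$; and (c) a single block $C_1(a,b)=\begin{pmatrix}a&b\\-b&a\end{pmatrix}$ with $b>0$, which is case~$(2)$. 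These three possibilities exhaust all cases, so the proposition follows immediately.

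For a self-contained argument I would split on the sign of the discriminant $\Delta=(\mathrm{tr}\,A)^2-4\,\mathrm{det}\,A$ of the characteristic polynomial $p(\lambda)=\lambda^2-(\mathrm{tr}\,A)\lambda+\mathrm{det}\,A$. If $\Delta>0$, then $A$ has two distinct real eigenvalues $\lambda_1\neq\lambda_2$ with linearly independent real eigenvectors, so $A$ is similar to $\mathrm{diag}(\lambda_1,\lambda_2)$: case~$(1)$. If $\Delta=0$, then $A$ has a double real eigenvalue $\lambda=\tfrac12\,\mathrm{tr}\,A$; when $A=\lambda I$ we are again in case~$(1)$ (with $\lambda_1=\lambda_2$), and otherwise $N:=A-\lambda I\neq0$ is nilpotent with $N^2=0$, so choosing $u$ with $w:=Nu\neq0$ yields a basis $\{w,u\}$ in which $Aw=\lambda w$ and $Au=w+\lambda u$, i.e.\ $A$ is similar to $\begin{pmatrix}\lambda&1\\0&\lambda\end{pmatrix}$: case~$(3)$.

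The remaining case $\Delta<0$ is the only one requiring care. Here $A$ has complex conjugate eigenvalues $a\pm\sqrt{-1}\,b$ and no real eigenvalue; relabeling if necessary, take $\lambda=a+\sqrt{-1}\,b$ with $b>0$ and an eigenvector $z=x+\sqrt{-1}\,y$ with $x,y\in\mathbb{R}^2$. Separating $Az=\lambda z$ into real and imaginary parts gives $Ax=ax-by$ and $Ay=bx+ay$. The vectors $x$ and $y$ are $\mathbb{R}$-linearly independent, since a linear dependence would force $A$ to have a real eigenvector, contradicting $\Delta<0$. Hence $\{x,y\}$ is a basis of $\mathbb{R}^2$ in which $A$ is represented by $\begin{pmatrix}a&b\\-b&a\end{pmatrix}$: case~$(2)$. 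The main obstacle is thus confined to this last case: verifying the linear independence of $\mathrm{Re}\,z$ and $\mathrm{Im}\,z$ and fixing the sign convention so that $b>0$ (achieved by swapping $\lambda\leftrightarrow\bar\lambda$, equivalently by interchanging or negating a basis vector); everything else is a routine case analysis.
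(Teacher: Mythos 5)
Your proposal is correct and its main argument is exactly the paper's (implicit) proof: the paper justifies this proposition solely by specializing the real Jordan canonical form of Proposition~\ref{Jordan1} to $n=2$, which is precisely your enumeration of the admissible block configurations summing to size $2$. The additional self-contained discriminant argument is also sound (including the linear-independence check for $\mathrm{Re}\,z$, $\mathrm{Im}\,z$ and the sign normalization $b>0$), but it goes beyond what the paper records.
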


Note that we have Theorem \ref{thm k1 2-dim} in Section \ref{SectionRelationship}. In order to prove Theorem \ref{thm 2-dim}, we need only consider the three cases in Proposition \ref{Jordan2-2dim}.
\par
We notice that the initial value $r(0)=(x_0,y_0)^\mathrm{T}$ may affect curvature $\kappa(t)$ of curve $r(t)$.
For simplicity, in the calculations below, we always assume that $x_0y_0\neq 0$. It will be seen later (Section \ref{2-dim result}) that this assumption does not affect the correctness of the proof of Theorem \ref{thm 2-dim}.
\par
We give the calculation details of curvatures in Appendix \ref{cal 2-dim}.

\subsection{Case 1} \label{section 2-dim (1)}

\

First, we consider the case of the coefficient matrix is a real diagonal matrix, namely,
\begin{align*}
A=\begin{pmatrix}\lambda_1&0\\0&\lambda_2\end{pmatrix}\
(\lambda_1, \lambda_2\in\mathbb{R}).
\end{align*}
If $\lambda_1=\lambda_2=0$, then $\kappa(t)\equiv0$;
if $\lambda_1^2+\lambda_2^2 \neq 0$, then the square of curvature $\kappa(t)$ of curve $r(t)$ is
\begin{align*}
\kappa^2(t)
=\frac{\left\{\lambda_1 \lambda_2(\lambda_2-\lambda_1)x_0y_0\right\}^2 \cdot \mathrm{e}^{2(\lambda_1+\lambda_2)t}}
{\left\{(\lambda_1x_0)^2 \cdot \mathrm{e}^{2\lambda_1t}+(\lambda_2y_0)^2 \cdot \mathrm{e}^{2\lambda_2t}\right\}^3}.
\end{align*}

Through the analysis of  all situations of the eigenvalues, we have the following Table \ref{2-dim table 1}. Without loss of generality, we suppose that $\lambda_1\geqslant\lambda_2$.

\begin{table}[htbp]
\caption{Two-Dimensional Systems for Case 1}
$
\begin{array}{|c|c|c|c|c|c|}
\hline
 &\multirow{2}*{\text{Eigenvalues}} & \multirow{2}*{det$A$}
& {\text{Asymptotically}}  & \multirow{2}*{\text{Stable}} & \text{Curvature}\ \kappa(t) \\
 &&& {\text{Stable}} &  &  (t\to+\infty) \\ \hline
(1)& 0<\lambda_2<\lambda_1 &\neq0& {\text{No}} & {\text{No}} & \kappa(t)\to 0\\ \hline
(2)& 0<\lambda_2=\lambda_1 &\neq0& {\text{No}} & {\text{No}} & \kappa(t)\equiv0\\ \hline
(3)& 0=\lambda_2<\lambda_1 &0& {\text{No}} & {\text{No}} & \kappa(t)\equiv0\\ \hline
(4)& \lambda_2<0<\lambda_1 &\neq0& {\text{No}} & {\text{No}} & \kappa(t)\to 0\\ \hline
(5)& 0=\lambda_2=\lambda_1 &0& {\text{No}} & {\text{Yes}} & \kappa(t)\equiv 0\\ \hline
(6)& \lambda_2<0=\lambda_1 &0& {\text{No}} & {\text{Yes}} & \kappa(t)\equiv0\\ \hline
(7)& \lambda_2=\lambda_1<0 &\neq0& {\text{Yes}} & {\text{Yes}} & \kappa(t)\equiv 0\\ \hline
(8)& \lambda_2<\lambda_1<0 &\neq0& {\text{Yes}} & {\text{Yes}} &
\kappa(t)\to\left\{
\begin{aligned}
&0, &2\lambda_1>\lambda_2,\\
&C, &2\lambda_1=\lambda_2,\\
&+\infty, &2\lambda_1<\lambda_2.
\end{aligned}\right.
\\ \hline
\end{array}
$
\label{2-dim table 1}
\end{table}

\par In Table \ref{2-dim table 1}, the column of ``Asymptotically Stable'' presents the asymptotic stability of zero solution, the column of ``Stable'' presents the stability of zero solution, and $C$ denotes a positive constant, whose value depends on the initial value $r(0)$.
\par We see that if $\lim\limits_{t\to+\infty}\kappa(t)=+\infty$ or $C\ (0<C<+\infty)$, then the zero solution of the system is asymptotically stable.

\begin{rem}
If $\mathrm{det}\, A=0$, then the number $0$ is an eigenvalue of matrix $A$, by Proposition \ref{asy.stable}, the zero solution of the system is not asymptotically stable.
\end{rem}

\begin{rem}
We know that the eigenvalues of matrix $A$ correspond only to the Jordan blocks of $1\times1$ when $A$ is a diagonal matrix. By Proposition \ref{asy.stable}, if $A$ is a diagonal matrix, then the zero solution of system (\ref{system1}) is stable if and only if
$
\mathrm{Re}\{\lambda_i(A)\}\leqslant 0 ~ (i=1,2,\cdots,n).
$
\end{rem}

\begin{rem}
By the expression of curvature, for any given coefficient matrix $A$ of Case $1$, if for some initial value $r(0)\in\mathbb{R}^2$ which satisfies $x_0y_0\neq0$, we have $\lim\limits_{t\to+\infty}\kappa(t)=0$ (or +$\infty$, or a constant $C>0$, respectively), then for an arbitrary $r(0)\in\mathbb{R}^2$ satisfying $x_0y_0\neq0$, we still have $\lim\limits_{t\to+\infty}\kappa(t)=0$ (or +$\infty$, or a constant $\tilde{C}>0$, respectively). In fact, we have Theorem \ref{2-dim initial} in Section \ref{2-dim result}.
\end{rem}
By using the similar ways, we can give the results for Case $2$ and $3$.
\subsection{Case 2}

\

In the case of
\begin{align*}
A=\begin{pmatrix}a&b\\-b&a\end{pmatrix}\
(a, b\in\mathbb{R},\ b>0),
\end{align*}
the square of curvature $\kappa(t)$ of curve $r(t)$ is
\begin{align*}
\kappa^2(t)
=\frac{b^2}{(a^2+b^2)(x_0^2+y_0^2)\cdot \mathrm{e}^{2at}}.
\end{align*}

Through the analysis of all situations of the eigenvalues, we have the following Table \ref{2-dim table 2}.
\begin{table}[htbp]
\caption{Two-Dimensional Systems for Case 2}
$
\begin{array}{|c|c|c|c|c|c|}
\hline
 &\multirow{2}*{\text{Eigenvalues}} & \multirow{2}*{det$A$}
& {\text{Asymptotically}}  & \multirow{2}*{\text{Stable}} & \text{Curvature}\ \kappa(t) \\
 &&& {\text{Stable}} &  &  (t\to+\infty) \\ \hline
(1)& a>0 &\neq0& {\text{No}} & {\text{No}} & \kappa(t)\to 0\\ \hline
(2)& a=0 &\neq0& {\text{No}} & {\text{Yes}} & \kappa(t)\equiv C\\ \hline
(3)& a<0 &\neq0& {\text{Yes}} & {\text{Yes}} & \kappa(t)\to+\infty\\ \hline
\end{array}
$
\label{2-dim table 2}
\end{table}

\par In Table \ref{2-dim table 2}, $C$ denotes a positive constant, whose value depends on the initial value $r(0)$.
We see that if $\lim\limits_{t\to+\infty}\kappa(t)=+\infty$ or $C\ (0<C<+\infty)$, then the zero solution of the system is stable. In particular, if $\lim\limits_{t\to+\infty}\kappa(t)=+\infty$, then the zero solution of the system is asymptotically stable.

\subsection{Case 3}

\

In the case of
\begin{align*}
A=\begin{pmatrix}\lambda&1\\0&\lambda\end{pmatrix}\
(\lambda\in\mathbb{R}),
\end{align*}
the square of curvature $\kappa(t)$ of curve $r(t)$ is
\begin{align*}
\kappa^2(t)
=\frac{ \lambda^4 y_0^4 \cdot \mathrm{e}^{4\lambda t}}
{\left\{(\lambda x_0+\lambda y_0 \cdot t+y_0)^2+(\lambda y_0)^2\right\}^3 \cdot \mathrm{e}^{6\lambda t} }
=\frac{ \lambda^4 y_0^4 }
{g(t) \cdot \mathrm{e}^{2\lambda t} },
\end{align*}
where $g(t)$ is a polynomial in $t$. If $\lambda\neq0$, then $g(t)= \left( \lambda^6 y_0^6 \right) \cdot t^{6} + \sum_{i=0}^{5} a_i t^i$ is a polynomial of degree 6 in $t$; if $\lambda=0$, then $g(t)=y_0^6$ is a constant.

Through the analysis of all situations of the eigenvalues, we have the following Table \ref{2-dim table 3}.

\begin{table}[htbp]
\caption{Two-Dimensional Systems for Case 3}
$
\begin{array}{|c|c|c|c|c|c|}
\hline
 &\multirow{2}*{\text{Eigenvalues}} & \multirow{2}*{det$A$}
& {\text{Asymptotically}}  & \multirow{2}*{\text{Stable}} & \text{Curvature}\ \kappa(t) \\
 &&& {\text{Stable}} &  &  (t\to+\infty) \\ \hline
(1)& \lambda>0 &\neq0& {\text{No}} & {\text{No}} & \kappa(t)\to 0\\ \hline
(2)& \lambda=0 &0& {\text{No}} & {\text{No}} & \kappa(t)\equiv 0\\ \hline
(3)& \lambda<0 &\neq0& {\text{Yes}} & {\text{Yes}} & \kappa(t)\to+\infty\\ \hline
\end{array}
$
\label{2-dim table 3}
\end{table}

We find that if $\lim\limits_{t\to+\infty}\kappa(t)\neq0$, then the zero solution of the system is asymptotically stable.

\subsection{Results}\label{2-dim result}

\

We give the results in this subsection.
\par
We notice that if the initial value $r(0)=(x_0,y_0)^\mathrm{T}$ satisfies $x_0y_0=0$, then we have $\lim\limits_{t\to+\infty}\kappa(t)=0$. Hence we can obtain the following proposition by Table \ref{2-dim table 1}, \ref{2-dim table 2}, and \ref{2-dim table 3}.

\begin{prop}
Under the assumptions of Theorem \ref{thm 2-dim}, additionally assuming that $A$ is a matrix in real Jordan canonical form. \\
$(1)$ If there exists an initial value $r(0)\in\mathbb{R}^2$, such that $\lim\limits_{t\to+\infty}\kappa(t)\neq0$ or $\lim\limits_{t\to+\infty}\kappa(t)$ does not exist,
then the zero solution of the system is stable; \\
$(2)$ if there exists an initial value $r(0)\in\mathbb{R}^2$, such that $\lim\limits_{t\to+\infty}\kappa(t)=+\infty$, then the zero solution of the system is asymptotically stable.
\end{prop}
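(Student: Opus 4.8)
The plan is to verify the proposition directly from the three tables that have just been compiled, treating it as a case-by-case bookkeeping exercise rather than a new argument. Since $A$ is assumed to be in real Jordan canonical form, Proposition \ref{Jordan2-2dim} tells us that $A$ falls into exactly one of the three families analyzed in Subsections \ref{section 2-dim (1)}, 4.2, and 4.3, so it suffices to show that the implications in (1) and (2) hold within each of Tables \ref{2-dim table 1}, \ref{2-dim table 2}, and \ref{2-dim table 3}. The subtlety to dispatch first is the hypothesis ``there exists an initial value $r(0)$''. The tables were computed under the standing assumption $x_0 y_0 \neq 0$, but the proposition quantifies over \emph{all} of $\mathbb{R}^2$. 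As noted in the text preceding the statement, if $x_0 y_0 = 0$ then $r(t)$ lies on a coordinate axis (or is constant), hence is a straight-line trajectory and $\kappa(t) \equiv 0$; so such an initial value can never witness either hypothesis $\lim \kappa(t) \neq 0$, $\lim \kappa(t)$ nonexistent, or $\lim \kappa(t) = +\infty$. Therefore any $r(0)$ realizing the hypotheses of (1) or (2) automatically satisfies $x_0 y_0 \neq 0$, and we may apply the tables verbatim.

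For part (1): suppose some $r(0)$ (necessarily with $x_0y_0\neq 0$) gives $\lim_{t\to+\infty}\kappa(t)\neq 0$ or this limit fails to exist. I would scan the three tables for every row whose curvature entry is \emph{not} ``$\kappa(t)\to 0$'' and not ``$\kappa(t)\equiv 0$''. In Table \ref{2-dim table 1} these are rows (2), (3), (5), (6), (7) (where $\kappa \equiv 0$ — but these are excluded since they do not match the hypothesis) and row (8) in the subcases $2\lambda_1 = \lambda_2$ (limit $C>0$) and $2\lambda_1 < \lambda_2$ (limit $+\infty$); in Table \ref{2-dim table 2}, rows (2) and (3); in Table \ref{2-dim table 3}, row (3). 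Reading off the ``Stable'' column in each of these rows, one checks it always reads ``Yes''. (The rows with $\kappa \equiv 0$, namely \ref{2-dim table 1}(2),(3),(7) etc., are simply not witnesses to the hypothesis and so impose no constraint.) Hence whenever the hypothesis of (1) is met, the zero solution is stable.

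For part (2): suppose some $r(0)$ gives $\lim_{t\to+\infty}\kappa(t)=+\infty$. The rows with curvature entry ``$\kappa(t)\to +\infty$'' are precisely Table \ref{2-dim table 1} row (8) in the subcase $2\lambda_1<\lambda_2$, Table \ref{2-dim table 2} row (3), and Table \ref{2-dim table 3} row (3); in all three the ``Asymptotically Stable'' column reads ``Yes'', giving the conclusion. The only genuine work, then, is confirming that the curvature entries in the tables are correct — but those follow from the explicit formulas for $\kappa^2(t)$ displayed in each subsection (with the detailed computations deferred to Appendix \ref{cal 2-dim}) by elementary asymptotic analysis of ratios of exponentials and polynomials, together with Proposition \ref{asy.stable} to fill in the stability columns from the eigenvalue data. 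The main obstacle, such as it is, is purely organizational: making sure the ``there exists an initial value'' quantifier is correctly reconciled with the $x_0y_0\neq 0$ normalization and that no table row is overlooked in the scan; there is no deep idea beyond the case analysis already encoded in the tables.
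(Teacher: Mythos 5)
Your proposal is correct and follows essentially the same route as the paper: the paper likewise disposes of initial values with $x_0y_0=0$ by asserting they force $\lim_{t\to+\infty}\kappa(t)=0$, and then reads the two implications directly off Tables \ref{2-dim table 1}, \ref{2-dim table 2}, and \ref{2-dim table 3}, exactly as in your scan. One caveat (shared with the paper's own wording): the claim that $x_0y_0=0$ yields a straight-line trajectory with $\kappa(t)\equiv 0$ is false in Case 2 --- for $A=\left(\begin{smallmatrix}a&b\\-b&a\end{smallmatrix}\right)$ and $r(0)=(1,0)^{\mathrm T}$ the trajectory is still a spiral with $\kappa^2(t)=b^2/\bigl((a^2+b^2)(x_0^2+y_0^2)\mathrm{e}^{2at}\bigr)$ --- but this is harmless, since that formula depends only on $x_0^2+y_0^2$ (and the Case 3 formula degenerates only when $y_0=0$), so the table entries, and hence the conclusion, remain valid for every nonzero initial value.
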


Combined with Theorem \ref{thm k1 2-dim}, we complete the proof of Theorem \ref{thm 2-dim}.
\par
\begin{rem}
If all eigenvalues of $A$ are real numbers, namely, matrix $A$ is similar to one of the two cases $(1)$ and $(3)$ in Proposition \ref{Jordan2-2dim}. According to Table \ref{2-dim table 1} and \ref{2-dim table 3}, combined with Theorem \ref{thm k1 2-dim}, we obtain the following proposition.
\end{rem}

\begin{prop}
Under the assumptions of Theorem \ref{thm 2-dim}, together with the assumption that all eigenvalues of $A$ are real numbers. If there exists an initial value $r(0)\in\mathbb{R}^2$, such that$\lim\limits_{t\to+\infty}\kappa(t)\neq0$ or $\lim\limits_{t\to+\infty}\kappa(t)$ does not exist,
then the zero solution of the system is asymptotically stable.
\end{prop}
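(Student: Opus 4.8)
The plan is to reduce the claim to the real Jordan canonical form and then read the conclusion off the case tables of Section \ref{Section 2-dim}. Since all eigenvalues of $A$ are assumed real, Proposition \ref{Jordan2-2dim} shows that $A$ is similar to a matrix $B$ that is either the real diagonal matrix of Case $1$ or the Jordan block $\begin{pmatrix}\lambda&1\\0&\lambda\end{pmatrix}$ of Case $3$; Case $2$ is excluded because its eigenvalues $a\pm\sqrt{-1}\,b$ with $b>0$ are non-real. So I would write $A=P^{-1}BP$ with $P$ real and invertible, set $v(t)=Pr(t)$, and recall that this is an equivalence transformation, so $\dot v(t)=Bv(t)$, $v(0)=Pr(0)$, and the zero solution of $\dot r=Ar$ is asymptotically stable if and only if that of $\dot v=Bv$ is.

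Next I would transport the hypothesis along this transformation. Given $r(0)$ with $\lim_{t\to+\infty}\kappa_r(t)\neq 0$ or with that limit nonexistent, the equivalence $\lim\kappa_v(t)=0\iff\lim\kappa_r(t)=0$ from Theorem \ref{thm k1 2-dim}, read contrapositively, gives that the trajectory $v(t)=\mathrm{e}^{tB}v(0)$ satisfies $\lim_{t\to+\infty}\kappa_v(t)\neq 0$ or has that limit nonexistent. In particular $\kappa_v$ is not eventually identically $0$. For the canonical forms of Cases $1$ and $3$, an initial value $(x_0,y_0)^{\mathrm T}$ with $x_0y_0=0$ produces a straight-line trajectory and hence $\kappa_v\equiv 0$, so $v(0)$ must satisfy $x_0y_0\neq 0$ --- which is exactly the standing hypothesis under which the curvature formulas and Tables \ref{2-dim table 1} and \ref{2-dim table 3} were obtained.

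The final step is to inspect those two tables. From the explicit curvature formulas one sees that for the canonical forms of Cases $1$ and $3$ the limit $\lim_{t\to+\infty}\kappa_v(t)$ always exists, equalling $0$, a positive constant, or $+\infty$; so the ``nonexistent'' alternative cannot occur for $v$, and we are reduced to $\lim\kappa_v(t)=C\in(0,+\infty)$ or $\lim\kappa_v(t)=+\infty$. In Table \ref{2-dim table 1} the only rows with $\kappa_v(t)\not\to 0$ are the sub-cases of row $(8)$ with $2\lambda_1\leqslant\lambda_2$, and in Table \ref{2-dim table 3} the only such row is row $(3)$ with $\lambda<0$; in each of these the zero solution is asymptotically stable. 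Hence $\dot v=Bv$, and therefore $\dot r=Ar$, has asymptotically stable zero solution.

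The argument is essentially bookkeeping on top of machinery already assembled, so I expect no deep obstacle; the two points that need care --- the would-be ``hard part'' --- are (i) that the qualitative curvature behaviour survives the equivalence transformation, which is precisely Theorem \ref{thm k1 2-dim}, and (ii) that the tables, derived under the blanket assumption $x_0y_0\neq 0$, legitimately apply to the transformed initial value $v(0)=Pr(0)$, which holds since an initial value with a vanishing coordinate would force $\kappa_v\equiv 0$, contradicting the transported hypothesis.
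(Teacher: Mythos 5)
Your argument is correct and follows essentially the same route as the paper: since all eigenvalues are real, $A$ is similar to form $(1)$ or $(3)$ of Proposition \ref{Jordan2-2dim}, the curvature behaviour is transported by Theorem \ref{thm k1 2-dim}, and asymptotic stability is then read off from Tables \ref{2-dim table 1} and \ref{2-dim table 3}. The only cosmetic imprecision (one the paper's Section \ref{2-dim result} shares) is your claim that a vanishing coordinate of $v(0)$ forces a straight line with $\kappa_v\equiv 0$: in Case $3$ with $x_0=0\neq y_0$ the trajectory is not a line, but since $\lim_{t\to+\infty}\kappa_v(t)=0$ there whenever $\lambda\geqslant 0$, the transported hypothesis still forces $\lambda<0$ and the conclusion is unaffected.
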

\par
\begin{rem}
By the expressions of curvature in the above cases, and Table \ref{2-dim table 1}, \ref{2-dim table 2}, \ref{2-dim table 3}, we obtain the following theorem.
\end{rem}

\begin{thm}\label{2-dim initial}
Under the assumptions of Theorem \ref{thm 2-dim}, additionally assuming that $A$ is a matrix in real Jordan canonical form. For any given initial value $r(0)\in\mathbb{R}^2$, curvature $\kappa(t)$ of trajectory $r(t)$ of system (\ref{system}) is subject to one of the following three cases:
\begin{align*}
\lim\limits_{t\to+\infty}\kappa(t)=0, \qquad
\lim\limits_{t\to+\infty}\kappa(t)=+\infty, \qquad
\lim\limits_{t\to+\infty}\kappa(t)=C\ (0<C<+\infty).
\end{align*}
Moreover, if for some initial value $r(0)\in\mathbb{R}^2$ which satisfies $x_0y_0\neq0$, we have $\lim\limits_{t\to+\infty}\kappa(t)=0$ (or $+\infty$, or a constant $C>0$, respectively), then for an arbitrary $r(0)\in\mathbb{R}^2$ satisfying $x_0y_0\neq0$, we still have $\lim\limits_{t\to+\infty}\kappa(t)=0$ (or $+\infty$, or a constant $\tilde{C}>0$, respectively).
\end{thm}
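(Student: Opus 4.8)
The plan is to read the statement directly off the explicit curvature formulas already established for the three real Jordan forms of Proposition \ref{Jordan2-2dim}, supplemented by an elementary asymptotic analysis. Since $A$ is assumed to be in real Jordan canonical form, it equals one of the three matrices in Proposition \ref{Jordan2-2dim}, so it suffices to check, in each of Cases 1, 2 and 3, that (a) for every initial value $\kappa(t)$ tends to $0$, to $+\infty$, or to a positive constant, and (b) when $x_0y_0\neq 0$ the alternative that occurs is the same for all initial values. Throughout, the behavior $\kappa\equiv 0$ is counted under $\lim_{t\to+\infty}\kappa(t)=0$.

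I would begin with Case 2, which is immediate: from $\kappa^2(t)=b^2\big/\big[(a^2+b^2)(x_0^2+y_0^2)\mathrm{e}^{2at}\big]$ one gets $\lim\kappa(t)=0$ if $a>0$, $\lim\kappa(t)^2=1/(x_0^2+y_0^2)>0$ if $a=0$, and $\lim\kappa(t)=+\infty$ if $a<0$; the sign of $a$, hence the trichotomy, does not involve $r(0)$, and in the borderline subcase $a=0$ the limiting value depends on $r(0)$ only through the positive number $x_0^2+y_0^2$. For Case 1 I would argue as follows. If $\lambda_1=\lambda_2$ or $\lambda_1\lambda_2=0$, the factor $\lambda_1\lambda_2(\lambda_2-\lambda_1)$ in the numerator of $\kappa^2(t)$ vanishes, so $\kappa\equiv 0$. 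Otherwise one may assume $\lambda_1>\lambda_2$ with $\lambda_1,\lambda_2\neq 0$; since $x_0y_0\neq 0$, the bracket $(\lambda_1x_0)^2\mathrm{e}^{2\lambda_1 t}+(\lambda_2y_0)^2\mathrm{e}^{2\lambda_2 t}$ is asymptotically equivalent to its first term $(\lambda_1x_0)^2\mathrm{e}^{2\lambda_1 t}$, whence $\kappa^2(t)\sim c\,\mathrm{e}^{(2\lambda_2-4\lambda_1)t}$ with $c=c(r(0))>0$. Therefore the limit is $0$, a positive constant, or $+\infty$ according as $2\lambda_2-4\lambda_1<0$, $=0$, or $>0$, and which alternative holds is fixed by the eigenvalues; only in the middle alternative is the limiting value $r(0)$-dependent, and it is then still strictly positive. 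All of this is consistent with, and can be cross-checked against, Table \ref{2-dim table 1}.

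For Case 3 I would use $\kappa^2(t)=\lambda^4y_0^4\big/\big[g(t)\mathrm{e}^{2\lambda t}\big]$. When $\lambda=0$ the numerator vanishes and $\kappa\equiv 0$. When $\lambda\neq 0$, the crucial observation is that $g(t)$ is a polynomial of degree $6$ whose leading coefficient $\lambda^6y_0^6$ is strictly positive because $x_0y_0\neq 0$ forces $y_0\neq 0$; hence $g(t)\sim\lambda^6y_0^6t^6\to+\infty$ and the remaining coefficients $a_0,\dots,a_5$ — which do depend on $r(0)$ — do not influence the asymptotics. Consequently $g(t)\mathrm{e}^{2\lambda t}\to+\infty$ when $\lambda>0$, giving $\kappa^2(t)\to 0$, while when $\lambda<0$ the exponential decay dominates the polynomial growth, $g(t)\mathrm{e}^{2\lambda t}\to 0$, and $\kappa^2(t)\to+\infty$; again the outcome is governed solely by the sign of $\lambda$ (compare Table \ref{2-dim table 3}). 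Combining the three cases yields the trichotomy for arbitrary $r(0)$ and, under $x_0y_0\neq 0$, the asserted $r(0)$-independence of which alternative occurs. I do not anticipate a genuine obstacle here: the argument is case bookkeeping together with dominant-term extraction. The one point that really uses the hypothesis $x_0y_0\neq 0$ — and hence must be stated carefully — is the nonvanishing of the leading coefficients invoked in Cases 1 and 3 (namely $(\lambda_1x_0)^2$ and $\lambda^6y_0^6$), which is precisely what legitimizes replacing the exponential bracket and $g(t)$ by their leading terms and rules out any spurious cancellation among lower-order contributions.
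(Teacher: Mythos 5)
Your proposal is correct and follows essentially the same route as the paper, which establishes Theorem \ref{2-dim initial} by reading the trichotomy off the explicit curvature formulas for the three Jordan forms and the case analysis recorded in Tables \ref{2-dim table 1}, \ref{2-dim table 2}, and \ref{2-dim table 3}; your dominant-term asymptotics (e.g.\ $\kappa^2(t)\sim c\,\mathrm{e}^{(2\lambda_2-4\lambda_1)t}$ in Case 1) simply make explicit the computations behind those tables. The only cosmetic omission is the degenerate initial values ($x_0y_0=0$ or $r(0)=0$) in the first part of the statement, which the paper dispatches by noting $\kappa(t)\to 0$ there, so nothing substantive is missing.
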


\section{Three-Dimensional Systems} \label{SectionJordan}\setcounter{table}{0}

In the case of three-dimensional systems, Proposition \ref{Jordan1} becomes the following result.

\begin{prop}\label{Jordan2}
Let $A$ be a $3\times3$ real matrix. Then the matrix $A$ is similar to one of the following four cases:
\begin{align}
&(1)
\begin{pmatrix}\lambda_1&0&0\\0&\lambda_2&0\\0&0&\lambda_3\end{pmatrix}\
(\lambda_1, \lambda_2, \lambda_3\in\mathbb{R}),
\qquad
(2)
\begin{pmatrix}a&b&0\\-b&a&0\\0&0&\lambda_3\end{pmatrix}\
(a, b, \lambda_3\in\mathbb{R},\ b>0),
\nonumber\\
&(3)
\begin{pmatrix}\lambda_1&1&0\\0&\lambda_1&0\\0&0&\lambda_2\end{pmatrix}\
(\lambda_1, \lambda_2\in\mathbb{R}),
\phantom{,\lambda_3}\qquad
(4)
\begin{pmatrix}\lambda&1&0\\0&\lambda&1\\0&0&\lambda\end{pmatrix}\
(\lambda\in\mathbb{R}).
\nonumber
\end{align}
\end{prop}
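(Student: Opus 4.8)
The plan is to derive Proposition~\ref{Jordan2} directly from the general real Jordan canonical form of Proposition~\ref{Jordan1} by specializing to $n=3$ and enumerating the admissible block structures. First I would recall that, by Proposition~\ref{Jordan1}, $A$ is similar to a block-diagonal matrix whose diagonal blocks are of two types: blocks $C_{n_k}(a_k,b_k)$ of even size $2n_k$, attached to a conjugate pair of non-real eigenvalues $a_k\pm\sqrt{-1}\,b_k$ with $b_k>0$; and real Jordan blocks $J_{n_j}(\lambda_j)$ of size $n_j$, attached to real eigenvalues $\lambda_j$. The sizes of all the blocks must add up to $3$.

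Next I would split into cases according to how many $C$-type blocks occur. Since each $C$-block contributes an even dimension $2n_k\ge 2$ and $3$ is odd, there can be at most one such block, and if one is present it must have $n_k=1$, i.e. it equals $\Lambda_k=\begin{pmatrix}a&b\\-b&a\end{pmatrix}$ with $b>0$ (there is no superdiagonal $I_2$ when $n_k=1$); the remaining dimension $1$ is then a single block $J_1(\lambda_3)=(\lambda_3)$. This is exactly case~$(2)$.

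If no $C$-block occurs, $A$ is similar to a direct sum of real Jordan blocks whose sizes form a partition of $3$. The partitions of $3$ are $1+1+1$, $2+1$, and $3$. The partition $1+1+1$ gives three $1\times1$ blocks, i.e. the diagonal matrix $\mathrm{diag}(\lambda_1,\lambda_2,\lambda_3)$ with no constraint among the $\lambda_i$, which is case~$(1)$. The partition $2+1$ gives $J_2(\lambda_1)\oplus J_1(\lambda_2)=\begin{pmatrix}\lambda_1&1&0\\0&\lambda_1&0\\0&0&\lambda_2\end{pmatrix}$, which is case~$(3)$. The partition $3$ gives the single block $J_3(\lambda)$, which is case~$(4)$. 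Since these possibilities are mutually exhaustive, every $3\times3$ real matrix is similar to a matrix of one of the four listed forms.

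There is no genuine obstacle here; the proof is a finite enumeration. The only points requiring a little care are the bookkeeping that a size-$1$ complex block reduces to the bare rotation-type matrix $\Lambda_k$ with no off-diagonal identity, so that the structure in case~$(2)$ matches Proposition~\ref{Jordan1} verbatim, and the observation that reordering the diagonal blocks is realized by a permutation similarity, which lets us display the blocks in the order given in the statement.
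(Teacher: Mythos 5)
Your proposal is correct and matches the paper's intent exactly: the paper states Proposition~\ref{Jordan2} as the $n=3$ specialization of the real Jordan canonical form (Proposition~\ref{Jordan1}) without further argument, and your enumeration of block structures (one $2\times2$ complex block plus a $1\times1$ real block, or real Jordan blocks of sizes partitioning $3$ as $1+1+1$, $2+1$, or $3$) is precisely the bookkeeping that justifies it. Nothing is missing.
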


Note that we have Theorem \ref{thm k1} and \ref{thm k2} in Section \ref{SectionRelationship}. In order to prove Theorem \ref{thm1}, we need only consider the four cases in Proposition \ref{Jordan2}.
\par
We notice that the initial value $r(0)=(x_0,y_0,z_0)^\mathrm{T}$ may affect curvature $\kappa(t)$ and torsion $\tau(t)$ of curve $r(t)$.
For simplicity, in the calculations below, we always assume that $x_0y_0z_0\neq 0$. It will be seen later (Section \ref{section xyz=0} and \ref{3-dim result}) that this assumption does not affect the correctness of the proof of Theorem \ref{thm1}.
\par
We give the calculation details of curvatures and torsions in Appendix \ref{cal 3-dim}.

\subsection{Case 1} \label{section 3-dim (1)}

\

First, we consider the case of the coefficient matrix is a real diagonal matrix, namely,
\begin{align*}
A=\begin{pmatrix}\lambda_1&0&0\\0&\lambda_2&0\\0&0&\lambda_3\end{pmatrix}\
(\lambda_1, \lambda_2, \lambda_3\in\mathbb{R}).
\end{align*}
If $\lambda_1=\lambda_2=\lambda_3=0$, then $\kappa(t)\equiv0$ and $\tau(t)\equiv0$;
if $\lambda_1^2+\lambda_2^2+\lambda_3^2 \neq 0$, then the square of curvature $\kappa(t)$ of curve $r(t)$ is
\begin{align*}
\kappa^2(t)
=& \left\{
\left[\lambda_2 \lambda_3 (\lambda_3-\lambda_2) y_0 z_0\right]^2 \cdot \mathrm{e}^{2(\lambda_2+\lambda_3)t}
+\left[\lambda_1 \lambda_3 (\lambda_1-\lambda_3) x_0 z_0\right]^2 \cdot \mathrm{e}^{2(\lambda_1+\lambda_3)t}
\right.\nonumber\\& \left.
+\left[\lambda_1 \lambda_2 (\lambda_2-\lambda_1) x_0 y_0\right]^2 \cdot \mathrm{e}^{2(\lambda_1+\lambda_2)t}
\right\}\left.\middle/
{\left\{(\lambda_1 x_0)^2 \mathrm{e}^{2\lambda_1t}
+(\lambda_2 y_0)^2 \mathrm{e}^{2\lambda_2t}
+(\lambda_3 z_0)^2 \mathrm{e}^{2\lambda_3t}\right\}^3}
.\right.
\end{align*}
If $[\lambda_2 \lambda_3 (\lambda_3-\lambda_2) ]^2
+[\lambda_1 \lambda_3 (\lambda_1-\lambda_3) ]^2
+[\lambda_1 \lambda_2 (\lambda_2-\lambda_1) ]^2=0$,
then $\left\|\dot{r}(t)\times\ddot{r}(t)\right\|\equiv0$ and we have $\tau(t)\equiv0$;
if $\left\|\dot{r}(t)\times\ddot{r}(t)\right\|\neq0$, then torsion of curve $r(t)$ is
\begin{align*}
\tau(t)
=& \left.
{\lambda_1\lambda_2\lambda_3(\lambda_2-\lambda_1)(\lambda_3-\lambda_1)(\lambda_3-\lambda_2) x_0y_0z_0 \cdot
\mathrm{e}^{(\lambda_1+\lambda_2+\lambda_3)t}}
 \middle/
\Big\{\left[\lambda_2 \lambda_3 (\lambda_3-\lambda_2) y_0z_0\right]^2 \cdot \mathrm{e}^{2(\lambda_2+\lambda_3)t}
\right. \nonumber\\
&+\left[\lambda_1 \lambda_3 (\lambda_1-\lambda_3) x_0z_0\right]^2 \cdot \mathrm{e}^{2(\lambda_1+\lambda_3)t}
+\left[\lambda_1 \lambda_2 (\lambda_2-\lambda_1) x_0y_0\right]^2 \cdot \mathrm{e}^{2(\lambda_1+\lambda_2)t}
\Big\}. \nonumber\\
\end{align*}

Through the analysis of all situations of the eigenvalues, we have the following Table \ref{table 1}.
Without loss of generality, we suppose that $\lambda_1\geqslant\lambda_2\geqslant\lambda_3$.

\begin{table}[htbp]
\caption{Three-Dimensional Systems for Case 1}
$
\begin{footnotesize}
\begin{array}{|c|c|c|c|c|c|c|}
\hline
&\multirow{2}*{\text{Eigenvalues}} & \multirow{2}*{det$A$}
& {\text{Asymptotically}}  & \multirow{2}*{\text{Stable}} & \text{Curvature}\ \kappa(t) & \text{Torsion}\ \tau(t)\\
 &&& {\text{Stable}} &  &  (t\to+\infty) &  (t\to+\infty)\\ \hline
(1)& 0<\lambda_3<\lambda_2<\lambda_1 &\neq0& {\text{No}} & {\text{No}} & \kappa(t)\to 0 &  \tau(t)\to 0\\ \hline
(2)& 0<\lambda_3<\lambda_2=\lambda_1 &\neq0& {\text{No}} & {\text{No}} & \kappa(t)\to 0 &  \tau(t)\equiv 0\\ \hline
(3)& 0<\lambda_3=\lambda_2<\lambda_1 &\neq0& {\text{No}} & {\text{No}} & \kappa(t)\to 0 &  \tau(t)\equiv 0\\ \hline
(4)& 0<\lambda_3=\lambda_2=\lambda_1 &\neq0& {\text{No}} & {\text{No}} & \kappa(t)\equiv 0 &  \tau(t)\equiv 0\\ \hline
(5)& 0=\lambda_3<\lambda_2<\lambda_1 &0& {\text{No}} & {\text{No}} & \kappa(t)\to 0 &  \tau(t)\equiv 0\\ \hline
(6)& 0=\lambda_3<\lambda_2=\lambda_1 &0& {\text{No}} & {\text{No}} & \kappa(t)\equiv 0 &  \tau(t)\equiv 0\\ \hline
(7)& \lambda_3<0<\lambda_2<\lambda_1 &\neq0& {\text{No}} & {\text{No}} & \kappa(t)\to 0 &  \tau(t)\to 0\\ \hline
(8)& \lambda_3<0<\lambda_2=\lambda_1 &\neq0& {\text{No}} & {\text{No}} & \kappa(t)\to 0 &  \tau(t)\equiv 0\\ \hline
(9)& 0=\lambda_3=\lambda_2<\lambda_1 &0& {\text{No}} & {\text{No}} & \kappa(t)\equiv 0 &  \tau(t)\equiv 0\\ \hline
(10)& \lambda_3<0=\lambda_2<\lambda_1 &0& {\text{No}} & {\text{No}} & \kappa(t)\to 0 &  \tau(t)\equiv 0\\ \hline
(11)& \lambda_3<\lambda_2<0<\lambda_1 &\neq0& {\text{No}} & {\text{No}} & \kappa(t)\to 0 &  \tau(t)\to 0\\ \hline
(12)& \lambda_3=\lambda_2<0<\lambda_1 &\neq0& {\text{No}} & {\text{No}} & \kappa(t)\to 0 &  \tau(t)\equiv 0\\ \hline
(13)& 0=\lambda_3=\lambda_2=\lambda_1 &0& {\text{No}} & {\text{Yes}} & \kappa(t)\equiv 0 &  \tau(t)\equiv 0\\ \hline
(14)& \lambda_3<0=\lambda_2=\lambda_1 &0& {\text{No}} & {\text{Yes}} & \kappa(t)\equiv 0 &  \tau(t)\equiv 0\\ \hline
(15)& \lambda_3<\lambda_2<0=\lambda_1 &0& {\text{No}} & {\text{Yes}} &
\kappa(t)\to\left\{
\begin{aligned}
&0, &2\lambda_2>\lambda_3\\
&C, &2\lambda_2=\lambda_3\\
&+\infty, &2\lambda_2<\lambda_3
\end{aligned}\right.
&  \tau(t)\equiv 0\\ \hline
(16)& \lambda_3=\lambda_2<0=\lambda_1 &0& {\text{No}} & {\text{Yes}} & \kappa(t)\equiv 0 &  \tau(t)\equiv 0\\ \hline
(17)& \lambda_3<\lambda_2<\lambda_1<0 &\neq0& {\text{Yes}} & {\text{Yes}} &
\kappa(t)\to\left\{
\begin{aligned}
&0, &2\lambda_1>\lambda_2\\
&C, &2\lambda_1=\lambda_2\\
&+\infty, &2\lambda_1<\lambda_2
\end{aligned}\right.
 &
\tau(t)\to\left\{
\begin{aligned}
&0, &\lambda_1+\lambda_2-\lambda_3>0\\
&C, &\lambda_1+\lambda_2-\lambda_3=0\\
&\infty, &\lambda_1+\lambda_2-\lambda_3<0
\end{aligned}\right.\\ \hline
(18)& \lambda_3<\lambda_2=\lambda_1<0 &\neq0& {\text{Yes}} & {\text{Yes}} &
\kappa(t)\to\left\{
\begin{aligned}
&0, &2\lambda_1>\lambda_3\\
&C, &2\lambda_1=\lambda_3\\
&+\infty, &2\lambda_1<\lambda_3
\end{aligned}\right.
 &  \tau(t)\equiv 0\\ \hline
(19)& \lambda_3=\lambda_2<\lambda_1<0 &\neq0& {\text{Yes}} & {\text{Yes}} &
\kappa(t)\to\left\{
\begin{aligned}
&0, &2\lambda_1>\lambda_2\\
&C, &2\lambda_1=\lambda_2\\
&+\infty, &2\lambda_1<\lambda_2
\end{aligned}\right.
 &  \tau(t)\equiv 0\\ \hline
(20)& \lambda_3=\lambda_2=\lambda_1<0 &\neq0& {\text{Yes}} & {\text{Yes}} & \kappa(t)\equiv 0 &  \tau(t)\equiv 0\\ \hline
\end{array}
\end{footnotesize}
$
\label{table 1}
\end{table}

\par In Table \ref{table 1}, $C$ in the column of torsion denotes a non-zero constant, and each $C$ in the column of curvature denotes a positive constant, respectively. The value of each $C$ depends on the initial value $r(0)$.
\par
We see that if $\lim\limits_{t\to+\infty}\kappa(t)=+\infty$ or $C\ (0<C<+\infty)$, then the zero solution of the system is stable;
if $\mathrm{det}\, A\neq0$ and $\lim\limits_{t\to+\infty}\kappa(t)=+\infty$ or $C\ (0<C<+\infty)$, then the zero solution of the system is asymptotically stable;
if $\lim\limits_{t\to+\infty}\tau(t)=\infty$ or $C\ (C\neq 0)$, then the zero solution of the system is asymptotically stable.
\begin{rem}
By the expression of curvature, for any given coefficient matrix $A$ of Case $1$, if for some initial value $r(0)\in\mathbb{R}^3$ that satisfies $x_0y_0z_0\neq0$, we have $\lim\limits_{t\to+\infty}\kappa(t)=0$ (or $\infty$, or a constant $C>0$, respectively), then for an arbitrary $r(0)\in\mathbb{R}^3$ that satisfies $x_0y_0z_0\neq0$, we still have $\lim\limits_{t\to+\infty}\kappa(t)=0$ (or $\infty$, or a constant $\tilde{C}>0$, respectively). There is a similar result for torsion $\tau(t)$. In fact, we have Theorem \ref{3-dim initial} and Corollary \ref{3-dim initial cor} in Section \ref{3-dim result}.
\end{rem}
By using the similar ways, we can give the results for Case $2$, $3$ and $4$.

\subsection{Case 2}

\

In the case of
\begin{align*}
A=
\begin{pmatrix}a&b&0\\-b&a&0\\0&0&\lambda_3\end{pmatrix}\
(a, b, \lambda_3\in\mathbb{R},\ b>0),
\end{align*}
the square of curvature $\kappa(t)$ of curve $r(t)$ is
\begin{align*}
\kappa^2(t)
= \frac{ (a^2 + b^2)(x_0^2 + y_0^2) \cdot
  \{ \lambda_3^2 [(a-\lambda_3)^2+b^2]z_0^2 \cdot \mathrm{e}^{2(a+\lambda_3)t}
  + b^2(a^2 + b^2)(x_0^2 + y_0^2) \cdot \mathrm{e}^{4at} \} }
 { \{(a^2 + b^2)(x_0^2 + y_0^2) \cdot \mathrm{e}^{2at}
 +(\lambda_3 z_0)^2 \cdot \mathrm{e}^{2\lambda_3 t} \}^3 },
\end{align*}
and torsion of curve $r(t)$ is
\begin{align*}
\tau(t)
=\frac{ -b\lambda_3\{(a-\lambda_3)^2+b^2\} z_0 }
{ \lambda_3^2 \{(a-\lambda_3)^2+b^2\} z_0^2 \cdot \mathrm{e}^{\lambda_3 t}
  + b^2(a^2 + b^2)(x_0^2+y_0^2) \cdot \mathrm{e}^{(2a-\lambda_3)t} }.
\end{align*}

Through the analysis of all situations of the eigenvalues, we have the following Table \ref{table 2}.

\begin{table}[htbp]
\caption{Three-Dimensional Systems for Case 2}
$
\begin{footnotesize}
\begin{array}{|c|c|c|c|c|c|c|}
\hline
&\multirow{2}*{\text{Eigenvalues}} & \multirow{2}*{det$A$}
& {\text{Asymptotically}}  & \multirow{2}*{\text{Stable}} & \text{Curvature}\ \kappa(t) & \text{Torsion}\ \tau(t)\\
 &&& {\text{Stable}} &  &  (t\to+\infty) &  (t\to+\infty)\\ \hline
(1)& \lambda_3>0,\ a>0 &\neq0& {\text{No}} & {\text{No}} & \kappa(t)\to 0 & \tau(t)\to 0\\ \hline
(2)& \lambda_3>0,\ a=0 &\neq0& {\text{No}} & {\text{No}} & \kappa(t)\to 0 & \tau(t)\to 0\\ \hline
(3)& \lambda_3>0,\ a<0 &\neq0& {\text{No}} & {\text{No}} & \kappa(t)\to 0 & \tau(t)\to 0\\ \hline
(4)& \lambda_3=0,\ a>0  &0& {\text{No}} & {\text{No}} & \kappa(t)\to 0 & \tau(t)\equiv 0\\ \hline
(5)& \lambda_3=0,\ a=0  &0& {\text{No}} & {\text{Yes}} & \kappa(t)\equiv C & \tau(t)\equiv 0\\ \hline
(6)& \lambda_3=0,\ a<0  &0& {\text{No}} & {\text{Yes}} & \kappa(t)\to +\infty & \tau(t)\equiv 0\\ \hline
(7)& \lambda_3<0,\ a>0 &\neq0& {\text{No}} & {\text{No}} & \kappa(t)\to 0 & \tau(t)\to 0\\ \hline
(8)& \lambda_3<0,\ a=0 &\neq0& {\text{No}} & {\text{Yes}} & \kappa(t)\to C & \tau(t)\to 0\\ \hline
(9)& \lambda_3<0,\ a<0 &\neq0& {\text{Yes}} & {\text{Yes}} &
\kappa(t)\left\{
\begin{aligned}
&\to 0, &2\lambda_3>a,\\
&\equiv C, &2\lambda_3=a,\\
&\to +\infty, &2\lambda_3<a.
\end{aligned}\right.
&
\tau(t)\to\left\{
\begin{aligned}
&0, &2a>\lambda_3,\\
&C, &2a=\lambda_3,\\
&\infty, &2a<\lambda_3.
\end{aligned}\right.
\\ \hline
\end{array}
\end{footnotesize}
$
\label{table 2}
\end{table}

\par In Table \ref{table 2}, $C$ in the column of torsion denotes a non-zero constant, and each $C$ in the column of curvature denotes a positive constant, respectively. The value of each $C$ depends on the initial value $r(0)$.
\par
We see that if $\lim\limits_{t\to+\infty}\kappa(t)=+\infty$ or $C\ (0<C<+\infty)$, then the zero solution of the system is stable;
if $\mathrm{det}\, A\neq0$ and $\lim\limits_{t\to+\infty}\kappa(t)=+\infty$, then the zero solution of the system is asymptotically stable;
if $\lim\limits_{t\to+\infty}\tau(t)=\infty$ or $C\ (C\neq 0)$, then the zero solution of the system is asymptotically stable.

\subsection{Case 3}

\

In the case of
\begin{align*}
A=\begin{pmatrix}\lambda_1&1&0\\0&\lambda_1&0\\0&0&\lambda_2\end{pmatrix}\
(\lambda_1, \lambda_2\in\mathbb{R}),
\end{align*}
the square of curvature $\kappa(t)$ of curve $r(t)$ is
\begin{align*}
\kappa^2(t)
=&\Big\{
\big[\lambda_1 \lambda_2 (\lambda_2-\lambda_1) y_0 z_0 \big]^2 \cdot \mathrm{e}^{2(\lambda_1+\lambda_2)t}
\nonumber
\\&
+(\lambda_2 z_0)^2 \big[\lambda_1(\lambda_1 -\lambda_2 )y_0 \cdot t + (\lambda_1^2 x_0 - \lambda_1\lambda_2 x_0 + 2\lambda_1 y_0 -\lambda_2 y_0) \big] ^2 \cdot \mathrm{e}^{2(\lambda_1+\lambda_2)t}
\nonumber
\\&
+(\lambda_1^2 y_0^2)^2 \cdot \mathrm{e}^{4\lambda_1t}
\Big\}
\left.\middle/
{\Big\{ \big[ \lambda_1 y_0 \cdot t + ( \lambda_1 x_0 + y_0 ) \big]^2 \cdot \mathrm{e}^{2\lambda_1t}
+(\lambda_1 y_0)^2 \cdot \mathrm{e}^{2\lambda_1t}
+(\lambda_2 z_0)^2 \cdot \mathrm{e}^{2\lambda_2t}\Big\}^3}
\right..
\end{align*}

If $\lambda_1=\lambda_2=0$, then $\left\|\dot{r}(t)\times\ddot{r}(t)\right\|\equiv0$ and every trajectory $r(t)$ is a straight line, therefore we have $\tau(t)\equiv0$.
If $\lambda_1^2+\lambda_2^2 \neq 0$, then torsion of curve $r(t)$ is
\begin{align*}
\tau(t)
=&\left. { -\lambda_1^2 \lambda_2 (\lambda_1-\lambda_2)^2 y_0^2 z_0 \cdot \mathrm{e}^{(2\lambda_1+\lambda_2)t} }
\middle/\right.
\Big\{
\big[\lambda_1 \lambda_2 (\lambda_2-\lambda_1) y_0 z_0 \big]^2 \cdot \mathrm{e}^{2(\lambda_1+\lambda_2)t}
\nonumber
\\&
+(\lambda_2 z_0)^2 \big[\lambda_1(\lambda_1 -\lambda_2 )y_0 \cdot t + (\lambda_1^2 x_0 - \lambda_1\lambda_2 x_0 + 2\lambda_1 y_0 -\lambda_2 y_0) \big] ^2 \cdot \mathrm{e}^{2(\lambda_1+\lambda_2)t}
\nonumber
\\&
+(\lambda_1^2 y_0^2)^2 \cdot \mathrm{e}^{4\lambda_1t}
\Big\}.
\end{align*}

Through the analysis of all situations of the eigenvalues, we have the following Table \ref{table 3}.

\begin{table}[htbp]
\caption{Three-Dimensional Systems for Case 3}
$
\begin{footnotesize}
\begin{array}{|c|c|c|c|c|c|c|}
\hline
 &\multirow{2}*{\text{Eigenvalues}} & \multirow{2}*{det$A$}
& {\text{Asymptotically}}  & \multirow{2}*{\text{Stable}} & \text{Curvature}\ \kappa(t) & \text{Torsion}\ \tau(t)\\
 &&& {\text{Stable}} &  &  (t\to+\infty) &  (t\to+\infty)\\ \hline
(1)& \lambda_1, \lambda_2>0, \lambda_1\neq\lambda_2 &\neq0& {\text{No}} &{\text{No}} & \kappa(t)\to 0 & \tau(t)\to 0\\ \hline
(2)& \lambda_1=\lambda_2>0 &\neq0& {\text{No}} &{\text{No}} & \kappa(t)\to 0 & \tau(t)\equiv 0\\ \hline
(3)& \lambda_1=\lambda_2=0 &0& {\text{No}} & {\text{No}} & \kappa(t)\equiv 0 & \tau(t)\equiv 0\\ \hline
(4)& \lambda_1=0,\ \lambda_2\neq0 &0& {\text{No}} & {\text{No}} & \kappa(t)\to 0 & \tau(t)\equiv 0\\ \hline
(5)& \lambda_2=0,\ \lambda_1>0 &0& {\text{No}} & {\text{No}} & \kappa(t)\to 0 & \tau(t)\equiv 0\\ \hline
(6)& \lambda_2=0,\ \lambda_1<0 &0& {\text{No}} & {\text{Yes}} & \kappa(t)\to +\infty & \tau(t)\equiv 0\\ \hline
(7)& \lambda_1\lambda_2<0 &\neq0& {\text{No}} & {\text{No}} & \kappa(t)\to 0 & \tau(t)\to 0\\ \hline
(8)& \lambda_1=\lambda_2<0 &\neq0& {\text{Yes}} & {\text{Yes}} & \kappa(t)\to +\infty & \tau(t)\equiv 0\\ \hline
(9)& \lambda_1, \lambda_2<0,\ \lambda_1\neq\lambda_2 &\neq0& {\text{Yes}} & {\text{Yes}} &
\kappa(t)\to\left\{
\begin{aligned}
&0, &2\lambda_2\geqslant\lambda_1,\\
&+\infty, &2\lambda_2<\lambda_1.
\end{aligned}\right.
&
\tau(t)\to\left\{
\begin{aligned}
&0, &2\lambda_1>\lambda_2,\\
&C, &2\lambda_1=\lambda_2,\\
&\infty, &2\lambda_1<\lambda_2.
\end{aligned}\right.
\\ \hline
\end{array}
\end{footnotesize}
$
\label{table 3}
\end{table}

\par In Table \ref{table 3}, $C$ denotes a non-zero constant, whose value depends on the initial value $r(0)$. We see that if $\lim\limits_{t\to+\infty}\kappa(t)\neq0$, then the zero solution of the system is stable;
if $\mathrm{det}\, A\neq0$ and $\lim\limits_{t\to+\infty}\kappa(t)\neq0$, then the zero solution of the system is asymptotically stable;
if $\lim\limits_{t\to+\infty}\tau(t)=\infty$ or $C(C\neq0)$, then the zero solution of the system is asymptotically stable.

\subsection{Case 4}

\

In the case of
\begin{align*}
A=\begin{pmatrix}\lambda&1&0\\0&\lambda&1\\0&0&\lambda\end{pmatrix}\
(\lambda\in\mathbb{R}).
\end{align*}
the square of curvature $\kappa(t)$, and torsion $\tau(t)$ of curve $r(t)$ are
\begin{align*}
\kappa^2(t)
=\frac{ f(t) }{ g(t) \cdot \mathrm{e}^{2\lambda t} },\qquad\qquad
\tau (t)
=\frac{ -\lambda^3 z_0^3 }{ f(t) \cdot \mathrm{e}^{\lambda t} },
\end{align*}
where $f(t)$ and $g(t)$ are polynomials in $t$.
If $\lambda\neq0$, then
$f(t)= \left( \frac14 \lambda^4 z_0^4 \right) \cdot t^4 + \sum_{i=0}^3 a_i t^i$ is a quartic polynomial in $t$,
and $g(t)= \left( \frac{1}{64} \lambda^6 z_0^6 \right) \cdot t^{12} + \sum_{i=0}^{11} b_i t^i$ is a polynomial of degree 12 in $t$;
if $\lambda=0$, then $f(t)=z_0^4$ is a constant, and $g(t)=z_0^6 t^6 + \sum_{i=0}^{5} b_i t^i$ is a polynomial of degree $6$ in $t$.
\par
Hence we have the following Table \ref{table 4}.

\begin{table}[htbp]
\caption{Three-Dimensional Systems for Case 4}
$
\begin{array}{|c|c|c|c|c|c|c|}
\hline
&\multirow{2}*{\text{Eigenvalues}} & \multirow{2}*{det$A$}
& {\text{Asymptotically}}  & \multirow{2}*{\text{Stable}} & \text{Curvature}\ \kappa(t) & \text{Torsion}\ \tau(t)\\
 &&& {\text{Stable}} &  &  (t\to+\infty) &  (t\to+\infty)\\ \hline
(1)& \lambda>0 &\neq0& {\text{No}} & {\text{No}} & \kappa(t)\to 0 & \tau(t)\to 0\\ \hline
(2)& \lambda=0 &0& {\text{No}} & {\text{No}} & \kappa(t)\to 0 & \tau(t)\equiv 0\\ \hline
(3)& \lambda<0 &\neq0& {\text{Yes}} & {\text{Yes}} & \kappa(t)\to+\infty & \tau(t)\to \infty\\ \hline
\end{array}
$
\label{table 4}
\end{table}

We see that if $\lim\limits_{t\to+\infty}\kappa(t)\neq0$ or $\lim\limits_{t\to+\infty}\tau(t)\neq0$, then the zero solution of the system is asymptotically stable.

\subsection{The Situation of $x_0y_0z_0=0$}\label{section xyz=0}

\

In the calculations above, we always assume that initial value $r(0)=(x_0,y_0,z_0)^\mathrm{T}$ satisfies $x_0y_0z_0\neq 0$. On the other hand, in the situation of $x_0y_0z_0=0$, the condition that zero is not the limit of the curvature $\kappa(t)$ (or torsion $\tau(t)$) of trajectory as $t\to +\infty$ does not ensure the stability of the zero solution of the system; the condition that $A$ is invertible and $\lim\limits_{t\to+\infty}\kappa(t)=+\infty$ does not ensure the asymptotic stability of the zero solution of the system.
\par
Nevertheless, the Lebesgue measure of the set $\{ r(0)=(x_0,y_0,z_0)^\mathrm{T}|x_0y_0z_0=0 \} \subseteq \mathbb{R}^3$ is zero. Thus the situation of $x_0y_0z_0=0$ does not affect the correctness of Theorem \ref{thm1} in the case of $A$ is a matrix in real Jordan canonical form.

\subsection{Results}\label{3-dim result}

\

We give the results in this subsection.

By Table \ref{table 1}, \ref{table 2}, \ref{table 3}, and \ref{table 4}, we have the following theorem.
\begin{thm}\label{thm Jordan}
Under the assumptions of Theorem \ref{thm1}, additionally assuming that $A$ is a matrix in real Jordan canonical form. For any given initial value $r(0)\in\{ (x_0,y_0,z_0)^\mathrm{T}\in\mathbb{R}^3|x_0y_0z_0\neq0 \}$, \\
$(1)$ if $\lim\limits_{t\to+\infty}\kappa(t)\neq0$ or $\lim\limits_{t\to+\infty}\kappa(t)$ does not exist,
then the zero solution of the system is stable; \\
$(2)$ if $A$ is invertible and $\lim\limits_{t\to+\infty}\kappa(t)=+\infty$, then the zero solution of the system is asymptotically stable; \\
$(3)$ if $\lim\limits_{t\to+\infty}\tau(t)\neq0$ or $\lim\limits_{t\to+\infty}\tau(t)$ does not exist,
then the zero solution of the system is asymptotically stable.
\end{thm}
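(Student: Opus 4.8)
The plan is to prove the three assertions by a direct, finite case analysis anchored on Proposition \ref{Jordan2}. Since $A$ is a $3\times3$ real matrix already in real Jordan canonical form, it is \emph{exactly} one of the four matrices (1)--(4) of Proposition \ref{Jordan2}. For each of these forms the curvature $\kappa(t)$ and torsion $\tau(t)$ of the trajectory $r(t)=\mathrm{e}^{tA}r(0)$ with $x_0y_0z_0\neq0$ have been computed (details in Appendix \ref{cal 3-dim}), and their limiting behaviour as $t\to+\infty$, alongside the stability and asymptotic stability of the zero solution (read off from Proposition \ref{asy.stable}), is tabulated in Tables \ref{table 1}--\ref{table 4}. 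Thus the proof amounts to verifying, row by row across the four tables, that the implications claimed in (1)--(3) hold.

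I would first record the harmless structural remark that, for $A$ in real Jordan form, each of $\kappa^2(t)$ and $\tau(t)$ is a quotient of finite sums of terms of the shape $c\,t^{k}\mathrm{e}^{\mu t}$, with nonnegative numerator and eventually positive denominator; comparing dominant terms shows $\lim_{t\to+\infty}\kappa(t)$ always exists in $[0,+\infty]$ and $\lim_{t\to+\infty}\tau(t)$ always exists in $[-\infty,+\infty]$. Hence the clause ``or the limit does not exist'' is vacuous here, and it suffices to treat the hypotheses as $\lim_{t\to+\infty}\kappa(t)\neq0$ in (1), $\lim_{t\to+\infty}\kappa(t)=+\infty$ in (2), and $\lim_{t\to+\infty}\tau(t)\neq0$ in (3). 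Each part is then proved by contraposition against the tables: for (1), every row whose ``Stable'' entry is ``No'' has curvature entry $\kappa(t)\to0$ or $\kappa(t)\equiv0$ (and every row with conditional curvature behaviour has ``Stable'' equal to ``Yes''), so $\lim\kappa(t)\neq0$ excludes all unstable rows; for (2), one restricts to rows with $\mathrm{det}\,A\neq0$ and checks that every such row with ``Asymptotically Stable'' equal to ``No'' has $\kappa(t)$ tending to $0$ or to a finite constant, so $\lim\kappa(t)=+\infty$ forces asymptotic stability; for (3), every row with ``Asymptotically Stable'' equal to ``No'' has $\tau(t)\to0$ or $\tau(t)\equiv0$, so $\lim\tau(t)\neq0$ forces asymptotic stability.

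The one genuinely delicate point — and the place to be careful — is the invertibility hypothesis in (2). The rows with $\mathrm{det}\,A=0$, for instance the third sub-case of row (15) in Table \ref{table 1} and row (6) of Tables \ref{table 2} and \ref{table 3}, can have $\kappa(t)\to+\infty$ while the zero solution is only stable, not asymptotically stable (a zero eigenvalue is present). So the condition $\mathrm{det}\,A\neq0$ is not a technical convenience but is dictated by the tables, and I would flag this explicitly; it is exactly what removes those rows from consideration in (2). Beyond this observation the remaining work is purely the table-by-table bookkeeping just described, which is routine once the curvature/torsion formulas and the eigenvalue discussions of the preceding subsections and the appendix are in hand; no further estimate is needed.
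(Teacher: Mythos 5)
Your proposal is correct and follows essentially the same route as the paper, whose proof of this theorem is precisely the row-by-row reading of Tables \ref{table 1}--\ref{table 4} built from the curvature and torsion formulas of Appendix \ref{cal 3-dim}; your added observations (that the limits always exist for Jordan-form coefficient matrices, consistent with Theorem \ref{3-dim initial}, and that the invertibility hypothesis in (2) is forced by rows such as (15) of Table \ref{table 1} and row (6) of Tables \ref{table 2} and \ref{table 3}) are accurate and match the paper's own remarks.
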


Noting that the Lebesgue measure of the set $\{ r(0)=(x_0,y_0,z_0)^\mathrm{T}|x_0y_0z_0=0 \} \subseteq \mathbb{R}^3$ is zero, we have shown that Theorem \ref{thm1} holds if the coefficient matrix $A$ of system (\ref{system1}) is one of the four kinds of real Jordan canonical forms above.

Furthermore, combined with Theorem \ref{thm k1} and \ref{thm k2}, we have the following corollary.
\begin{cor}\label{thm v=Pr}
Suppose that $\dot{r}(t)=Ar(t)$ and $\dot{v}(t)=Jv(t)$ are two linear time-invariant systems, where $A$ and $J$ are two $3\times3$ real matrices, $J$ is a matrix in real Jordan canonical form,
$A=P^{-1}JP$, and $v(t)=Pr(t)$. Let $\kappa(t)$ and $\tau(t)$ be curvature and torsion of trajectory of a solution $r(t)$, respectively. For an arbitrary initial value $r(0)\in\mathbb{R}^3$ that satisfies each coordinate in the vector $Pr(0)$ is non-zero, \\
$(1)$ if $\lim\limits_{t\to+\infty}\kappa(t)\neq0$ or $\lim\limits_{t\to+\infty}\kappa(t)$ does not exist,
then the zero solution of the system is stable; \\
$(2)$ if $A$ is invertible and $\lim\limits_{t\to+\infty}\kappa(t)=+\infty$, then the zero solution of the system is asymptotically stable; \\
$(3)$ if $\lim\limits_{t\to+\infty}\tau(t)\neq0$ or $\lim\limits_{t\to+\infty}\tau(t)$ does not exist,
then the zero solution of the system is asymptotically stable.
\end{cor}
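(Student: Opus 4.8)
The plan is to deduce this corollary purely formally by transporting its hypotheses along the equivalence transformation $v=Pr$ to the Jordan-form system $\dot v=Jv$, invoking Theorem \ref{thm Jordan} there, and then transporting the conclusion back. First I would set $v(0)=Pr(0)$ and note that the trajectory $v(t)=\mathrm{e}^{tJ}v(0)$ of the second system and the trajectory $r(t)=\mathrm{e}^{tA}r(0)$ of the first system satisfy $v(t)=Pr(t)$, so that with $A=P^{-1}JP$ the pair $(r,v)$ is precisely an instance of the equivalence transformation appearing in Theorems \ref{thm k1} and \ref{thm k2}. The standing assumption that every coordinate of the vector $Pr(0)$ is non-zero says exactly that $v(0)=(x_0,y_0,z_0)^{\mathrm T}$ with $x_0y_0z_0\neq 0$, which is the admissible class of initial values for Theorem \ref{thm Jordan} applied to $\dot v=Jv$.

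Next, for each of the three items I would translate the hypothesis on $\kappa(t)=\kappa_r(t)$ (resp. $\tau(t)=\tau_r(t)$) into the corresponding statement for $\kappa_v(t)$ (resp. $\tau_v(t)$) using the equivalences of Theorem \ref{thm k1} (resp. Theorem \ref{thm k2}). The key observation for items $(1)$ and $(3)$ is that the clause ``$\lim_{t\to+\infty}\kappa(t)\neq 0$ or $\lim_{t\to+\infty}\kappa(t)$ does not exist'' is exactly the logical negation of ``$\lim_{t\to+\infty}\kappa(t)=0$''; hence, by the first equivalence in Theorem \ref{thm k1}, it is equivalent to the negation of ``$\lim_{t\to+\infty}\kappa_v(t)=0$'', i.e. to the same clause with $\kappa_v$ in place of $\kappa$ (and likewise for torsion via the first equivalence of Theorem \ref{thm k2}). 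For item $(2)$, Theorem \ref{thm k1} gives $\lim_{t\to+\infty}\kappa_v(t)=+\infty\iff\lim_{t\to+\infty}\kappa_r(t)=+\infty$, while invertibility of $A$ is equivalent to invertibility of $J$ since $A$ and $J$ are similar.

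Then I would apply Theorem \ref{thm Jordan} to $\dot v=Jv$ with initial value $v(0)$: item $(1)$ yields that the zero solution of $\dot v=Jv$ is stable, item $(2)$ that it is asymptotically stable, and item $(3)$ that it is asymptotically stable. Finally, since $A$ is similar to $J$, the proposition on the similarity-invariance of (asymptotic) stability transfers each of these conclusions back to $\dot r=Ar$, which is the assertion of the corollary.

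I do not anticipate a genuine obstacle: everything reduces to citing Theorems \ref{thm Jordan}, \ref{thm k1}, \ref{thm k2} and the similarity-invariance of stability. The only point that needs care is the elementary logical bookkeeping that the hypotheses in $(1)$ and $(3)$ are precisely ``not $(\lim=0)$'', so that the ``limit $=0$'' equivalences in Theorems \ref{thm k1}--\ref{thm k2} are exactly what is required; the remaining cases $(2)$ use the ``limit $=+\infty$'' (resp. ``$=\infty$'') equivalences together with the trivial fact that similarity preserves invertibility.
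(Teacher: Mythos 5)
Your proposal is correct and matches the paper's own argument: the paper derives this corollary exactly by combining Theorem \ref{thm Jordan} with the transfer equivalences of Theorems \ref{thm k1} and \ref{thm k2}, using that the hypotheses in items $(1)$ and $(3)$ are the negations of ``$\lim=0$'' and that similarity preserves both invertibility and (asymptotic) stability. Your write-up simply makes explicit the bookkeeping the paper leaves implicit.
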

Noting that the Lebesgue measure of the set $\{ r(0)=P^{-1}v(0)|v(0)=(\tilde{x}_0, \tilde{y}_0, \tilde{z}_0)^\mathrm{T}, \mathrm{s.t.,}~\tilde{x}_0 \tilde{y}_0 \tilde{z}_0=0 \} \subseteq \mathbb{R}^3$ is zero, we complete the proof of Theorem \ref{thm1}.

\begin{rem}
If all eigenvalues of $A$ are real numbers, namely, matrix $A$ is similar to one of the three cases $(1)$, $(3)$, and $(4)$ in Proposition \ref{Jordan2}. According to Table \ref{table 1}, \ref{table 3}, and \ref{table 4}, combined with Theorem \ref{thm k1} and \ref{thm k2}, we obtain the following proposition.
\end{rem}

\begin{prop}\label{real eig}
Under the assumptions of Theorem \ref{thm1}, together with the assumption that $A$ is invertible, and all eigenvalues of $A$ are real numbers. If there exists a measurable set $E\subseteq \mathbb{R}^3$ whose Lebesgue measure is greater than $0$, such that for all $r(0)\in E$, $\lim\limits_{t\to+\infty}\kappa(t)\neq0$ or $\lim\limits_{t\to+\infty}\kappa(t)$ does not exist, then the zero solution of the system is asymptotically stable.
\end{prop}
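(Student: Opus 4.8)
The plan is to pass to the real Jordan canonical form of $A$ and then read off the conclusion from Tables \ref{table 1}, \ref{table 3}, and \ref{table 4}, using the similarity-invariance results of Section \ref{SectionRelationship}. Since every eigenvalue of $A$ is real, Proposition \ref{Jordan2} shows that $A$ is similar to a matrix $J$ of type $(1)$, $(3)$, or $(4)$: no block of the rotational type of Case $2$ (a $2\times2$ block $\begin{pmatrix} a & b \\ -b & a \end{pmatrix}$ with $b>0$) occurs. Write $A = P^{-1} J P$ with $P$ real and invertible, and let $v(t) = P r(t)$ be the corresponding equivalence transformation, so that the trajectory of $\dot r = A r$ through $r(0)$ maps to the trajectory of $\dot v = J v$ through $v(0) = P r(0)$. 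Since $\det J = \det A \neq 0$, the matrix $J$ is invertible; and because similar systems are simultaneously (asymptotically) stable, it suffices to prove that the zero solution of $\dot v = J v$ is asymptotically stable.

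The measure-theoretic step produces a convenient initial value inside $E$. As $P$ is a linear automorphism of $\mathbb{R}^3$, it carries measurable sets to measurable sets and scales Lebesgue measure by $|\det P| > 0$; hence $PE$ has positive Lebesgue measure. The exceptional set $N = \{ (\tilde x_0, \tilde y_0, \tilde z_0)^{\mathrm{T}} \in \mathbb{R}^3 : \tilde x_0 \tilde y_0 \tilde z_0 = 0 \}$ is a union of three coordinate planes, hence Lebesgue-null, so $PE \not\subseteq N$. Choose $r(0) \in E$ with $v(0) = P r(0) \notin N$, i.e.\ all three coordinates of $v(0)$ are nonzero. For this $r(0)$ the hypothesis gives $\lim_{t\to+\infty} \kappa(t) \neq 0$, or this limit does not exist; by Theorem \ref{thm k1} the same dichotomy holds for the curvature $\kappa_v(t)$ of the trajectory of $\dot v = J v$ with initial value $v(0)$.

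It remains to inspect the tables. The matrix $J$ is one of the real Jordan forms of Case $1$, $3$, or $4$, and $v(0)$ has all coordinates nonzero, so the behavior of $\kappa_v(t)$ as $t \to +\infty$ is recorded in Table \ref{table 1}, \ref{table 3}, or \ref{table 4}. In every row of these tables the limit $\lim_{t\to+\infty} \kappa_v(t)$ exists and equals $0$, a positive constant, or $+\infty$; therefore the alternative ``the limit does not exist'' is vacuous here, and the condition $\lim_{t\to+\infty} \kappa_v(t) \neq 0$ forces this limit to be a positive constant or $+\infty$. Running down the rows with $\det J \neq 0$ in Tables \ref{table 1}, \ref{table 3}, and \ref{table 4}, one checks that a positive-constant or infinite limit of $\kappa_v$ occurs only in rows whose ``Asymptotically Stable'' entry reads ``Yes''. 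Hence the zero solution of $\dot v = J v$, and therefore of $\dot r = A r$, is asymptotically stable.

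The content of the proposition beyond Theorem \ref{thm1}$(1)$ is precisely that the two extra hypotheses upgrade ``stable'' to ``asymptotically stable'', so the only real work is to see that they eliminate all the bad rows. Dropping complex eigenvalues discards Case $2$, in which a rotational block produces $\kappa_v(t) \equiv C > 0$ while the system is merely stable; requiring $\det A \neq 0$ discards the rows of Tables \ref{table 1} and \ref{table 3} containing a zero eigenvalue, where $\kappa_v(t)$ may tend to $+\infty$ or to a positive constant even though the system is not asymptotically stable. That these two restrictions remove every row in which ``$\lim \kappa_v \neq 0$'' fails to imply asymptotic stability is the one point that must be verified, and it is already contained in the case analysis compiled in the tables.
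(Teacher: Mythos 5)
Your proposal is correct and follows essentially the same route as the paper: reduce to the real Jordan form (necessarily Case 1, 3, or 4 since the eigenvalues are real), transfer the curvature behavior via Theorem \ref{thm k1}, use the null set $\{\tilde x_0\tilde y_0\tilde z_0=0\}$ to pick a good initial value in $E$, and read off asymptotic stability from the rows of Tables \ref{table 1}, \ref{table 3}, \ref{table 4} with $\det J\neq 0$. The paper only sketches this in a remark, and your verification of the relevant table rows is accurate.
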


\begin{rem}
By the expressions of curvature and torsion in the above cases, and Table \ref{table 1}, \ref{table 2}, \ref{table 3}, \ref{table 4}, we obtain the following theorem.
\end{rem}

\begin{thm}\label{3-dim initial}
Under the assumptions of Theorem \ref{thm1}, additionally assuming that $A$ is a matrix in real Jordan canonical form. For any given initial value $r(0)\in\mathbb{R}^3$, curvature $\kappa(t)$ of trajectory $r(t)$ of system (\ref{system}) is subject to one of the following three cases:
\begin{align*}
\lim\limits_{t\to+\infty}\kappa(t)=0, \qquad
\lim\limits_{t\to+\infty}\kappa(t)=+\infty, \qquad
\lim\limits_{t\to+\infty}\kappa(t)=C\ (0<C<+\infty);
\end{align*}
and torsion $\tau(t)$ of system (\ref{system}) is subject to one of the following three cases:
\begin{align*}
\lim\limits_{t\to+\infty}\tau(t)=0, \qquad
\lim\limits_{t\to+\infty}\tau(t)=\infty, \qquad \phantom{+}
\lim\limits_{t\to+\infty}\tau(t)=C\ (C\neq0). \phantom{<+\infty}
\end{align*}
Moreover, if for some initial value $r(0)\in\mathbb{R}^3$ which satisfies $x_0y_0z_0\neq0$, we have $\lim\limits_{t\to+\infty}\kappa(t)=0$ (or $+\infty$, or a constant $C>0$, respectively), then for an arbitrary $r(0)\in\mathbb{R}^3$ satisfying $x_0y_0z_0\neq0$, we still have $\lim\limits_{t\to+\infty}\kappa(t)=0$ (or $+\infty$, or a constant $\tilde{C}>0$, respectively). There is a similar result for torsion $\tau(t)$.
\end{thm}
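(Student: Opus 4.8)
The plan is to deduce both parts of Theorem~\ref{3-dim initial} from one elementary fact about quotients of exponential polynomials, applied to the closed-form expressions for $\kappa^{2}(t)$ and $\tau(t)$ displayed in Section~\ref{SectionJordan} (and derived in Appendix~\ref{cal 3-dim}). The starting point is the observation that, in each of the four real Jordan forms of Proposition~\ref{Jordan2} --- and even though in Case~2 the trajectory itself contains trigonometric factors $\cos bt$, $\sin bt$ --- those factors cancel after simplification, so that in every case $\kappa^{2}(t)$ and $\tau(t)$ are \emph{quotients of exponential polynomials}, i.e.\ of finite $\mathbb R$-linear combinations of functions $t^{k}\mathrm e^{\mu t}$ (a set closed under products, so that the cubed norms in the denominators cause no trouble).

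First I would record the following lemma. If $h(t)=\sum_{k=1}^{m}p_{k}(t)\,\mathrm e^{\mu_{k}t}$ with the $\mu_{k}$ distinct, the $p_{k}\in\mathbb R[t]$, and $h\not\equiv0$, let $\mu$ be the largest $\mu_{k}$ with $p_{k}\not\equiv0$ and let $c\,t^{d}$ be the leading term of that $p_{k}$; then $h(t)\sim c\,t^{d}\mathrm e^{\mu t}$ as $t\to+\infty$, and in particular $h$ has constant sign near $+\infty$. It follows that if $N$ and $D$ are exponential polynomials and $D$ is eventually nonzero, then $\lim_{t\to+\infty}N(t)/D(t)$ exists in $[-\infty,+\infty]$ and equals $0$, $\pm\infty$, or the nonzero number $c_{N}/c_{D}$, according as the pair $(\mu_{N},d_{N})$ is smaller than, larger than, or equal to $(\mu_{D},d_{D})$ in lexicographic order; this is proved by dividing $N$ and $D$ through by $t^{d_{D}}\mathrm e^{\mu_{D}t}$.

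Next I would run through the four cases of Proposition~\ref{Jordan2}. For an initial value with $x_{0}y_{0}z_{0}\ne0$ the denominators of $\kappa^{2}$ and $\tau$ are $\|\dot r\|^{6}$ and $\|\dot r\times\ddot r\|^{2}$, which are eventually nonzero exactly when $\kappa$, resp.\ $\tau$, is defined (namely when $Ar(0)\ne0$ and the trajectory is not a straight line); hence the lemma applies. Since $\kappa=\sqrt{\kappa^{2}}\ge0$, this immediately gives that $\lim_{t\to+\infty}\kappa(t)$ lies in $\{0\}\cup(0,+\infty)\cup\{+\infty\}$ and that $\lim_{t\to+\infty}\tau(t)$ is $0$, a nonzero real, or $\pm\infty$; the individual entries of Tables~\ref{table 1}--\ref{table 4} are then just the lexicographic comparisons of $(\mu_{N},d_{N})$ with $(\mu_{D},d_{D})$ read off case by case. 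When $x_{0}y_{0}z_{0}=0$ the trajectory lies in an invariant coordinate subspace and is either a point or a line ($\kappa\equiv0$; $\tau$ undefined, or $0$ by the paper's convention) or a planar curve already treated in Section~\ref{Section 2-dim}, where the same trichotomy holds by Tables~\ref{2-dim table 1}--\ref{2-dim table 3}. This establishes the first assertion of the theorem for every $r(0)\in\mathbb R^{3}$.

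For the ``moreover'' statement the point is that in each canonical form every coefficient occurring in $N$ and in $D$ factors as an expression in the eigenvalues of $A$ times a function of $(x_{0},y_{0},z_{0})$ that is nonzero whenever $x_{0}y_{0}z_{0}\ne0$; furthermore the coefficients in $N$ are sums of squares, so no cancellation of dominant terms occurs. Hence $(\mu_{N},d_{N})$ and $(\mu_{D},d_{D})$ --- which alone decide which of the three alternatives holds --- do not depend on which admissible $r(0)$ is chosen; only the value $c_{N}/c_{D}$ of the limit in the finite nonzero case can change (and, for $\tau$, also its sign). The only step that requires real care is Case~3 (and, similarly, Case~4), where the coefficient of a given exponential is itself a polynomial in $t$ whose degree could a priori jump with $r(0)$: here I would verify, as in the discussion following Table~\ref{table 3}, that that degree is governed solely by whether certain eigenvalue expressions --- e.g.\ $\lambda_{1}(\lambda_{1}-\lambda_{2})$ --- vanish, independently of the particular nonzero values of $x_{0},y_{0},z_{0}$. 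This is the one genuinely delicate point; the rest is bookkeeping against Tables~\ref{table 1}--\ref{table 4}, and assembling the four cases gives Theorem~\ref{3-dim initial}.
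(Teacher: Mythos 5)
Your proposal follows essentially the same route as the paper: the paper's entire argument for Theorem~\ref{3-dim initial} is to read the trichotomy off the closed-form expressions for $\kappa^2(t)$ and $\tau(t)$ in the four Jordan cases and Tables~\ref{table 1}--\ref{table 4}, and your exponential-polynomial lemma simply makes precise why those quotients always have a limit in $\{0\}\cup(0,+\infty)\cup\{+\infty\}$ (resp.\ $\{0\}\cup(\mathbb{R}\setminus\{0\})\cup\{\infty\}$) and why the deciding data $(\mu_N,d_N)$, $(\mu_D,d_D)$ are independent of the admissible initial value. That is a genuine improvement in rigor over the paper, which offers no more than a pointer to the tables.

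One detail needs correcting: your claim that when $x_0y_0z_0=0$ the trajectory ``lies in an invariant coordinate subspace and is a point, a line, or a planar curve'' fails in Cases~3 and~4. For instance, in Case~3 with $x_0=0$, $y_0\neq0$, $z_0\neq0$ the solution is $\bigl(y_0t\,\mathrm{e}^{\lambda_1 t},\,y_0\mathrm{e}^{\lambda_1 t},\,z_0\mathrm{e}^{\lambda_2 t}\bigr)$, a genuine space curve not contained in any coordinate plane. This does not sink the argument --- the displayed formulas for $\kappa^2$ and $\tau$ remain quotients of exponential polynomials for such initial values, so your lemma applies directly and yields the trichotomy without any appeal to planarity --- but the justification as written should be replaced by that direct application of the lemma (with separate attention, as you already note for the ``moreover'' part, to whether the leading polynomial degrees can drop when some coordinate of $r(0)$ vanishes).
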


Combined with Theorem \ref{thm k1} and \ref{thm k2}, we have the following corollary.

\begin{cor}\label{3-dim initial cor}
Under the assumptions of Corollary \ref{thm v=Pr}, for an arbitrary initial value $r(0)\in\mathbb{R}^3$, curvature $\kappa(t)$ of trajectory $r(t)$ of system (\ref{system}) is subject to one of the following three cases:
\begin{align*}
\lim\limits_{t\to+\infty}\kappa(t)=0, \qquad
\lim\limits_{t\to+\infty}\kappa(t)=+\infty, \qquad
\exists C_1, C_2>0, \ \exists T>0, \ \mathrm{s.t.,}&\ \kappa(t)\in\left[C_1, C_2\right], \forall t>T;
\end{align*}
and torsion $\tau(t)$ of system (\ref{system}) is subject to one of the following three cases:
\begin{align*}
\lim\limits_{t\to+\infty}\tau(t)=0, \qquad
\lim\limits_{t\to+\infty}\tau(t)=\infty, \qquad \phantom{+}
\exists \bar{C_1}, \bar{C_2}>0, \ \exists \bar{T}>0,& \ \mathrm{s.t.,}\ \left|\tau(t)\right|\in\left[\bar{C_1}, \bar{C_2}\right], \forall t>\bar{T}.
\end{align*}
Moreover, if for some initial value $r(0)\in\mathbb{R}^3$ that satisfies each coordinate in the vector $Pr(0)$ is non-zero, we have $\lim\limits_{t\to+\infty}\kappa(t)=0$ (or $+\infty$, or $\exists C_1, C_2>0, \ \exists T>0, \ \mathrm{s.t.,}\ \kappa(t)\in\left[C_1, C_2\right], \forall t>T$, respectively), then for an arbitrary $r(0)\in\mathbb{R}^3$ that satisfies each coordinate in the vector $Pr(0)$ is non-zero, we still have $\lim\limits_{t\to+\infty}\kappa(t)=0$ (or $+\infty$, or $\exists \tilde{C_1}, \tilde{C_2}>0, \ \exists \tilde{T}>0, \ \mathrm{s.t.,}\ \kappa(t)\in\left[\tilde{C_1}, \tilde{C_2}\right], \forall t>\tilde{T}$, respectively).
There is a similar result for torsion $\tau(t)$.
\end{cor}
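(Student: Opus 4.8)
The plan is to deduce Corollary \ref{3-dim initial cor} from Theorem \ref{3-dim initial} by pushing everything through the equivalence transformation $v(t)=Pr(t)$ and invoking Theorems \ref{thm k1} and \ref{thm k2} as the bridge. Fix an arbitrary $r(0)\in\mathbb{R}^3$ and set $v(0)=Pr(0)$, so that $v(t)=Pr(t)=\mathrm{e}^{tJ}v(0)$ is the trajectory of the equivalent system $\dot v(t)=Jv(t)$. Since $J$ is in real Jordan canonical form, Theorem \ref{3-dim initial} applies to this trajectory: $\kappa_v(t)$ realizes exactly one of the three alternatives $\lim_{t\to+\infty}\kappa_v(t)=0$, $\lim_{t\to+\infty}\kappa_v(t)=+\infty$, $\lim_{t\to+\infty}\kappa_v(t)=C$ with $0<C<+\infty$, and similarly $\tau_v(t)$ realizes exactly one of $\lim_{t\to+\infty}\tau_v(t)=0$, $\lim_{t\to+\infty}\tau_v(t)=\infty$, $\lim_{t\to+\infty}\tau_v(t)=C$ with $C\neq0$.

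Next I would translate each alternative through Theorem \ref{thm k1} for curvature and Theorem \ref{thm k2} for torsion. The first two alternatives transfer verbatim by the first two equivalences in those theorems, giving $\lim_{t\to+\infty}\kappa_r(t)=0$ resp. $+\infty$ (and likewise for $\tau_r$). For the third alternative, note that $\lim_{t\to+\infty}\kappa_v(t)=C\in(0,+\infty)$ forces $\kappa_v(t)\in[C/2,2C]$ for all sufficiently large $t$, which is precisely the hypothesis of the third equivalence in Theorem \ref{thm k1}; hence there exist $\tilde C_1,\tilde C_2>0$ and $\tilde T>0$ with $\kappa_r(t)\in[\tilde C_1,\tilde C_2]$ for all $t>\tilde T$, which is the third case claimed for $\kappa(t)$ in the corollary. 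The torsion case is identical, using $|\tau_v(t)|\in[|C|/2,2|C|]$ eventually and the third equivalence in Theorem \ref{thm k2}. Since the three alternatives on the $v$-side are exhaustive, so are the three resulting alternatives on the $r$-side; and they remain pairwise disjoint on the $r$-side because $\kappa_r\to0$, $\kappa_r\to+\infty$, and $\kappa_r$ eventually confined to a compact subinterval of $(0,+\infty)$ are mutually exclusive (similarly for $\tau_r$). This establishes the trichotomies for $\kappa(t)$ and $\tau(t)$.

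For the ``moreover'' part, suppose $r(0)$ is such that every coordinate of $Pr(0)$ is non-zero, i.e.\ $v(0)=(\tilde x_0,\tilde y_0,\tilde z_0)^{\mathrm T}$ with $\tilde x_0\tilde y_0\tilde z_0\neq0$. The ``moreover'' clause of Theorem \ref{3-dim initial}, applied to $\dot v(t)=Jv(t)$, says that the alternative realized by $\kappa_v$ (and the one realized by $\tau_v$) is the same for every such $v(0)$. The correspondence established above between the three $v$-side and three $r$-side alternatives is in fact a bijection: its forward direction was just shown, and for the reverse direction (needed, e.g., to know that ``$\kappa_r$ eventually in $[\tilde C_1,\tilde C_2]$'' can only arise from ``$\lim\kappa_v=C$'') one uses that Theorem \ref{3-dim initial} already pins $\kappa_v$ to exactly one alternative and eliminates the other two via the forward implications. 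Combining the invariance on the $v$-side with this bijection yields that the alternative realized by $\kappa_r$ (resp.\ $\tau_r$) is the same for all $r(0)$ whose image $Pr(0)$ has non-zero coordinates, which completes the proof.

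I expect no genuine analytic obstacle here; the proof is pure bookkeeping on top of Theorems \ref{thm k1}, \ref{thm k2}, and \ref{3-dim initial}. The one point that requires care — and the only place a reader could stumble — is matching the ``limit equals a constant'' alternative of Theorem \ref{3-dim initial} with the ``eventually confined to an interval'' alternative appearing in Theorems \ref{thm k1}--\ref{thm k2} and in the statement of the corollary, and verifying that the induced map between the two triples of alternatives is a bijection so that the invariance-of-initial-value statement transfers faithfully.
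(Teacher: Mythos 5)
Your proposal is correct and follows essentially the same route as the paper, which derives this corollary by applying Theorem \ref{3-dim initial} to the equivalent Jordan-form system and transferring the conclusions back through Theorems \ref{thm k1} and \ref{thm k2}. The paper leaves the bookkeeping implicit; your careful matching of the ``limit equals a constant'' alternative with the ``eventually confined to a compact interval'' alternative, and the check that the correspondence of alternatives is a bijection, are exactly the details needed to make that one-line derivation rigorous.
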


\section{Examples}

In this section, we give several examples, which correspond to each case of Theorem \ref{thm 2-dim} and \ref{thm1}, respectively.

\subsection*{Example 1 (Theorem \ref{thm 2-dim}(1))}

\

Let
\begin{align*}
\begin{pmatrix}\dot x\\[0.5ex] \dot y\end{pmatrix}\
=\begin{pmatrix}18&25\\[0.5ex] -13&-18\end{pmatrix}\ \begin{pmatrix}x\\[0.5ex] y\end{pmatrix}\
\end{align*}
be a two-dimensional linear time-invariant system. Then we have
\begin{align*}
A=\begin{pmatrix}18&25\\[0.5ex] -13&-18\end{pmatrix}\ , \qquad
\mathrm{e}^{tA}
=\begin{pmatrix}\cos{t}+18\sin{t}&25\sin{t}
\\[1ex] -13\sin{t}&\cos{t}-18\sin{t}
\end{pmatrix},
\end{align*}
and $\dot r(t)=A\mathrm{e}^{tA}r(0)$ and $\ddot r(t)=A^2\mathrm{e}^{tA}r(0)$.
Let $r(0)=(1,1)^\mathrm{T}$ be the initial value of $r(t)$. Then the square of curvature $\kappa(t)$ of curve $r(t)$ is
\begin{align*}
\kappa^2(t)
&=\left( \frac{\dot x\ddot y-\ddot x\dot y}{ (\dot x^2+\dot y^2)^{3/2}} \right) ^2
=\frac{ 1369 }
{ 2 ( 703+702\cos{2t}-6\sin{2t} )^3 }.
\end{align*}
Therefore $\lim\limits_{t\to+\infty}\kappa(t)$ does not exist. Using Theorem \ref{thm 2-dim} (1), we obtain that the zero solution of the system is stable.

The trajectory of $r(t)$ and the graph of function $\kappa(t)$ are shown in Figure \ref{fig:1}.
\begin{figure}[h]
\centering
\subfloat[Trajectory]{
\label{fig:1(1)}
\begin{minipage}[h]{0.48\textwidth}
   \centering
   \includegraphics[angle=0,width=1\textwidth]{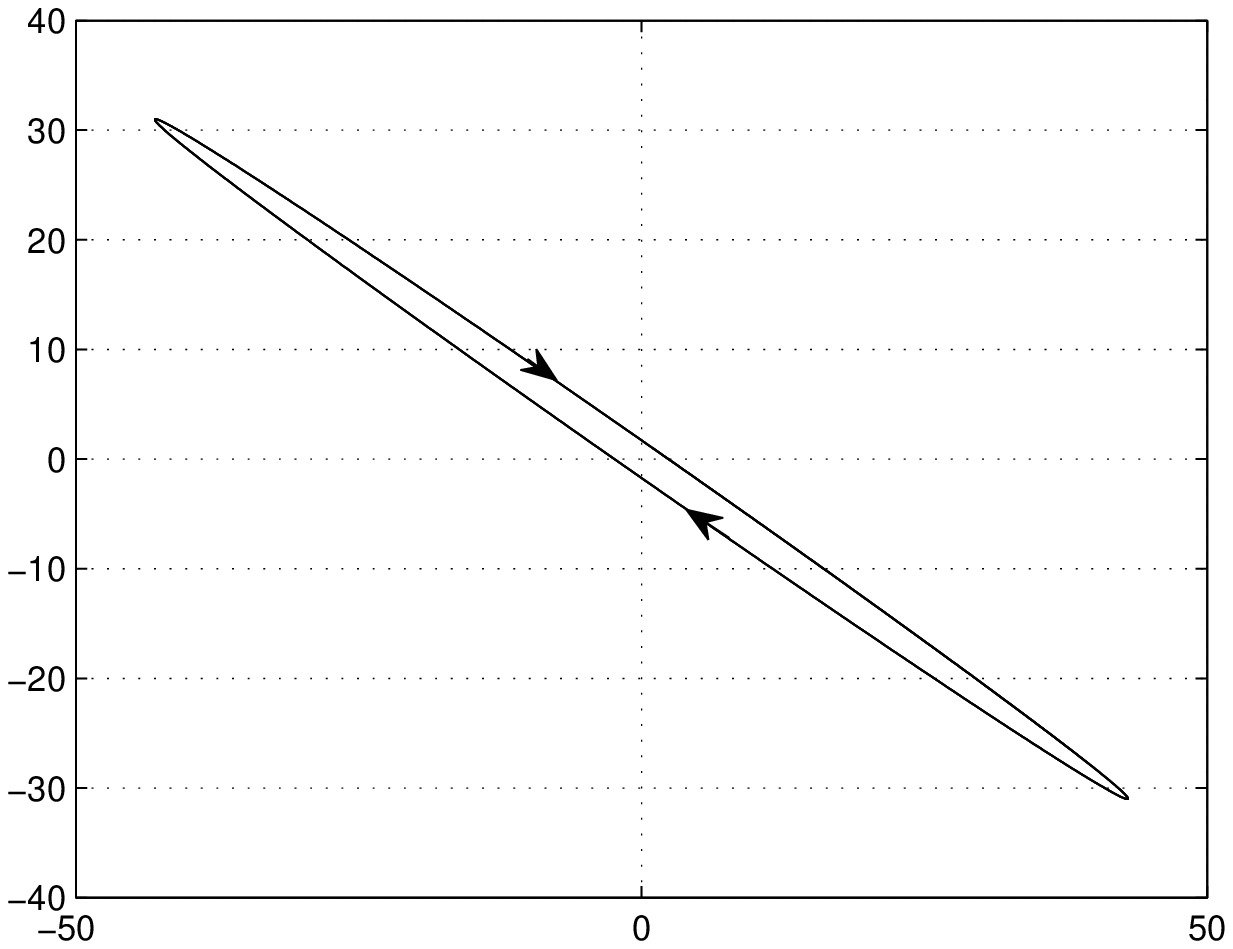}
\end{minipage}
}
\subfloat[Curvature]{
\label{fig:1(2)}
\begin{minipage}[h]{0.48\textwidth}
   \centering
   \includegraphics[angle=0,width=1\textwidth]{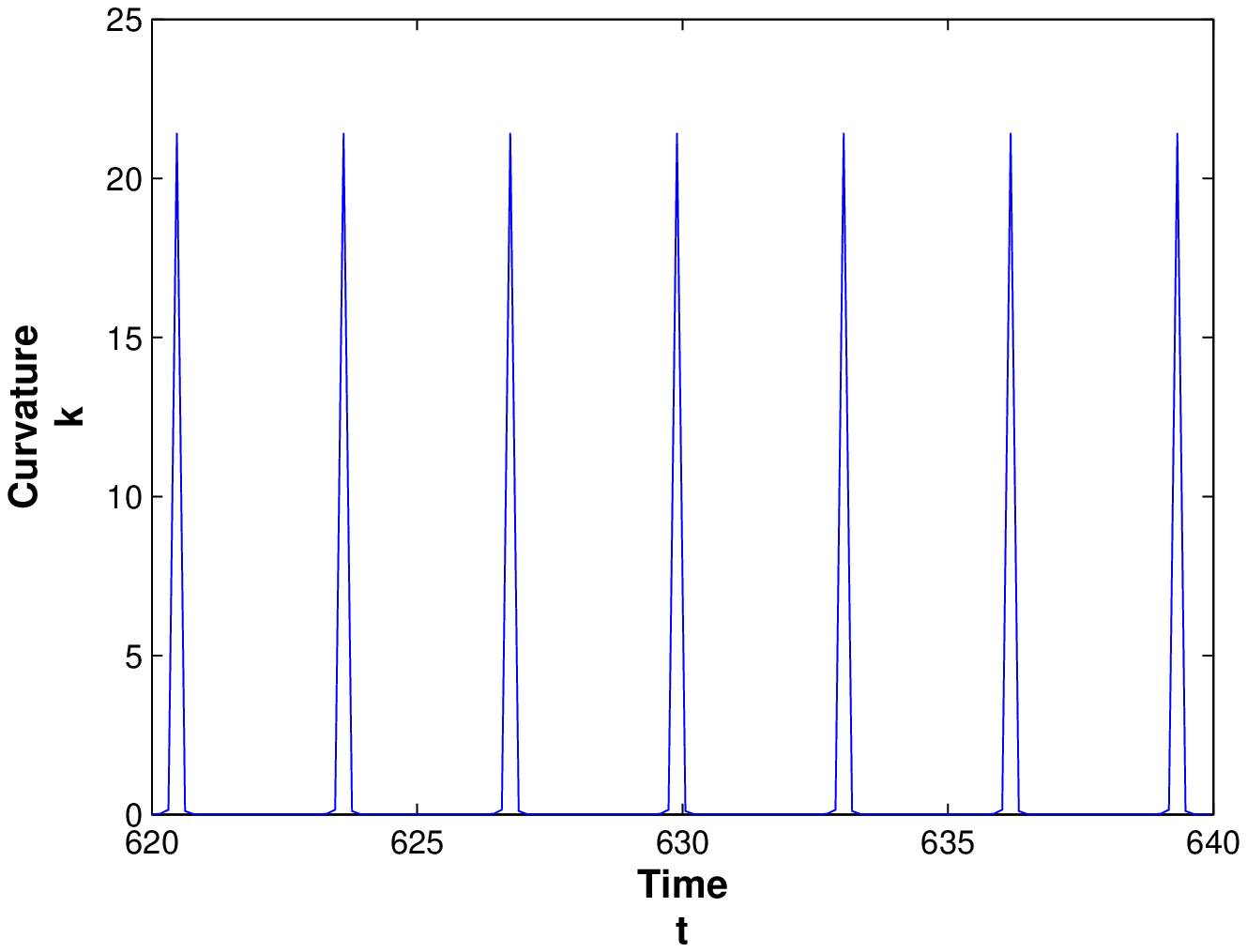}
\end{minipage}
}
\caption{Example 1}
\label{fig:1}
\end{figure}

\subsection*{Example 2 (Theorem \ref{thm 2-dim}(2))}

\

Let
\begin{align*}
\begin{pmatrix}\dot x\\[0.5ex] \dot y\end{pmatrix}\
=\begin{pmatrix}-4&-2\\[0.5ex] 1&-1\end{pmatrix}\ \begin{pmatrix}x\\[0.5ex] y\end{pmatrix}\
\end{align*}
be a two-dimensional linear time-invariant system. Then we have
\begin{align*}
A=\begin{pmatrix}-4&-2\\[0.5ex] 1&-1\end{pmatrix}\ , \qquad
\mathrm{e}^{tA}
=\mathrm{e}^{-3t}
\begin{pmatrix} 2-\mathrm{e}^{t} & 2(1-\mathrm{e}^{t})
\\[1ex] -1+\mathrm{e}^{t} & -1+2\mathrm{e}^{t}
\end{pmatrix}.
\end{align*}
Let $r(0)=(1,1)^\mathrm{T}$ be the initial value of $r(t)$.
Then the square of curvature $\kappa(t)$ of curve $r(t)$ is
\begin{align*}
\kappa^2(t)
=\frac{ \mathrm{e}^{8t} }{ 36(5-6\mathrm{e}^{t}+2\mathrm{e}^{2t})^3 }.
\end{align*}
Hence we have $\lim\limits_{t\to+\infty}\kappa(t)=+\infty$. Using Theorem \ref{thm 2-dim} (2), we obtain that the zero solution of the system is asymptotically stable.

The trajectory of $r(t)$ and the graph of function $\kappa(t)$ are shown in Figure \ref{fig:2}.
\begin{figure}[h]
\centering
\subfloat[Trajectory]{
\label{fig:2(1)}
\begin{minipage}[h]{0.48\textwidth}
   \centering
   \includegraphics[angle=0,width=1\textwidth]{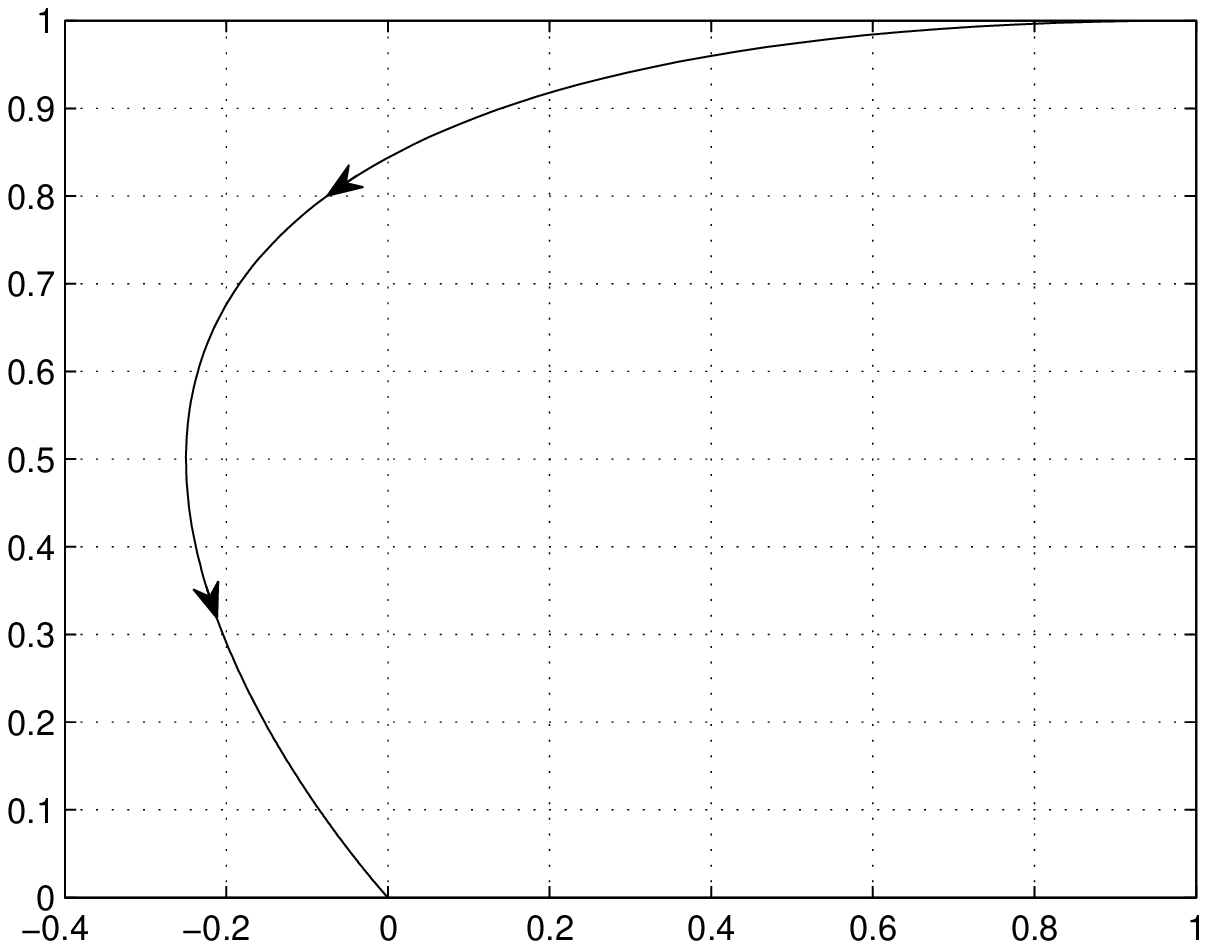}
\end{minipage}
}
\subfloat[Curvature]{
\label{fig:2(2)}
\begin{minipage}[h]{0.48\textwidth}
   \centering
   \includegraphics[angle=0,width=1\textwidth]{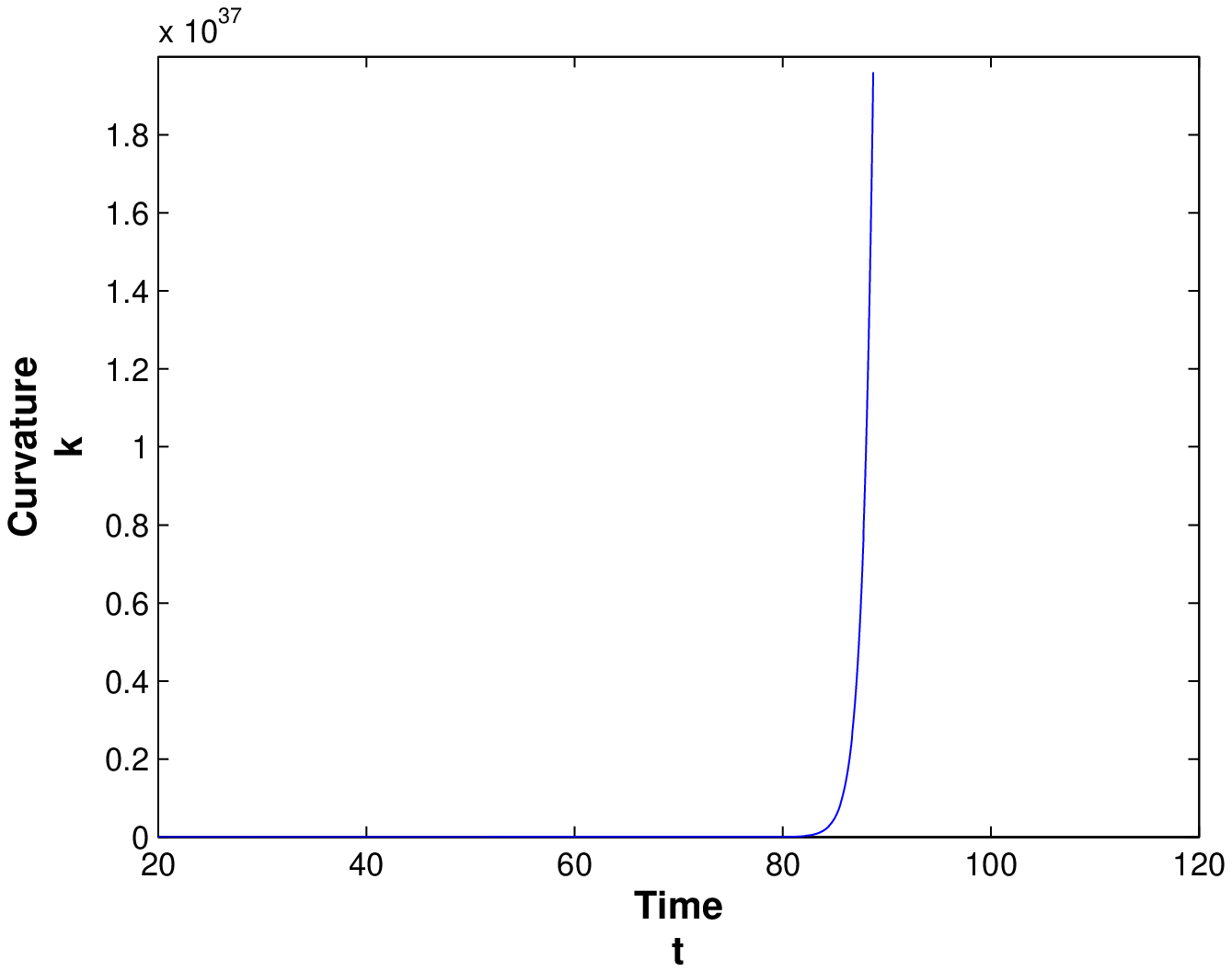}
\end{minipage}
}
\caption{Example 2}
\label{fig:2}
\end{figure}

\subsection*{Example 3 (Theorem \ref{thm1}(1))}

\

Let
\begin{align*}
\begin{pmatrix}\dot x\\[0.5ex] \dot y\\[0.5ex] \dot z\end{pmatrix}\
=\begin{pmatrix}-4 & -6 & 2\sqrt3 \\[0.5ex] -6 & -17 & -5\sqrt3 \\[0.5ex] 2\sqrt3 & -5\sqrt3 & -27 \end{pmatrix}\ \begin{pmatrix}x\\[0.5ex] y\\[0.5ex] z\end{pmatrix}\
\end{align*}
be a three-dimensional linear time-invariant system. Then we have
\begin{align*}
A&=\begin{pmatrix}-4 & -6 & 2\sqrt3 \\[0.5ex] -6 & -17 & -5\sqrt3 \\[0.5ex] 2\sqrt3 & -5\sqrt3 & -27 \end{pmatrix}\ , \\
\mathrm{e}^{tA}
&=\begin{pmatrix} \frac14(\mathrm{e}^{-16t}+3) & -\frac{3}{8}(-\mathrm{e}^{-16t}+1) & \frac{\sqrt3}{8}(-\mathrm{e}^{-16t}+1)
\\[2ex] -\frac{3}{8}(-\mathrm{e}^{-16t}+1) & \frac{1}{16}(4\mathrm{e}^{-32t}+9\mathrm{e}^{-16t}+3) & -\frac{\sqrt3}{16}(-4\mathrm{e}^{-32t}+3\mathrm{e}^{-16t}+1)
\\[2ex] \frac{\sqrt3}{8}(-\mathrm{e}^{-16t}+1) & -\frac{\sqrt3}{16}(-4\mathrm{e}^{-32t}+3\mathrm{e}^{-16t}+1) & \frac{1}{16}(12\mathrm{e}^{-32t}+3\mathrm{e}^{-16t}+1)
\end{pmatrix}.
\end{align*}
The square of curvature $\kappa(t)$ of curve $r(t)$ is
\begin{align*}
\kappa^2(t)&=\left( \frac{\left\|\dot{r}(t)\times\ddot{r}(t)\right\|}{\left\|\dot{r}(t)\right\|^3} \right)^2
=\left( \frac{\left\|A\mathrm{e}^{tA}r(0)\times A^2\mathrm{e}^{tA}r(0)\right\|}{\left\|A\mathrm{e}^{tA}r(0)\right\|^3} \right)^2 \nonumber\\
&=\frac{256 u^2 v^2 \mathrm{e}^{96t}}{ \{ \frac14 u^2 \mathrm{e}^{32t} + 3( \frac14 u \mathrm{e}^{16t}-2v )^2 + ( \frac34 u \mathrm{e}^{16t}+2v )^2 \}^3 },
\end{align*}
where $u=2x_0+3y_0-\sqrt3 z_0$, and $v=y_0+\sqrt3 z_0$.

If the initial value $r(0)=(x_0,y_0,z_0)^\mathrm{T}$ satisfies $u\neq0$ and $v\neq0$, then we have
\begin{align*}
\lim\limits_{t\to+\infty}\kappa(t)
=\frac{16 |v|}{u^2}
=\frac{16 |y_0+\sqrt3 z_0|}{(2x_0+3y_0-\sqrt3 z_0)^2}.
\end{align*}
Using Theorem \ref{thm1} (1), we obtain that the zero solution of the system is stable.

The trajectory of $r(t)$ and the graph of function $\kappa(t)$ are shown in Figure \ref{fig:3}, where $r(0)=(1,1,1)^\mathrm{T}$.
\begin{figure}[h]
\centering
\subfloat[Trajectory]{
\label{fig:3(1)}
\begin{minipage}[h]{0.48\textwidth}
   \centering
   \includegraphics[angle=0,width=1\textwidth]{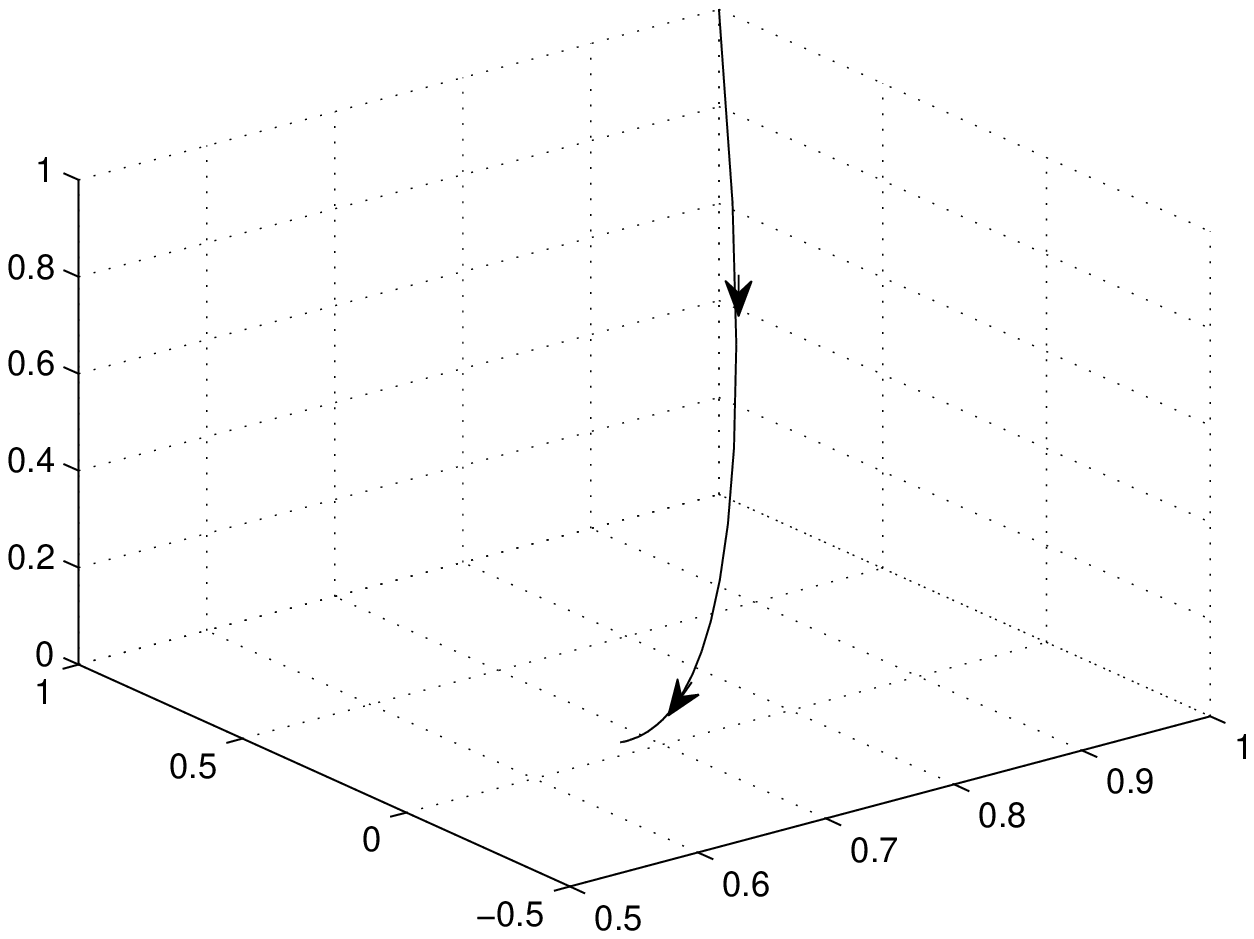}
\end{minipage}
}
\subfloat[Curvature]{
\label{fig:3(2)}
\begin{minipage}[h]{0.48\textwidth}
   \centering
   \includegraphics[angle=0,width=1\textwidth]{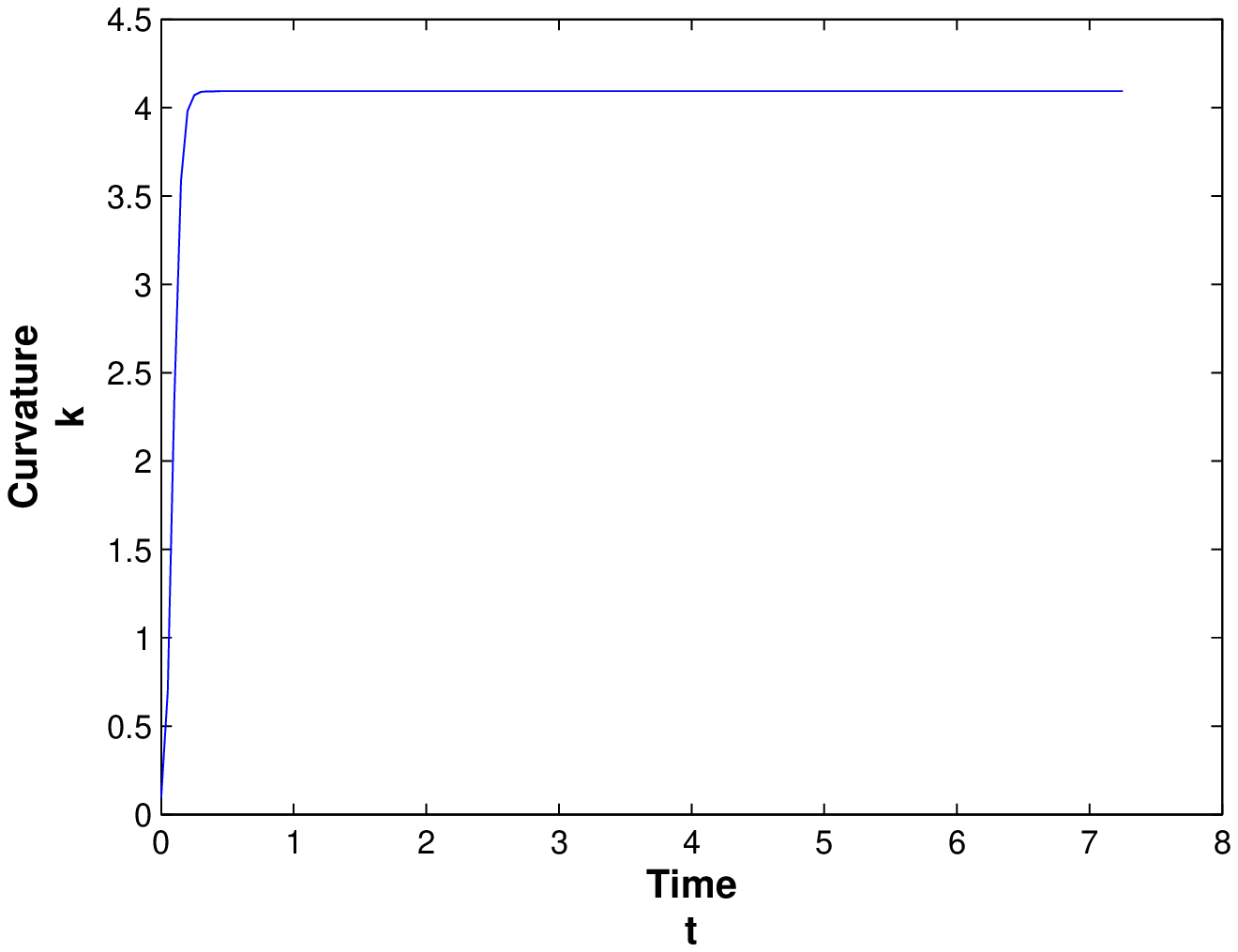}
\end{minipage}
}
\caption{Example 3}
\label{fig:3}
\end{figure}

\subsection*{Example 4 (Theorem \ref{thm1}(2))}

\

Let
\begin{align*}
\begin{pmatrix}\dot x\\[0.5ex] \dot y\\[0.5ex] \dot z\end{pmatrix}\
=\begin{pmatrix}-1&4&0\\[0.5ex] 0&-1&2\\[0.5ex] 0&-2&-1\end{pmatrix}\ \begin{pmatrix}x\\[0.5ex]y\\[0.5ex]z\end{pmatrix}\
\end{align*}
be a three-dimensional linear time-invariant system. Then we have
\begin{align*}
A=\begin{pmatrix}-1&4&0\\[0.5ex]0&-1&2\\[0.5ex]0&-2&-1\end{pmatrix}\ , \qquad
\mathrm{e}^{tA}
=\mathrm{e}^{-t}
\begin{pmatrix}1&2\sin{2t}&2(1-\cos{2t})
\\[1ex] 0&\cos{2t}&\sin{2t}
\\[1ex] 0&-\sin{2t}&\cos{2t}
\end{pmatrix}.
\end{align*}
The square of curvature $\kappa(t)$ of curve $r(t)$ is
\begin{align*}
\kappa^2(t)
=\frac{
20\mathrm{e}^{2t}(y_0^2+z_0^2)  \big\{ 5(y_0^2+z_0^2)[5-4\sin^2(2t-\varphi)] + [ x_0+2z_0+2\sqrt{5(y_0^2+z_0^2)}\sin(2t-\varphi) ]^2  \big\}
}{
\big\{
5(y_0^2+z_0^2)+ [ x_0+2z_0+2\sqrt{5(y_0^2+z_0^2)}\sin(2t-\varphi) ]^2
\big\}^3},
\end{align*}
where $r(0)=(x_0,y_0,z_0)^\mathrm{T}\in\mathbb{R}^3$ is the initial value of $r(t)$, and $\varphi\in\mathbb{R}$ satisfies $\sin{\varphi}=\frac{2y_0+z_0}{\sqrt{5(y_0^2+z_0^2)}}$ and $\cos{\varphi}=\frac{y_0-2z_0}{\sqrt{5(y_0^2+z_0^2)}}$.

If the initial value $r(0)=(x_0,y_0,z_0)^\mathrm{T}$ satisfies $y_0^2+z_0^2>0$, then we have $\lim\limits_{t\to+\infty}\kappa(t)=+\infty$. Noting that $\mathrm{det}\,A=-5\neq0$, and using Theorem \ref{thm1} (2), we obtain that the zero solution of the system is asymptotically stable.

The trajectory of $r(t)$ and the graph of function $\kappa(t)$ are shown in Figure \ref{fig:4}, where $r(0)=(1,1,1)^\mathrm{T}$.
\begin{figure}[h]
\centering
\subfloat[Trajectory]{
\label{fig:4(1)}
\begin{minipage}[h]{0.48\textwidth}
   \centering
   \includegraphics[angle=0,width=1\textwidth]{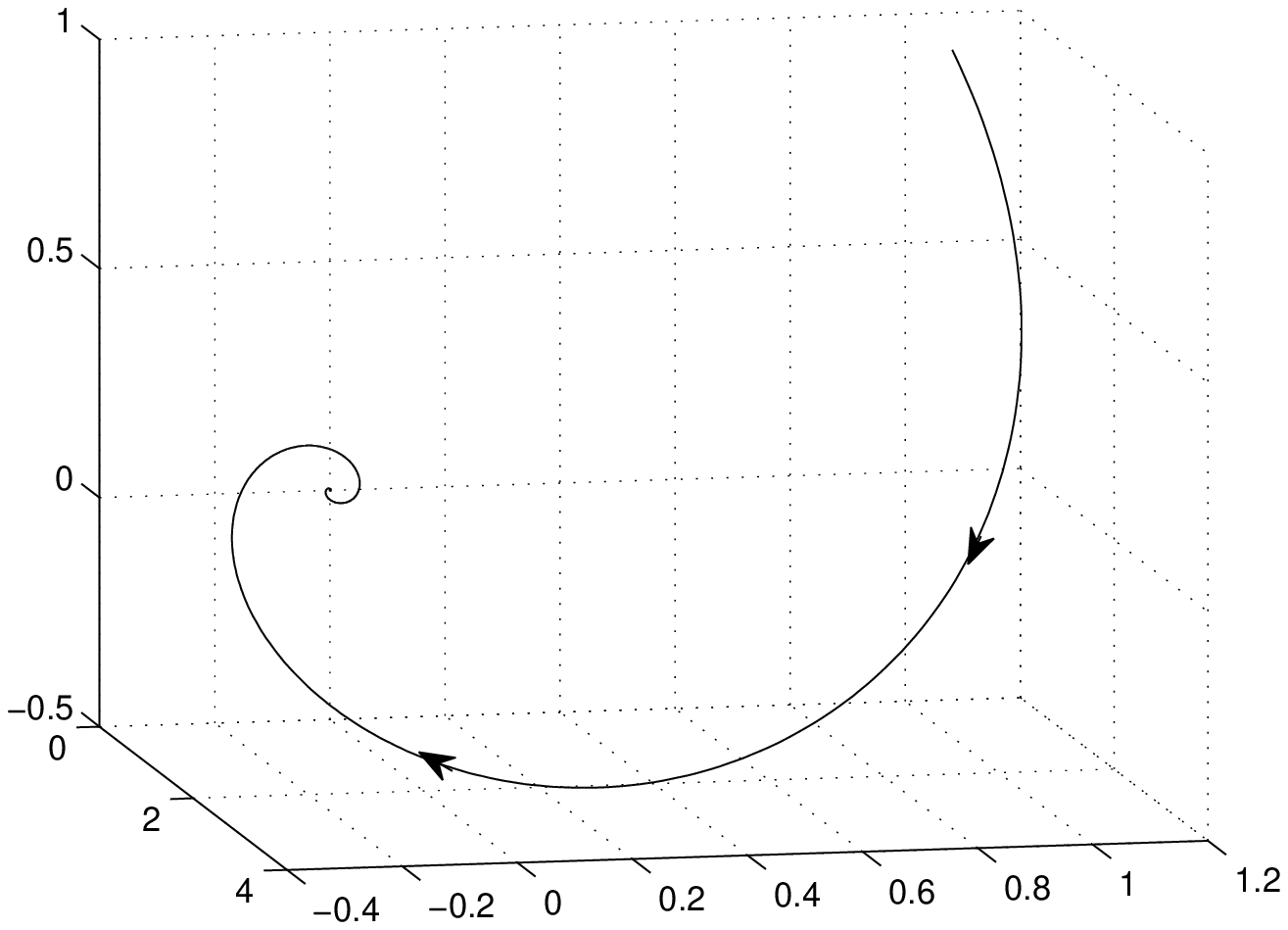}
\end{minipage}
}
\subfloat[Curvature]{
\label{fig:4(2)}
\begin{minipage}[h]{0.48\textwidth}
   \centering
   \includegraphics[angle=0,width=1\textwidth]{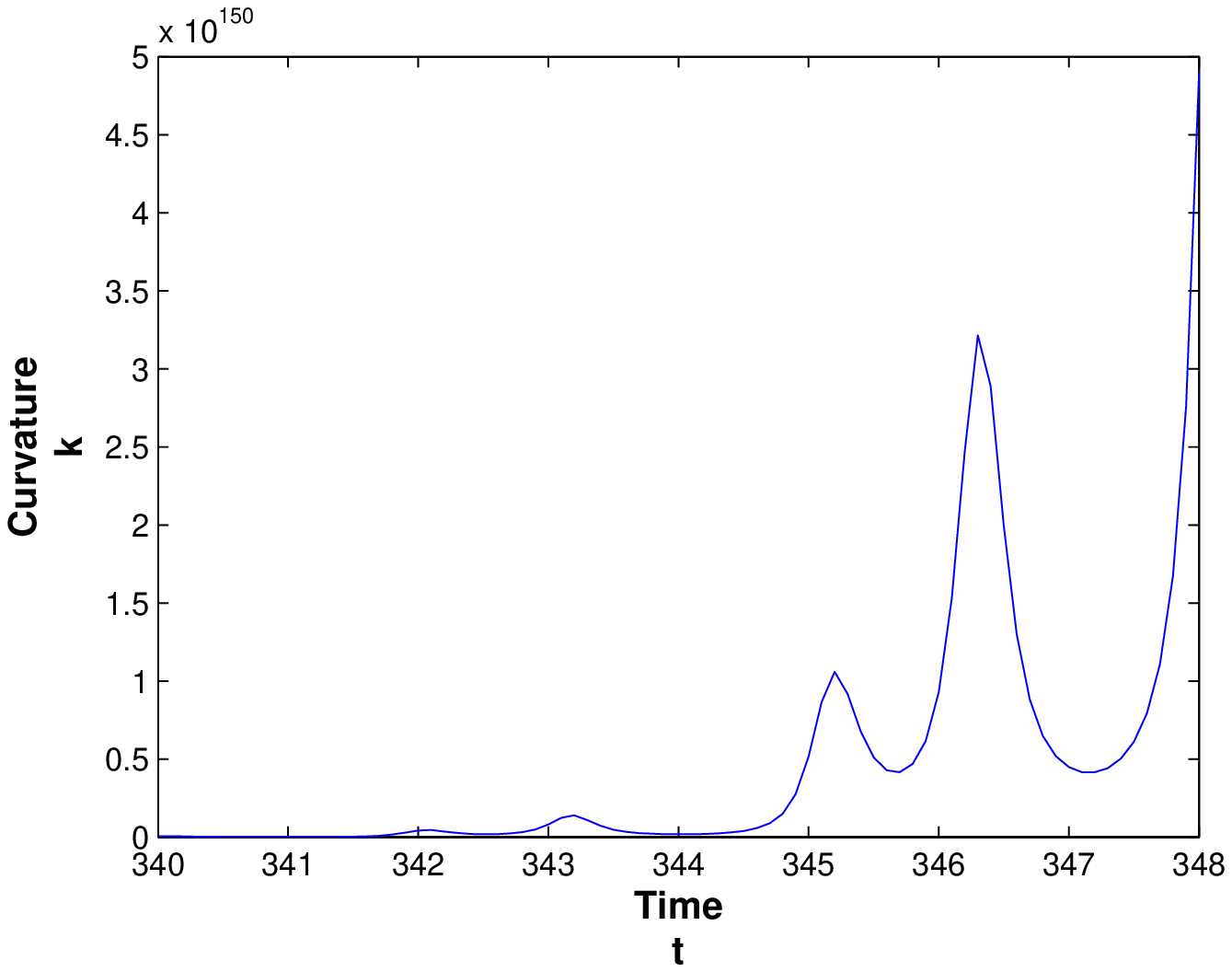}
\end{minipage}
}
\caption{Example 4}
\label{fig:4}
\end{figure}

\subsection*{Example 5 (Theorem \ref{thm1}(3))}

\

Let
\begin{align*}
\begin{pmatrix}\dot x\\[0.5ex]\dot y\\[0.5ex]\dot z\end{pmatrix}\
=\begin{pmatrix}-5&-12&9\\[0.5ex]-1&-30&19\\[0.5ex]0&-36&22\end{pmatrix}\ \begin{pmatrix}x\\[0.5ex]y\\[0.5ex]z\end{pmatrix}\
\end{align*}
be a three-dimensional linear time-invariant system. Then we have
\begin{align*}
A=\begin{pmatrix}-5&-12&9\\[0.5ex]-1&-30&19\\[0.5ex]0&-36&22\end{pmatrix}\ , \qquad
\mathrm{e}^{tA}
=\mathrm{e}^{-6t}
\begin{pmatrix}3 - 3 \mathrm{e}^t + \mathrm{e}^{4t} &~ 3(1 - \mathrm{e}^{4t}) &~ -3 + \mathrm{e}^t + 2 \mathrm{e}^{4t}
\\[1ex] 7 - 9 \mathrm{e}^t + 2 \mathrm{e}^{4t} &~ 7 - 6 \mathrm{e}^{4t} &~  -7 + 3 \mathrm{e}^t + 4 \mathrm{e}^{4t}
\\[1ex] 3(3 -4 \mathrm{e}^t + \mathrm{e}^{4t}) &~ 9(1 - \mathrm{e}^{4t}) &~ -9 + 4 \mathrm{e}^t + 6 \mathrm{e}^{4t}
\end{pmatrix}.
\end{align*}

We notice that $\lim\limits_{t\to+\infty}\kappa(t)=0$~ for almost all initial value $r(0)\in\mathbb{R}^3$ and therefore we cannot determine the stability of the system by curvature. Nevertheless, torsion $\tau(t)$ of curve $r(t)$ is
\begin{align*}
\tau(t)
=-\frac{20uvw \mathrm{e}^{9t}}
{25v^2(-3w+u\mathrm{e}^{4t})^2 + ( -10vw+8uw\mathrm{e}^{3t}+5uv\mathrm{e}^{4t} )^2 + [ 24uw\mathrm{e}^{3t}+5v(w+u\mathrm{e}^{4t}) ]^2},
\end{align*}
where $u=x_0-3y_0+2z_0$, $v=-3x_0+z_0$, and $w=x_0+y_0-z_0$.

If the initial value $r(0)=(x_0,y_0,z_0)^\mathrm{T}$ satisfies $uvw\neq0$, namely,
$
\left\{
\begin{aligned}
x_0-3y_0+2z_0 &\neq0,\\
-3x_0+z_0 &\neq0,\\
x_0+y_0-z_0 &\neq0,
\end{aligned}\right.
$
then we have
$
\lim\limits_{t\to+\infty}\tau(t)=\infty.
$
Using Theorem \ref{thm1} (3), we obtain that the zero solution of the system is asymptotically stable.

The trajectory of $r(t)$ and the graph of function $\tau(t)$ are shown in Figure \ref{fig:5}, where $r(0)=(2,1,1)^\mathrm{T}$.
\begin{figure}[h]
\centering
\subfloat[Trajectory]{
\label{fig:5(1)}
\begin{minipage}[h]{0.48\textwidth}
   \centering
   \includegraphics[angle=0,width=1\textwidth]{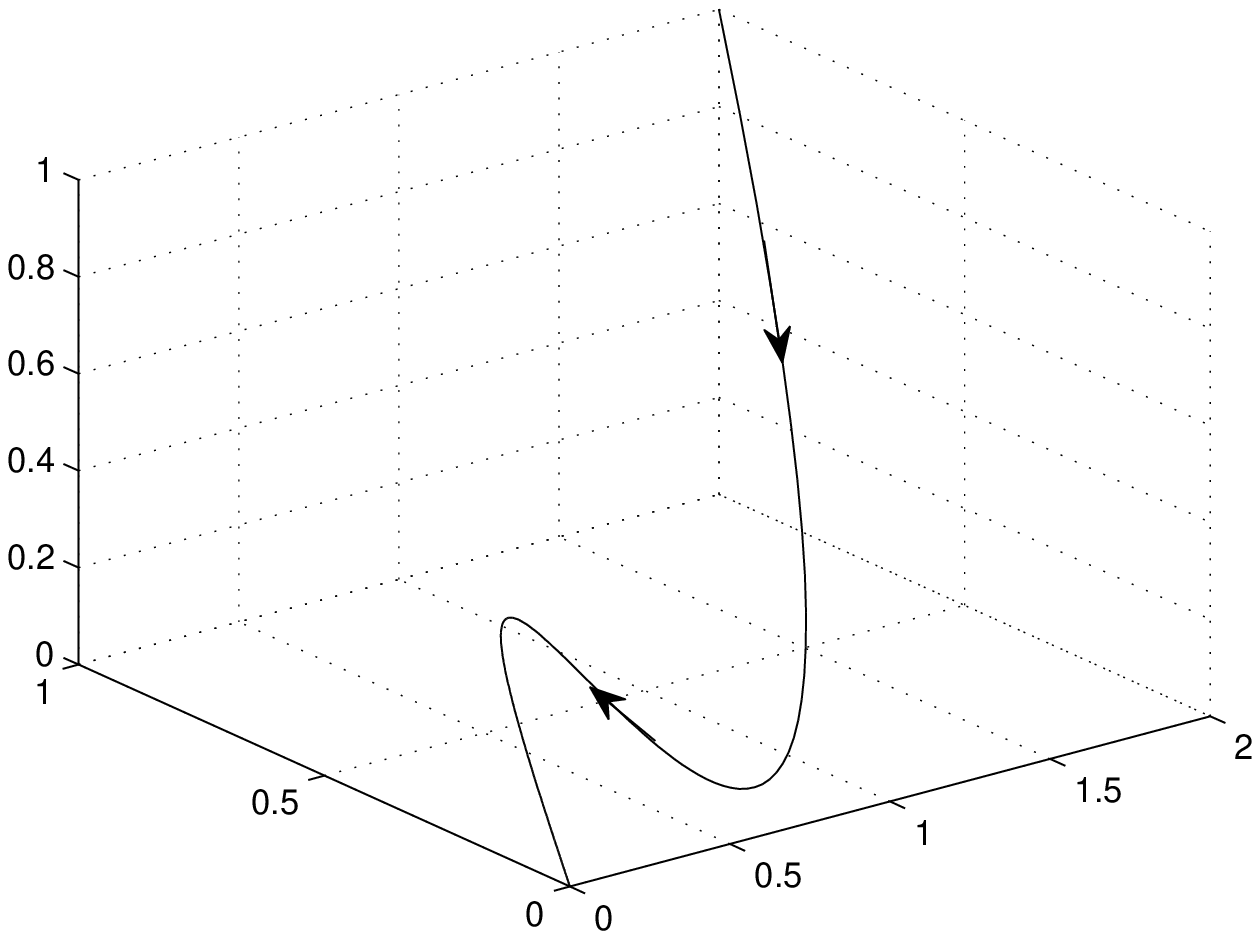}
\end{minipage}
}
\subfloat[Torsion]{
\label{fig:5(2)}
\begin{minipage}[h]{0.48\textwidth}
   \centering
   \includegraphics[angle=0,width=1\textwidth]{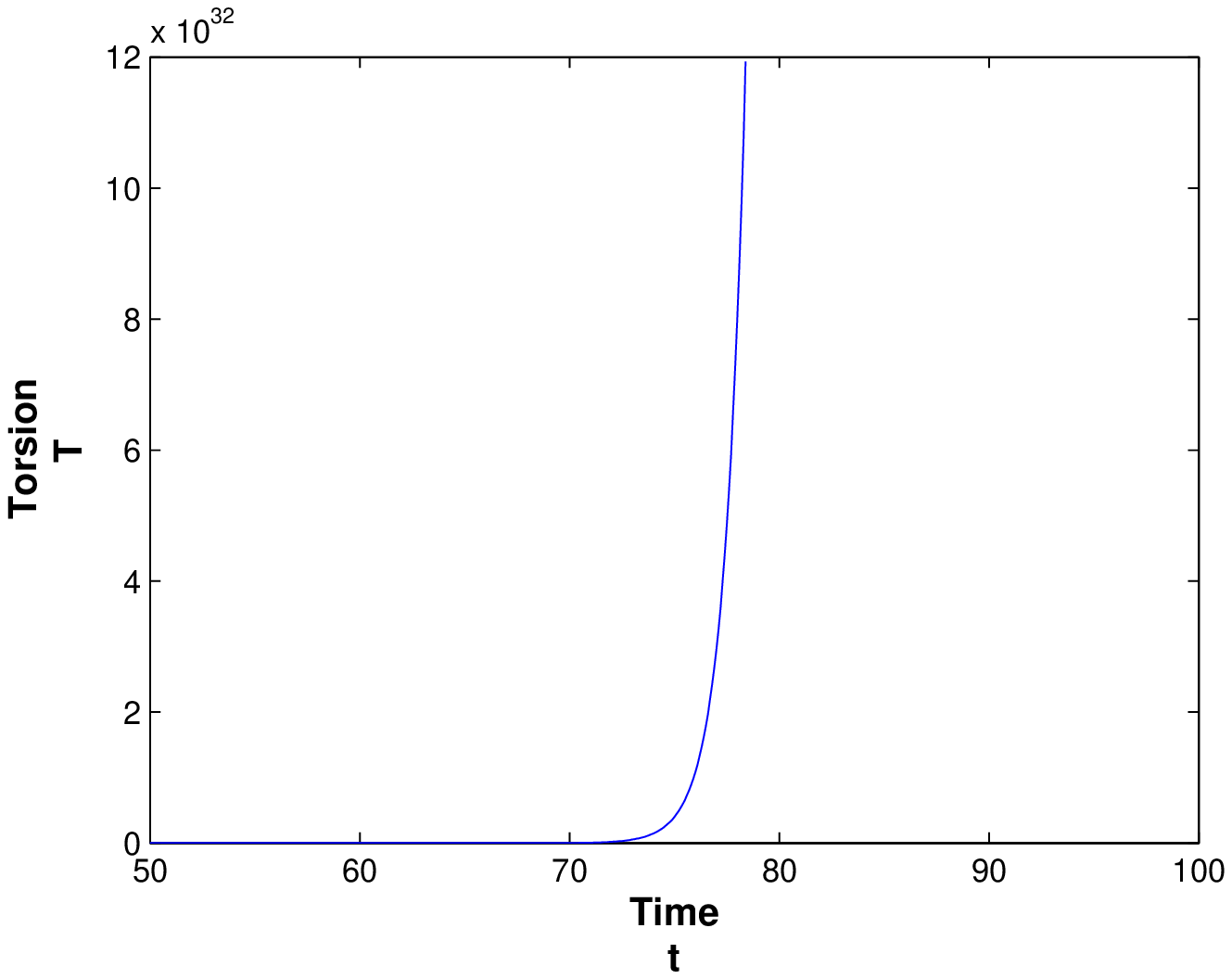}
\end{minipage}
}
\caption{Example 5}
\label{fig:5}
\end{figure}

\section{Conclusion}
The main results of this paper, Theorem \ref{thm 2-dim} and \ref{thm1}, are proved. Firstly, we give the relationship between curvatures of trajectories of two equivalent systems. Secondly, for each case of real Jordan canonical forms, we analyse curvatures and torsions of trajectories for all situations of the eigenvalues, which completes the proofs of theorems.
\par
These two theorems give the relationship between curvature and stability. More precisely, several sufficient conditions for stability of the zero solution of two or three-dimensional linear time-invariant systems, based on curvature and torsion, are given. For each case of the two theorems, we give an example to illustrate the results.
\par
Further work: we will give the description of stability for higher dimensional linear time-invariant systems, by curvatures, which may have some advantages for analysing the stability. Moreover, the application will be investigated in the future.

\section*{Acknowledgment}
The second author would like to express his sincere thanks to Professor D. Krupka for his helps. The special thanks to Professor Shoudong Huang of University of Technology, Sydney for his very helpful suggestions.

\
\appendix

\section{Proofs of Theorem \ref{thm k1} and \ref{thm k2}}\label{Proof}
We give the proofs of Theorem \ref{thm k1} and \ref{thm k2} in this section.
\par
First, we need the following concepts and lemmas.

\begin{defn}[\!\!\cite{Horn}]
Let $A$ be an $m\times n$ complex matrix with rank $r$, and $\lambda_1, \lambda_2, \cdots, \lambda_r$ be the non-zero eigenvalues of $AA^\mathrm{H}$, where $A^\mathrm{H}$ denotes the conjugate transpose of $A$. Then
\begin{align*}
\delta_i=\sqrt{\lambda_i}\quad (i=1,2,\cdots,r)
\end{align*}
are called the singular values of $A$.
\end{defn}

\begin{prop}[Singular value decomposition, the case of real matrix \cite{Horn}]\label{SVD}
Let $A$ be an $m\times n$ real matrix with rank $r$, and $\delta_1\geqslant\delta_2\geqslant\cdots\geqslant\delta_r$ be the singular values of $A$. Then there exists an $m\times m$ orthogonal matrix $U$ and an $n\times n$ orthogonal matrix $V$, such that
\begin{align*}
A=UDV^\mathrm{T}=U\begin{pmatrix}\Delta&0\\0&0\end{pmatrix}V^\mathrm{T},
\end{align*}
where $\Delta=\mathrm{diag}\{\delta_1, \delta_2, \cdots, \delta_r\}$.
\end{prop}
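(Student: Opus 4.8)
The plan is to construct the matrices $U$ and $V$ explicitly from the spectral decomposition of the symmetric positive semidefinite matrix $A^{\mathrm{T}}A$. First I would invoke the spectral theorem: since $A^{\mathrm{T}}A$ is a real symmetric $n\times n$ matrix, there is an orthogonal matrix $V=(v_1,\dots,v_n)$, written in columns, with $V^{\mathrm{T}}A^{\mathrm{T}}AV=\mathrm{diag}(\lambda_1,\dots,\lambda_n)$; because $A^{\mathrm{T}}A$ is positive semidefinite we may order $\lambda_1\geqslant\cdots\geqslant\lambda_n\geqslant 0$. One checks that $\mathrm{rank}(A^{\mathrm{T}}A)=\mathrm{rank}(A)=r$, so $\lambda_1,\dots,\lambda_r$ are positive and $\lambda_{r+1}=\cdots=\lambda_n=0$; set $\delta_i=\sqrt{\lambda_i}$ for $i\leqslant r$.

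Next I would produce the left singular vectors. For $i\leqslant r$ put $u_i=\delta_i^{-1}Av_i\in\mathbb{R}^m$. A short computation gives $u_i^{\mathrm{T}}u_j=\delta_i^{-1}\delta_j^{-1}v_i^{\mathrm{T}}A^{\mathrm{T}}Av_j=\delta_i^{-1}\delta_j^{-1}\lambda_j\,v_i^{\mathrm{T}}v_j$, which equals $1$ if $i=j$ and $0$ otherwise, so $u_1,\dots,u_r$ are orthonormal. I would then extend them by Gram--Schmidt to an orthonormal basis $u_1,\dots,u_m$ of $\mathbb{R}^m$ and set $U=(u_1,\dots,u_m)$, an $m\times m$ orthogonal matrix.

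Then I would verify $U^{\mathrm{T}}AV=D$ entrywise. For $j>r$ one has $\|Av_j\|^2=v_j^{\mathrm{T}}A^{\mathrm{T}}Av_j=\lambda_j=0$, hence $Av_j=0$; for $j\leqslant r$ one has $Av_j=\delta_j u_j$ by definition. Therefore the $(i,j)$ entry of $U^{\mathrm{T}}AV$ is $u_i^{\mathrm{T}}Av_j$, which is $\delta_j u_i^{\mathrm{T}}u_j=\delta_j$ when $i=j\leqslant r$ and $0$ in every other case; that is, $U^{\mathrm{T}}AV=\begin{pmatrix}\Delta&0\\0&0\end{pmatrix}=D$, so $A=UDV^{\mathrm{T}}$.

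Finally, to match the definition of singular values used in the excerpt, where they are the square roots of the non-zero eigenvalues of $AA^{\mathrm{H}}=AA^{\mathrm{T}}$, I would note the standard fact that $A^{\mathrm{T}}A$ and $AA^{\mathrm{T}}$ have the same non-zero eigenvalues with the same multiplicities: if $A^{\mathrm{T}}Av=\lambda v$ with $\lambda\neq0$ then $AA^{\mathrm{T}}(Av)=\lambda(Av)$ and $Av\neq0$, and symmetrically. Hence the $\delta_i$ constructed above coincide with the $\delta_i$ of the statement. There is no genuine obstacle here; the only points requiring a little care are the rank/multiplicity bookkeeping (so that the zero blocks of $D$ have the correct sizes) and the reconciliation of the $AA^{\mathrm{H}}$ in the definition with the $A^{\mathrm{T}}A$ that the construction naturally uses.
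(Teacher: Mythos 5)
Your proof is correct: it is the standard construction of the SVD via the spectral theorem applied to $A^{\mathrm{T}}A$, with the left singular vectors $u_i=\delta_i^{-1}Av_i$ extended to an orthonormal basis, and the reconciliation of $A^{\mathrm{T}}A$ with the $AA^{\mathrm{H}}$ appearing in the paper's definition of singular values is handled properly. Note that the paper itself offers no proof of this proposition --- it is imported verbatim from Horn and Johnson --- so there is nothing to compare against; your argument is the one a reader would find in that reference.
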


\begin{lem}[\!\!\cite{Strang}]\label{prop det}
Let $P$ be a $3\times 3$ matrix, and
$a,b,c\in\mathbb{R}^3$ be three column vectors.
Then the scalar triple product
\begin{align*}
(Pa, Pb, Pc)=(\mathrm{det}\, P)(a,b,c).
\end{align*}
\end{lem}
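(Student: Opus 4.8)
The plan is to reduce the claimed identity to the multiplicativity of the determinant. Write $[a\ b\ c]$ for the $3\times 3$ matrix whose columns are $a$, $b$, and $c$. The first step is to recall the classical identity
\begin{align*}
(a,b,c)=(a\times b)\cdot c=\mathrm{det}\,[a\ b\ c],
\end{align*}
which follows by expanding the determinant along one column and comparing with the coordinate expression for $(a\times b)\cdot c$; this is the only point at which the explicit components of the cross product enter.

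Next I would observe that left multiplication by $P$ acts column-wise, so the matrix whose columns are $Pa$, $Pb$, $Pc$ is exactly the product $P\,[a\ b\ c]$. Applying the identity of the first step to the triple $Pa,Pb,Pc$ then gives $(Pa,Pb,Pc)=\mathrm{det}\big(P\,[a\ b\ c]\big)$. Invoking the multiplicative property $\mathrm{det}(XY)=(\mathrm{det}\,X)(\mathrm{det}\,Y)$ for square matrices yields
\begin{align*}
(Pa,Pb,Pc)=(\mathrm{det}\,P)\,\mathrm{det}\,[a\ b\ c]=(\mathrm{det}\,P)(a,b,c),
\end{align*}
which is the assertion.

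There is essentially no obstacle here: the argument is three lines once one grants the determinant formula for the scalar triple product and the multiplicativity of $\mathrm{det}$, both of which are standard (and the latter is precisely the tool used throughout Section \ref{SectionRelationship}). If one wished to avoid even the determinant formula, an alternative is to verify the identity first when $P$ is a permutation matrix, a diagonal matrix, or an elementary shear, using the alternating multilinearity of $(\cdot,\cdot,\cdot)$, and then extend to arbitrary $P$ by writing it as a product of such matrices; but the determinant route is cleaner and is what I would present.
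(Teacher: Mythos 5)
Your argument is correct: the identity $(a,b,c)=\mathrm{det}\,[a\ b\ c]$, the observation that $[Pa\ Pb\ Pc]=P\,[a\ b\ c]$, and the multiplicativity of the determinant together give the claim immediately. The paper itself supplies no proof of this lemma --- it is quoted as a known fact from the cited linear algebra text --- so there is nothing to compare against; your three-line derivation is the standard one and would serve as a complete proof.
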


\begin{lem}\label{lem orth}
Let $U$ be a $3\times 3$ orthogonal matrix, and
$a, b\in \mathbb{R}^3$ be two column vectors. Then we have
\begin{align*}
\|Ua\times Ub\|=\|a\times b\|.
\end{align*}
\end{lem}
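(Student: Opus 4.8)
The plan is to reduce the statement to the well-known Lagrange identity $\|a\times b\|^2 = \|a\|^2\|b\|^2 - (a\cdot b)^2$, valid for all $a,b\in\mathbb{R}^3$. Applying this identity to the pair $Ua,\,Ub$ gives
\begin{align*}
\|Ua\times Ub\|^2 = \|Ua\|^2\,\|Ub\|^2 - (Ua\cdot Ub)^2 .
\end{align*}
Since $U$ is orthogonal, $U^\mathrm{T}U = I$, so for any $x,y\in\mathbb{R}^3$ one has $Ux\cdot Uy = x^\mathrm{T}U^\mathrm{T}Uy = x^\mathrm{T}y = x\cdot y$, and in particular $\|Ux\| = \|x\|$. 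Substituting $x=a$ and $y=b$ into the displayed expression turns its right-hand side into $\|a\|^2\|b\|^2 - (a\cdot b)^2 = \|a\times b\|^2$, and taking nonnegative square roots yields $\|Ua\times Ub\| = \|a\times b\|$.

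Concretely I would first record the Lagrange identity, then note the two immediate consequences of orthogonality (that $U$ preserves the Euclidean inner product, hence the norm), and finally chain the two equalities. An equivalent geometric route is to observe that $U$ preserves lengths and angles together with the formula $\|a\times b\| = \|a\|\,\|b\|\sin\angle(a,b)$; another is to use $Ua\times Ub = (\mathrm{det}\, U)\,U(a\times b)$ combined with $|\mathrm{det}\, U| = 1$ and $\|Uv\| = \|v\|$.

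There is essentially no obstacle here; the only point needing the slightest care is that an orthogonal $U$ need not be a rotation ($\mathrm{det}\, U$ may equal $-1$), so an argument phrased as ``$U$ commutes with the cross product'' is false. The inner-product computation above is preferable precisely because it never invokes $\mathrm{det}\, U$, so it applies uniformly to both orientation-preserving and orientation-reversing orthogonal matrices.
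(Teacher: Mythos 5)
Your proof is correct and complete. The paper itself gives no argument here --- it states only that ``the proof is trivial and we omit here'' --- so there is no authorial approach to compare against; your reduction to the Lagrange identity $\|a\times b\|^2=\|a\|^2\|b\|^2-(a\cdot b)^2$ together with the fact that $U^\mathrm{T}U=I$ preserves inner products is exactly the kind of short verification the authors presumably had in mind. Your cautionary remark that $Ua\times Ub=(\mathrm{det}\,U)\,U(a\times b)$, so that $U$ commutes with the cross product only up to the sign of $\mathrm{det}\,U$, is well taken and is precisely why the inner-product route is the cleanest; note that the norm identity still holds in the orientation-reversing case since $|\mathrm{det}\,U|=1$.
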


\begin{proof}
The proof is trivial and we omit here.
\end{proof}

\begin{lem}\label{lem k}
Let $\Delta$ be a $3\times 3$ diagonal matrix $\Delta=\mathrm{diag}\{\delta_1,\delta_2,\delta_3\}\ (\delta_1\geqslant\delta_2\geqslant\delta_3>0)$,
and $a, b\in \mathbb{R}^3$ be two column vectors. Then we have
\begin{align} \label{ieq1}
\delta_3\|a\| \leqslant \|\Delta a\| \leqslant \delta_1\|a\|,
\end{align}
\begin{align}\label{ieq2}
\delta_3^2\|a\times b\| \leqslant \|\Delta a\times \Delta b\| \leqslant \delta_1^2\|a\times b\|.
\end{align}
\end{lem}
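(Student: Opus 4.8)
The plan is to prove the two chains of inequalities separately, both by the same elementary device: controlling $\Delta$ on each coordinate. For \eqref{ieq1}, write $a=(a_1,a_2,a_3)^{\mathrm T}$, so that $\Delta a=(\delta_1 a_1,\delta_2 a_2,\delta_3 a_3)^{\mathrm T}$ and hence $\|\Delta a\|^2=\sum_{i=1}^3\delta_i^2 a_i^2$. Since $\delta_3\leqslant\delta_i\leqslant\delta_1$ for every $i$, one has $\delta_3^2\sum a_i^2\leqslant\sum\delta_i^2 a_i^2\leqslant\delta_1^2\sum a_i^2$, and taking square roots (all quantities nonnegative) gives $\delta_3\|a\|\leqslant\|\Delta a\|\leqslant\delta_1\|a\|$. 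This is immediate.

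For \eqref{ieq2}, the key observation is an explicit formula for $\Delta a\times\Delta b$ in terms of $a\times b$. Writing $a\times b=(c_1,c_2,c_3)^{\mathrm T}$ with $c_1=a_2 b_3-a_3 b_2$, etc., a direct computation of the cross product of $(\delta_1 a_1,\delta_2 a_2,\delta_3 a_3)$ and $(\delta_1 b_1,\delta_2 b_2,\delta_3 b_3)$ shows that its $i$-th component equals $\delta_j\delta_k c_i$, where $\{i,j,k\}=\{1,2,3\}$; for instance the first component is $\delta_2\delta_3(a_2 b_3-a_3 b_2)=\delta_2\delta_3 c_1$. Therefore $\|\Delta a\times\Delta b\|^2=(\delta_2\delta_3)^2 c_1^2+(\delta_1\delta_3)^2 c_2^2+(\delta_1\delta_2)^2 c_3^2$. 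Now each product of two of the $\delta$'s lies between $\delta_3^2=\delta_3\cdot\delta_3$ (the smallest, since $\delta_3$ is the least) and $\delta_1^2=\delta_1\cdot\delta_1$ (the largest). Hence $\delta_3^4\sum c_i^2\leqslant\|\Delta a\times\Delta b\|^2\leqslant\delta_1^4\sum c_i^2$, and taking square roots yields \eqref{ieq2}.

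The only step requiring the slightest care is verifying the coordinate formula $(\Delta a\times\Delta b)_i=\delta_j\delta_k(a\times b)_i$; this is a one-line check per component, using the diagonal structure of $\Delta$ so that each entry of $\Delta a$ and $\Delta b$ is simply a scalar multiple of the corresponding entry of $a$ and $b$. There is no genuine obstacle here — the lemma is a purely computational fact, and its role is to supply, via the singular value decomposition $P=UDV^{\mathrm T}$ of Proposition \ref{SVD} and the orthogonal invariance in Lemma \ref{lem orth}, two-sided bounds $\|Pa\|\asymp\|a\|$ and $\|Pa\times Pb\|\asymp\|a\times b\|$ with constants depending only on the extreme singular values of $P$. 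These bounds are exactly what is needed in Appendix \ref{Proof} to compare $\kappa_r$ with $\kappa_v$ under the equivalence transformation.
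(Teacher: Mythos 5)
Your proposal is correct and follows essentially the same route as the paper's own proof: the coordinate-wise bound $\delta_3\leqslant\delta_i\leqslant\delta_1$ for \eqref{ieq1}, and the explicit component formula $(\Delta a\times\Delta b)_i=\delta_j\delta_k(a\times b)_i$ followed by bounding each product $\delta_j\delta_k$ between $\delta_3^2$ and $\delta_1^2$ for \eqref{ieq2}. No differences worth noting.
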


\begin{proof}
Suppose that $a=(a_1, a_2, a_3)^\mathrm{T}$, and $b=(b_1, b_2, b_3)^\mathrm{T}$.
Noting that $\delta_1\geqslant\delta_2\geqslant\delta_3$, we have
\begin{align*}
\|\Delta a\|
=\left\|\begin{pmatrix}\delta_1 a_1, \delta_2 a_2, \delta_3 a_3\end{pmatrix} ^\mathrm{T} \right\|
=\sqrt{\sum_{i=1}^3 (\delta_i a_i)^2}
\leqslant \delta_1 \sqrt{\sum_{i=1}^3 a_i^2}
=\delta_1 \|a\|.
\end{align*}
Similarly, we have
$
\|\Delta a\| \geqslant \delta_3 \|a\|.
$
Thereby, inequality (\ref{ieq1}) is proved.
\par
To prove inequality (\ref{ieq2}), we have
\begin{align*}
\|\Delta a \times \Delta b \|
&=\left\|\begin{pmatrix}\delta_1 a_1\\\delta_2 a_2\\\delta_3 a_3\end{pmatrix}
\times \begin{pmatrix}\delta_1 b_1\\\delta_2 b_2\\\delta_3 b_3\end{pmatrix}\right\|
=\left\|\begin{pmatrix}\delta_2\delta_3 \begin{vmatrix}a_2 & b_2\\a_3 & b_3\end{vmatrix},&
\delta_1\delta_3 \begin{vmatrix}a_3 & b_3\\a_1 & b_1\end{vmatrix},&
\delta_1\delta_2 \begin{vmatrix}a_1 & b_1\\a_2 & b_2\end{vmatrix}
\end{pmatrix}^\mathrm{T}\right\|
\nonumber\\
&\leqslant\delta_1^2\left\|\begin{pmatrix}\begin{vmatrix}a_2 & b_2\\a_3 & b_3\end{vmatrix},&
\begin{vmatrix}a_3 & b_3\\a_1 & b_1\end{vmatrix},&
\begin{vmatrix}a_1 & b_1\\a_2 & b_2\end{vmatrix}
\end{pmatrix}^\mathrm{T}\right\|
=\delta_1^2\left\|a\times b\right\|.
\end{align*}
Similarly, we have
$
\|\Delta a \times \Delta b \|\geqslant\delta_3^2\left\|a\times b\right\|.
$
\end{proof}

Now, we proceed to prove Theorem \ref{thm k1}.
\begin{proof}[Proof of Theorem \ref{thm k1}]
Recall that $A=P^{-1}BP$, where $P$ is a $3\times 3$ real invertible matrix. Using Proposition \ref{SVD}, we obtain a singular value decomposition of $P$, namely,
\begin{align*}
P=U\Delta V^\mathrm{T},
\end{align*}
where $U$ and $V$ are $3\times 3$ orthogonal matrices, and $\Delta=\mathrm{diag}\{\delta_1, \delta_2, \delta_3\}\ ( \delta_1\geqslant\delta_2\geqslant\delta_3>0)$. Since $\kappa_v(t)$ is curvature of trajectory $v(t)$ of the system $\dot{v}(t)=Bv(t)$, we have
\begin{align*}
\kappa_v(t)=\frac{\|\dot v\times\ddot v\|}{\|\dot v\|^3}
=\frac{\|(P\dot r)\times(P\ddot r)\|}{\|P\dot r\|^3}
=\frac{\|(U\Delta V^\mathrm{T}\dot r)\times(U\Delta V^\mathrm{T}\ddot r)\|}{\|U\Delta V^\mathrm{T}\dot r\|^3}.
\end{align*}
Noting that $U$ is an orthogonal matrix, we have $\|U\Delta V^\mathrm{T}\dot r\|=\|\Delta V^\mathrm{T}\dot r\|$, and
$\|(U\Delta V^\mathrm{T}\dot r)\times(U\Delta V^\mathrm{T}\ddot r)\|
=\|\Delta V^\mathrm{T}\dot r\times\Delta V^\mathrm{T}\ddot r\|$,
by using Lemma \ref{lem orth}. Thus
\begin{align*}
\kappa_v(t)
=\frac{\|\Delta V^\mathrm{T}\dot r\times\Delta V^\mathrm{T}\ddot r\|}{\|\Delta V^\mathrm{T}\dot r\|^3}.
\end{align*}
Next, using Lemma \ref{lem k}, we obtain two inequalities
\begin{align*}
\delta_3^2\|V^\mathrm{T}\dot r\times V^\mathrm{T}\ddot r\|
\leqslant \|\Delta V^\mathrm{T}\dot r\times\Delta V^\mathrm{T}\ddot r\|
\leqslant \delta_1^2\|V^\mathrm{T}\dot r\times V^\mathrm{T}\ddot r\|,
\end{align*}
\begin{align*}
\delta_3\|V^\mathrm{T}\dot r\|
\leqslant \|\Delta V^\mathrm{T}\dot r\|
\leqslant \delta_1\|V^\mathrm{T}\dot r\|.
\end{align*}
Therefore we have
\begin{align*}
\frac{\delta_3^2\|V^\mathrm{T}\dot r\times V^\mathrm{T}\ddot r\| }{\delta_1^3\|V^\mathrm{T}\dot r\|^3}
\leqslant \kappa_v(t)
\leqslant \frac{\delta_1^2\|V^\mathrm{T}\dot r\times V^\mathrm{T}\ddot r\|}{\delta_3^3\|V^\mathrm{T}\dot r\|^3}.
\end{align*}
Because $V$ is an orthogonal matrix, we have $\|V^\mathrm{T}\dot r\|=\|\dot r\|$, and
$\|V^\mathrm{T}\dot r\times V^\mathrm{T}\ddot r\|=\|\dot r\times \ddot r\|$.
Hence we obtain
\begin{align*}
\frac{\delta_3^2}{\delta_1^3}\,\kappa_r(t)
\leqslant \kappa_v(t)
\leqslant \frac{\delta_1^2}{\delta_3^3}\,\kappa_r(t).
\end{align*}
It follows that
\begin{align*} 
\lim\limits_{t\to+\infty}\kappa_v(t)=0 \iff &\lim\limits_{t\to+\infty}\kappa_r(t)=0, \nonumber\\
\lim\limits_{t\to+\infty}\kappa_v(t)=+\infty \iff &\lim\limits_{t\to+\infty}\kappa_r(t)=+\infty, \\
\exists C_1, C_2>0, \ \exists T>0, \ \mathrm{s.t.},&\ \kappa_v(t)\in\left[C_1, C_2\right], \forall t>T \nonumber\\
\iff \exists \tilde{C_1}, \tilde{C_2}>0, \ \exists \tilde{T}>0, \ \mathrm{s.t.},&\ \kappa_r(t)\in\left[\tilde{C_1}, \tilde{C_2}\right], \forall t>\tilde{T}.
\nonumber
\end{align*}
This completes the proof of Theorem \ref{thm k1}.
\end{proof}

Next we give the proof of Theorem \ref{thm k2}.
\begin{proof}[Proof of Theorem \ref{thm k2}]
Recall that $A=P^{-1}BP$, and we suppose that $P$ has the singular value decomposition $P=U\Delta V^\mathrm{T}=U\mathrm{diag}\{\delta_1, \delta_2, \delta_3\}V^\mathrm{T}\ ( \delta_1\geqslant\delta_2\geqslant\delta_3>0)$.
Since $\tau_v(t)$ is torsion of trajectory $v(t)$ of the system $\dot{v}(t)=Bv(t)$, using Lemma \ref{prop det}, we have
\begin{align*}
\tau_v(t)=\frac{\left(\dot{v}(t),\ddot{v}(t),\dddot{v}(t)\right)}{\left\|\dot{v}(t)\times\ddot{v}(t)\right\|^2}
=\frac{\left(P\dot{r}(t),P\ddot{r}(t),P\dddot{r}(t)\right)}{\left\|\dot{v}(t)\times\ddot{v}(t)\right\|^2}
=\frac{(\mathrm{det}\,P)\left(\dot{r}(t),\ddot{r}(t),\dddot{r}(t)\right)}{\left\|\dot{v}(t)\times\ddot{v}(t)\right\|^2}.
\end{align*}
According to the proof of Theorem \ref{thm k1}, we have
\begin{align*}
\delta_3^2\left\|\dot{r}(t)\times\ddot{r}(t)\right\|
\leqslant \left\|\dot{v}(t)\times\ddot{v}(t)\right\|
\leqslant \delta_1^2\left\|\dot{r}(t)\times\ddot{r}(t)\right\|.
\end{align*}
Thus, we see that
if $\mathrm{det}\,P>0$, then $\tau_v(t)$ and $\tau_r(t)$ are of same sign;
if $\mathrm{det}\,P<0$, then $\tau_v(t)$ and $\tau_r(t)$ are of opposite sign.
Hence we have
\begin{align*}
\frac{|\mathrm{det}\,P|}{\delta_1^4}|\tau_r(t)|
\leqslant |\tau_v(t)|
\leqslant \frac{|\mathrm{det}\,P|}{\delta_3^4}|\tau_r(t)|.
\end{align*}
It follows that
\begin{align*}
\lim\limits_{t\to+\infty}\tau_v(t)=0 \iff &\lim\limits_{t\to+\infty}\tau_r(t)=0, \nonumber\\
\lim\limits_{t\to+\infty}\tau_v(t)=\infty \iff &\lim\limits_{t\to+\infty}\tau_r(t)=\infty, \\
\exists C_1, C_2>0, \ \exists T>0, \ \mathrm{s.t.},&\ \left|\tau_v(t)\right|\in\left[C_1, C_2\right], \forall t>T. \nonumber\\
\iff \exists \tilde{C_1}, \tilde{C_2}>0, \ \exists \tilde{T}>0, \ \mathrm{s.t.},&\ \left|\tau_r(t)\right|\in\left[\tilde{C_1}, \tilde{C_2}\right], \forall t>\tilde{T}.\nonumber
\end{align*}
This completes the proof of Theorem \ref{thm k2}.
\end{proof}

\section{Curvatures of Two-Dimensional Systems}\label{cal 2-dim}
We give the calculations of curvature of two-dimensional systems in three cases respectively.

\subsection{Case 1}

\

In the case of
\begin{align*}
A=\begin{pmatrix}\lambda_1&0\\0&\lambda_2\end{pmatrix}\
(\lambda_1, \lambda_2\in\mathbb{R}),
\end{align*}
by Proposition \ref{ODE}, we know that $r(t)=\mathrm{e}^{tA}r(0)$ is the solution of $\dot{r}(t)=Ar(t)$, where $r(0)=(x_0,y_0)^\mathrm{T}$ is the initial value of $r(t)$.
The first and second derivative of $r(t)$ are
\begin{align*}
\dot{r}(t)=Ar(t)
=\begin{pmatrix}\lambda_1&0\\0&\lambda_2\end{pmatrix}\begin{pmatrix}x(t)\\y(t)\end{pmatrix}
=\begin{pmatrix}\lambda_1 x\\\lambda_2 y\end{pmatrix},
\end{align*}
and
\begin{align*}
\ddot{r}(t)=A\dot{r}(t)
=\begin{pmatrix}\lambda_1&0\\0&\lambda_2\end{pmatrix}\begin{pmatrix}\lambda_1 x\\\lambda_2 y\end{pmatrix}
=\begin{pmatrix}\lambda_1^2 x\\\lambda_2^2 y\end{pmatrix},
\end{align*}
respectively.
\par
If $\lambda_1=\lambda_2=0$, namely, $A=0$, then every trajectory $r(t)=\mathrm{e}^{tA}r(0)=r(0)$ is a constant point, we have a convention that $\kappa(t)\equiv0$.
If $\lambda_1^2+\lambda_2^2\neq0$, then the square of curvature $\kappa(t)$ of curve $r(t)$ is
\begin{align}\label{k square 2-dim (1)}
\kappa^2(t)
=\kappa_s^2(t)
=\left( \frac{\dot x\ddot y-\ddot x\dot y}{ (\dot x^2+\dot y^2)^{3/2}} \right) ^2
=\frac{(\lambda_1x \cdot \lambda_2^2y - \lambda_1^2 x \cdot \lambda_2y)^2}
{\left\{(\lambda_1x)^2+(\lambda_2y)^2\right\}^3}
=\frac{\left\{\lambda_1 \lambda_2(\lambda_2-\lambda_1)xy\right\}^2}
{\left\{(\lambda_1x)^2+(\lambda_2y)^2\right\}^3},
\end{align}
where $\kappa_s(t)=\frac{\dot x(t)\ddot y(t)-\ddot x(t)\dot y(t)}{ (\dot x^2(t)+\dot y^2(t))^{3/2}}$ is the signed curvature of plane curve $r(t)$, and $\kappa_s(t)=\pm \kappa(t)$ (cf.\cite{Carmo}).

Noting that
\begin{align*}
r(t)=\mathrm{e}^{tA}r(0)
=\begin{pmatrix}\mathrm{e}^{\lambda_1t}&0
\\[0.5ex] 0&\mathrm{e}^{\lambda_2t}
\end{pmatrix}
\begin{pmatrix}x_0\\[0.5ex] y_0\end{pmatrix}
=\begin{pmatrix}x_0\mathrm{e}^{\lambda_1t}
\\[0.5ex] y_0\mathrm{e}^{\lambda_2t}
\end{pmatrix},
\end{align*}
we have
\begin{align}\label{r(t) 2-dim (1)}
\left\{
\begin{aligned}
x(t) &=x_0\mathrm{e}^{\lambda_1t},\\
y(t) &=y_0\mathrm{e}^{\lambda_2t}.
\end{aligned}\right.
\end{align}
By substituting (\ref{r(t) 2-dim (1)}) into (\ref{k square 2-dim (1)}), we obtain
\begin{align*}
\kappa^2(t)
=\frac{\left\{\lambda_1 \lambda_2(\lambda_2-\lambda_1)x_0y_0\right\}^2 \cdot \mathrm{e}^{2(\lambda_1+\lambda_2)t}}
{\left\{(\lambda_1x_0)^2 \cdot \mathrm{e}^{2\lambda_1t}+(\lambda_2y_0)^2 \cdot \mathrm{e}^{2\lambda_2t}\right\}^3}.
\end{align*}

\subsection{Case 2}

\

In the case of
\begin{align*}
A=\begin{pmatrix}a&b\\-b&a\end{pmatrix}\
(a, b\in\mathbb{R},\ b>0),
\end{align*}
the first and second derivative of $r(t)$ are
\begin{align*}
\dot{r}(t)&=Ar(t)=(ax+by,~ -bx+ay)^\mathrm{T},\\
\ddot{r}(t)&=A^2r(t)
=((a^2-b^2)x+2aby,~ -2abx+(a^2-b^2)y)^\mathrm{T},
\end{align*}
respectively.
Since
\begin{align*}
\mathrm{e}^{tA}
=\mathrm{e}^{at}
\begin{pmatrix}
\cos{bt} & \sin{bt}
\\[0.5ex] -\sin{bt} & \cos{bt}
\end{pmatrix},
\end{align*}
we have
\begin{align*}
\left\{
\begin{aligned}
x(t) &=\mathrm{e}^{at}(x_0 \cos{bt} + y_0 \sin{bt}),\\
y(t) &=\mathrm{e}^{at}(-x_0 \sin{bt} + y_0 \cos{bt}).
\end{aligned}\right.
\end{align*}
Thereby,
\begin{align*} 
x^2(t)+y^2(t) = (x_0^2+y_0^2)\mathrm{e}^{2at}.
\end{align*}
We obtain
the square of curvature $\kappa(t)$ of curve $r(t)$ as
\begin{align*}
\kappa^2(t)
=\left( \frac{\dot x\ddot y-\ddot x\dot y}{ (\dot x^2+\dot y^2)^{3/2}} \right) ^2
=\frac{b^2}{(a^2+b^2)(x^2+y^2)}
=\frac{b^2}{(a^2+b^2)(x_0^2+y_0^2)\cdot \mathrm{e}^{2at}}.
\end{align*}

\subsection{Case 3}

\

In the case of
\begin{align*}
A=\begin{pmatrix}\lambda&1\\0&\lambda\end{pmatrix}\
(\lambda\in\mathbb{R}).
\end{align*}
the first and second derivative of $r(t)$ are
\begin{align*}
\dot{r}(t)&=Ar(t)=(\lambda x+y,~ \lambda y)^\mathrm{T},\\
\ddot{r}(t)&=A^2r(t)
=(\lambda^2 x+2\lambda y,~ \lambda^2 y))^\mathrm{T},
\end{align*}
respectively.
Since
\begin{align*}
\mathrm{e}^{tA}
=\mathrm{e}^{\lambda t}
\begin{pmatrix}1&t
\\[0.5ex] 0&1
\end{pmatrix},
\end{align*}
we have
\begin{align*}
\left\{
\begin{aligned}
x(t) &=(x_0+y_0 t)\mathrm{e}^{\lambda t},\\
y(t) &=y_0\mathrm{e}^{\lambda t}.
\end{aligned}\right.
\end{align*}
Hence the square of curvature $\kappa(t)$ of curve $r(t)$ is
\begin{align*}
\kappa^2(t)
&=\left( \frac{\dot x\ddot y-\ddot x\dot y}{ (\dot x^2+\dot y^2)^{3/2}} \right) ^2
=\frac{ \lambda^4 y^4}
{\left\{(\lambda x+y)^2+(\lambda y)^2\right\}^3} \\
&=\frac{ \lambda^4 y_0^4 \cdot \mathrm{e}^{4\lambda t}}
{\left\{(\lambda x_0+\lambda y_0 \cdot t+y_0)^2+(\lambda y_0)^2\right\}^3 \cdot \mathrm{e}^{6\lambda t} }
=\frac{ \lambda^4 y_0^4 }
{g(t) \cdot \mathrm{e}^{2\lambda t} },
\end{align*}
where $g(t)$ is a polynomial in $t$. If $\lambda\neq0$, then $g(t)= \left( \lambda^6 y_0^6 \right) \cdot t^{6} + \sum_{i=0}^{5} a_i t^i$ is a polynomial of degree 6 in $t$; if $\lambda=0$, then $g(t)=y_0^6$ is a constant.

\section{Curvatures of Three-Dimensional Systems} \label{cal 3-dim}

We give the calculations of curvature and torsion of three-dimensional systems in four cases respectively.

\subsection{Case 1}

\

In the case of
\begin{align*}
A=\begin{pmatrix}\lambda_1&0&0\\0&\lambda_2&0\\0&0&\lambda_3\end{pmatrix}\
(\lambda_1, \lambda_2, \lambda_3\in\mathbb{R}),
\end{align*}
by Proposition \ref{ODE}, we know that $r(t)=\mathrm{e}^{tA}r(0)$ is the solution of $\dot{r}(t)=Ar(t)$, where $r(0)=(x_0,y_0,z_0)^\mathrm{T}$ is the initial value of $r(t)$.
The first, second and third derivative of $r(t)$ are
\begin{align*}
\dot{r}(t)=\begin{pmatrix}\lambda_1 x,~ \lambda_2 y,~ \lambda_3 z\end{pmatrix} ^\mathrm{T}, \quad
\ddot{r}(t)=\begin{pmatrix}\lambda_1^2 x,~ \lambda_2^2 y,~ \lambda_3^2 z\end{pmatrix} ^\mathrm{T}, \quad
\dddot{r}(t)=\begin{pmatrix}\lambda_1^3 x,~ \lambda_2^3 y,~ \lambda_3^3 z\end{pmatrix} ^\mathrm{T},
\end{align*}
respectively.
The norm of $\dot{r}(t)$ and the vector product $\dot{r}(t)\times\ddot{r}(t)$ are
\begin{align*}
\left\|\dot{r}(t)\right\|
=\sqrt{(\lambda_1 x)^2+(\lambda_2 y)^2+(\lambda_3 z)^2},
\end{align*}
and
\begin{align*}
\left\|\dot{r}(t)\times\ddot{r}(t)\right\|
&=\left\|\begin{pmatrix}
\begin{vmatrix}\lambda_2 y & \lambda_2^2 y\\[1.2ex] \lambda_3 z & \lambda_3^2 z\end{vmatrix},&
\begin{vmatrix}\lambda_3 z & \lambda_3^2 z\\[1.2ex] \lambda_1 x & \lambda_1^2 x\end{vmatrix},&
\begin{vmatrix}\lambda_1 x & \lambda_1^2 x\\[1.2ex] \lambda_2 y & \lambda_2^2 y\end{vmatrix}
\end{pmatrix}^\mathrm{T}\right\| \nonumber\\
&=\sqrt{\left\{\lambda_2 \lambda_3 (\lambda_3-\lambda_2) yz\right\}^2
+\left\{\lambda_1 \lambda_3 (\lambda_1-\lambda_3) xz\right\}^2
+\left\{\lambda_1 \lambda_2 (\lambda_2-\lambda_1) xy\right\}^2}.
\end{align*}
The scalar triple product $\left(\dot{r}(t),\ddot{r}(t),\dddot{r}(t)\right)$ is
\begin{align*}
\left(\dot{r}(t),\ddot{r}(t),\dddot{r}(t)\right)
=\begin{vmatrix}
\lambda_1x & \lambda_1^2x & \lambda_1^3x \\[0.5ex]
\lambda_2y & \lambda_2^2y & \lambda_2^3y \\[0.5ex]
\lambda_3z & \lambda_3^2z & \lambda_3^3z
\end{vmatrix}
=\lambda_1\lambda_2\lambda_3(\lambda_2-\lambda_1)(\lambda_3-\lambda_1)(\lambda_3-\lambda_2) xyz.
\end{align*}
\par
If $\lambda_1=\lambda_2=\lambda_3=0$, namely, $A=0$, then every trajectory $r(t)=\mathrm{e}^{tA}r(0)=r(0)$ is a constant point, we have conventions that $\kappa(t)\equiv0$ and $\tau(t)\equiv0$.
If $\lambda_1^2+\lambda_2^2+\lambda_3^2\neq 0$, then the square of curvature $\kappa(t)$ of curve $r(t)$ is
\begin{align}\label{k square}
\kappa^2(t)
=\left( \frac{\left\|\dot{r}(t)\times\ddot{r}(t)\right\|}{\left\|\dot{r}(t)\right\|^3} \right)^2
=\frac{\left\{\lambda_2 \lambda_3 (\lambda_3-\lambda_2) yz\right\}^2
+\left\{\lambda_1 \lambda_3 (\lambda_1-\lambda_3) xz\right\}^2
+\left\{\lambda_1 \lambda_2 (\lambda_2-\lambda_1) xy\right\}^2}
{\{(\lambda_1 x)^2+(\lambda_2 y)^2+(\lambda_3 z)^2\}^3}.
\end{align}
\par
If $[\lambda_2 \lambda_3 (\lambda_3-\lambda_2) ]^2
+[\lambda_1 \lambda_3 (\lambda_1-\lambda_3) ]^2
+[\lambda_1 \lambda_2 (\lambda_2-\lambda_1) ]^2=0$,
then $\left\|\dot{r}(t)\times\ddot{r}(t)\right\|\equiv0$ and we have $\tau(t)\equiv0$.
If $\left\|\dot{r}(t)\times\ddot{r}(t)\right\|\neq0$, then the torsion of curve $r(t)$ is
\begin{align}\label{tau}
\tau(t)
=\frac{\left(\dot{r}(t),\ddot{r}(t),\dddot{r}(t)\right)}{\left\|\dot{r}(t)\times\ddot{r}(t)\right\|^2}
=\frac{\lambda_1\lambda_2\lambda_3(\lambda_2-\lambda_1)(\lambda_3-\lambda_1)(\lambda_3-\lambda_2) xyz}
{\left\{\lambda_2 \lambda_3 (\lambda_3-\lambda_2) yz\right\}^2
+\left\{\lambda_1 \lambda_3 (\lambda_1-\lambda_3) xz\right\}^2
+\left\{\lambda_1 \lambda_2 (\lambda_2-\lambda_1) xy\right\}^2}.
\end{align}
Noting that
\begin{align*}
r(t)=\mathrm{e}^{tA}r(0)
=\begin{pmatrix}\mathrm{e}^{\lambda_1t}&0&0
\\[0.5ex] 0&\mathrm{e}^{\lambda_2t}&0
\\[0.5ex] 0&0&\mathrm{e}^{\lambda_3t}
\end{pmatrix}
\begin{pmatrix}x_0\\[0.5ex] y_0\\[0.5ex] z_0\end{pmatrix}
=\begin{pmatrix}x_0\mathrm{e}^{\lambda_1t}
\\[0.5ex] y_0\mathrm{e}^{\lambda_2t}
\\[0.5ex] z_0\mathrm{e}^{\lambda_3t}\end{pmatrix},
\end{align*}
we have
\begin{align}\label{r(t)}
\left\{
\begin{aligned}
x(t) &=x_0\mathrm{e}^{\lambda_1t},\\
y(t) &=y_0\mathrm{e}^{\lambda_2t},\\
z(t) &=z_0\mathrm{e}^{\lambda_3t}.
\end{aligned}\right.
\end{align}
By substituting (\ref{r(t)}) into (\ref{k square}) and (\ref{tau}), we obtain
\begin{align*}
\kappa^2(t)
=& \left\{
\left[\lambda_2 \lambda_3 (\lambda_3-\lambda_2) y_0 z_0\right]^2 \cdot \mathrm{e}^{2(\lambda_2+\lambda_3)t}
+\left[\lambda_1 \lambda_3 (\lambda_1-\lambda_3) x_0 z_0\right]^2 \cdot \mathrm{e}^{2(\lambda_1+\lambda_3)t}
\right.\nonumber\\& \left.
+\left[\lambda_1 \lambda_2 (\lambda_2-\lambda_1) x_0 y_0\right]^2 \cdot \mathrm{e}^{2(\lambda_1+\lambda_2)t}
\right\}\left.\middle/
{\left\{(\lambda_1 x_0)^2 \mathrm{e}^{2\lambda_1t}
+(\lambda_2 y_0)^2 \mathrm{e}^{2\lambda_2t}
+(\lambda_3 z_0)^2 \mathrm{e}^{2\lambda_3t}\right\}^3}
,\right.
\end{align*}
and
\begin{align*}
\tau(t)
=& \left.
{\lambda_1\lambda_2\lambda_3(\lambda_2-\lambda_1)(\lambda_3-\lambda_1)(\lambda_3-\lambda_2) x_0y_0z_0 \cdot
\mathrm{e}^{(\lambda_1+\lambda_2+\lambda_3)t}}
 \middle/
\Big\{\left[\lambda_2 \lambda_3 (\lambda_3-\lambda_2) y_0z_0\right]^2 \cdot \mathrm{e}^{2(\lambda_2+\lambda_3)t}
\right. \nonumber\\
&+\left[\lambda_1 \lambda_3 (\lambda_1-\lambda_3) x_0z_0\right]^2 \cdot \mathrm{e}^{2(\lambda_1+\lambda_3)t}
+\left[\lambda_1 \lambda_2 (\lambda_2-\lambda_1) x_0y_0\right]^2 \cdot \mathrm{e}^{2(\lambda_1+\lambda_2)t}
\Big\}. \nonumber\\
\end{align*}

\subsection{Case 2}

\

In the case of
\begin{align*}
A=
\begin{pmatrix}a&b&0\\-b&a&0\\0&0&\lambda_3\end{pmatrix}\
(a, b, \lambda_3\in\mathbb{R},\ b>0),
\end{align*}
the first, second and third derivative of $r(t)$ are
\begin{align*}
\dot{r}(t)&=Ar(t)=(ax+by,~ -bx+ay,~ \lambda_3 z)^\mathrm{T},\\
\ddot{r}(t)&=A^2r(t)
=((a^2-b^2)x+2aby,~ -2abx+(a^2-b^2)y,~ \lambda_3^2 z)^\mathrm{T},\\
\dddot{r}(t)&=A^3r(t)
=(a(a^2-3b^2)x+b(3a^2-b^2)y,~ -b(3a^2-b^2)x+a(a^2-3b^2)y,~ \lambda_3^3 z)^\mathrm{T},
\end{align*}
respectively.
Noting that
\begin{align*}
\mathrm{e}^{tA}
=\begin{pmatrix}
\mathrm{e}^{at}\cos{bt} & \mathrm{e}^{at}\sin{bt} & 0
\\[0.5ex] -\mathrm{e}^{at}\sin{bt} & \mathrm{e}^{at}\cos{bt} & 0
\\[0.5ex] 0 & 0 & \mathrm{e}^{\lambda_3 t}
\end{pmatrix},
\end{align*}
we have
\begin{align*}
\left\{
\begin{aligned}
x^2(t)+y^2(t) &= (x_0^2+y_0^2)\mathrm{e}^{2at},\\
z(t) &= z_0\mathrm{e}^{\lambda_3 t}.
\end{aligned}\right.
\end{align*}
Hence the square of curvature $\kappa(t)$ of curve $r(t)$ is
\begin{align*}
\kappa^2(t)
=& \left( \frac{\left\|\dot{r}(t)\times\ddot{r}(t)\right\|}{\left\|\dot{r}(t)\right\|^3} \right)^2
= \frac{ (a^2 + b^2)(x^2 + y^2) \cdot
  \{ \lambda_3^2 [(a-\lambda_3)^2+b^2] z^2
  + b^2(a^2 + b^2)(x^2 + y^2) \} }
 { \{(a^2 + b^2)(x^2 + y^2)+(\lambda_3 z)^2 \}^3 } \\
=& \frac{ (a^2 + b^2)(x_0^2 + y_0^2) \cdot
  \{ \lambda_3^2 [(a-\lambda_3)^2+b^2]z_0^2 \cdot \mathrm{e}^{2(a+\lambda_3)t}
  + b^2(a^2 + b^2)(x_0^2 + y_0^2) \cdot \mathrm{e}^{4at} \} }
 { \{(a^2 + b^2)(x_0^2 + y_0^2) \cdot \mathrm{e}^{2at}
 +(\lambda_3 z_0)^2 \cdot \mathrm{e}^{2\lambda_3 t} \}^3 },
\end{align*}
and the torsion of curve $r(t)$ is
\begin{align*}
\tau(t)
&=\frac{\left(\dot{r}(t),\ddot{r}(t),\dddot{r}(t)\right)}
{\left\|\dot{r}(t)\times\ddot{r}(t)\right\|^2}
=\frac{ -b\lambda_3\{(a-\lambda_3)^2+b^2\} z }
{ \lambda_3^2 \{(a-\lambda_3)^2+b^2\} z^2
  + b^2(a^2 + b^2)(x^2+y^2) } \\
&=\frac{ -b\lambda_3\{(a-\lambda_3)^2+b^2\} z_0 }
{ \lambda_3^2 \{(a-\lambda_3)^2+b^2\} z_0^2 \cdot \mathrm{e}^{\lambda_3 t}
  + b^2(a^2 + b^2)(x_0^2+y_0^2) \cdot \mathrm{e}^{(2a-\lambda_3)t} }.
\end{align*}

\subsection{Case 3}

\

In the case of
\begin{align*}
A=\begin{pmatrix}\lambda_1&1&0\\0&\lambda_1&0\\0&0&\lambda_2\end{pmatrix}\
(\lambda_1, \lambda_2\in\mathbb{R}),
\end{align*}
the first, second and third derivative of $r(t)$ are
\begin{align*}
\dot{r}(t)&=Ar(t)
=(\lambda_1 x+y,~ \lambda_1 y,~ \lambda_2 z)^\mathrm{T},\\
\ddot{r}(t)&=A^2r(t)
=(\lambda_1^2 x+2\lambda_1 y,~ \lambda_1^2 y,~ \lambda_2^2 z)^\mathrm{T},\\
\dddot{r}(t)&=A^3r(t)
=(\lambda_1^3 x+3\lambda_1^2 y,~ \lambda_1^3 y,~ \lambda_2^3 z)^\mathrm{T},
\end{align*}
respectively.
Noting that
\begin{align*}
\mathrm{e}^{tA}
=\begin{pmatrix}\mathrm{e}^{\lambda_1t}&t\mathrm{e}^{\lambda_1t}&0
\\[0.5ex] 0&\mathrm{e}^{\lambda_1t}&0
\\[0.5ex] 0&0&\mathrm{e}^{\lambda_2t}
\end{pmatrix},
\end{align*}
we have
\begin{align*}
\left\{
\begin{aligned}
x(t) &=(x_0+y_0 t)\mathrm{e}^{\lambda_1t},\\
y(t) &=y_0\mathrm{e}^{\lambda_1t},\\
z(t) &=z_0\mathrm{e}^{\lambda_2t}.
\end{aligned}\right.
\end{align*}
Hence the square of curvature $\kappa(t)$ of curve $r(t)$ is
\begin{align*}
\kappa^2(t)
=& \left( \frac{\left\|\dot{r}(t)\times\ddot{r}(t)\right\|}{\left\|\dot{r}(t)\right\|^3} \right)^2
=\frac{\left\{\lambda_1 \lambda_2 (\lambda_2-\lambda_1)yz\right\}^2
+\left\{(\lambda_1^2 x - \lambda_1 \lambda_2 x +2\lambda_1 y - \lambda_2 y)\lambda_2 z\right\}^2
+\left(-\lambda_1^2 y^2\right)^2}
{\{(\lambda_1 x+y)^2+(\lambda_1 y)^2+(\lambda_2 z)^2\}^3} \nonumber\\
=&\Big\{
\big[\lambda_1 \lambda_2 (\lambda_2-\lambda_1) y_0 z_0 \big]^2 \cdot \mathrm{e}^{2(\lambda_1+\lambda_2)t}
\nonumber
\\&
+(\lambda_2 z_0)^2 \big[\lambda_1(\lambda_1 -\lambda_2 )y_0 \cdot t + (\lambda_1^2 x_0 - \lambda_1\lambda_2 x_0 + 2\lambda_1 y_0 -\lambda_2 y_0) \big] ^2 \cdot \mathrm{e}^{2(\lambda_1+\lambda_2)t}
\nonumber
\\&
+(\lambda_1^2 y_0^2)^2 \cdot \mathrm{e}^{4\lambda_1t}
\Big\}
\left.\middle/
{\Big\{ \big[ \lambda_1 y_0 \cdot t + ( \lambda_1 x_0 + y_0 ) \big]^2 \cdot \mathrm{e}^{2\lambda_1t}
+(\lambda_1 y_0)^2 \cdot \mathrm{e}^{2\lambda_1t}
+(\lambda_2 z_0)^2 \cdot \mathrm{e}^{2\lambda_2t}\Big\}^3}
\right..
\end{align*}

If $\lambda_1=\lambda_2=0$, then $\left\|\dot{r}(t)\times\ddot{r}(t)\right\|\equiv0$ and every trajectory $r(t)$ is a straight line, therefore we have $\tau(t)\equiv0$.
If $\lambda_1^2+\lambda_2^2 \neq 0$, then the torsion of curve $r(t)$ is
\begin{align*}
\tau(t)
=&\frac{\left(\dot{r}(t),\ddot{r}(t),\dddot{r}(t)\right)}
{\left\|\dot{r}(t)\times\ddot{r}(t)\right\|^2}
=\frac{ -\lambda_1^2 \lambda_2 (\lambda_1-\lambda_2)^2 y^2 z }
{\left\{\lambda_1 \lambda_2 (\lambda_2-\lambda_1)yz\right\}^2
+\left\{(\lambda_1^2 x - \lambda_1 \lambda_2 x +2\lambda_1 y - \lambda_2 y)\lambda_2 z\right\}^2
+\left(-\lambda_1^2 y^2\right)^2}  \nonumber\\
=&\left. { -\lambda_1^2 \lambda_2 (\lambda_1-\lambda_2)^2 y_0^2 z_0 \cdot \mathrm{e}^{(2\lambda_1+\lambda_2)t} }
\middle/\right.
\Big\{
\big[\lambda_1 \lambda_2 (\lambda_2-\lambda_1) y_0 z_0 \big]^2 \cdot \mathrm{e}^{2(\lambda_1+\lambda_2)t}
\nonumber
\\&
+(\lambda_2 z_0)^2 \big[\lambda_1(\lambda_1 -\lambda_2 )y_0 \cdot t + (\lambda_1^2 x_0 - \lambda_1\lambda_2 x_0 + 2\lambda_1 y_0 -\lambda_2 y_0) \big] ^2 \cdot \mathrm{e}^{2(\lambda_1+\lambda_2)t}
\nonumber
\\&
+(\lambda_1^2 y_0^2)^2 \cdot \mathrm{e}^{4\lambda_1t}
\Big\}.
\end{align*}

\subsection{Case 4}

\

In the case of
\begin{align*}
A=\begin{pmatrix}\lambda&1&0\\0&\lambda&1\\0&0&\lambda\end{pmatrix}\
(\lambda\in\mathbb{R}),
\end{align*}
the first, second and third derivative of $r(t)$ are
\begin{align*}
\dot{r}(t)&=Ar(t)=(\lambda x+y,~ \lambda y+z,~ \lambda z)^\mathrm{T},\\
\ddot{r}(t)&=A^2r(t)
=(\lambda^2 x+2\lambda y+z,~ \lambda^2 y+2\lambda z,~ \lambda^2 z)^\mathrm{T},\\
\dddot{r}(t)&=A^3r(t)
=(\lambda^3 x+3\lambda^2 y+3\lambda z,~ \lambda^3 y+3\lambda^2 z,~ \lambda^3 z)^\mathrm{T},
\end{align*}
respectively.
Noting that
\begin{align*}
\mathrm{e}^{tA}
=\mathrm{e}^{\lambda t}
\begin{pmatrix}
1 & t & \frac12 t^2
\\[0.5ex]0 & 1 & t
\\[0.5ex]0 & 0 & 1
\end{pmatrix},
\end{align*}
we have
\begin{align}\label{r(t) 4}
\left\{
\begin{aligned}
x(t)&=\left( x_0 + y_0 t + \frac12 z_0 t^2 \right) \mathrm{e}^{\lambda t},\\
y(t)&=(y_0 + z_0 t) \mathrm{e}^{\lambda t},\\[1ex]
z(t)&=z_0\mathrm{e}^{\lambda t}.
\end{aligned}\right.
\end{align}
By substituting (\ref{r(t) 4}) into
$\left\|\dot{r}(t)\times\ddot{r}(t)\right\|^2$,
$\left\|\dot{r}(t)\right\|^6$ and
$\left(\dot{r}(t),\ddot{r}(t),\dddot{r}(t)\right)$,
we have
\begin{align*}
\left\|\dot{r}(t)\times\ddot{r}(t)\right\|^2 &= f(t) \cdot \mathrm{e}^{4\lambda t}, \\
\left\|\dot{r}(t)\right\|^6 &= g(t) \cdot \mathrm{e}^{6\lambda t},
\end{align*}
and
\begin{align*}
\left(\dot{r}(t),\ddot{r}(t),\dddot{r}(t)\right) = -\lambda^3 z_0^3 \cdot \mathrm{e}^{3\lambda t},
\end{align*}
where $f(t)$ and $g(t)$ are polynomials in $t$.
Thus, we obtain
\begin{align*}
\kappa^2(t)
=\left( \frac{\left\|\dot{r}(t)\times\ddot{r}(t)\right\|}{\left\|\dot{r}(t)\right\|^3} \right)^2
=\frac{ f(t) }{ g(t) \cdot \mathrm{e}^{2\lambda t} },
\end{align*}
and
\begin{align*}
\tau (t)
=\frac{\left(\dot{r}(t),\ddot{r}(t),\dddot{r}(t)\right)}{\left\|\dot{r}(t)\times\ddot{r}(t)\right\|^2}
=\frac{ -\lambda^3 z_0^3 }{ f(t) \cdot \mathrm{e}^{\lambda t} }.
\end{align*}
If $\lambda\neq0$, then
$f(t)= \left( \frac14 \lambda^4 z_0^4 \right) \cdot t^4 + \sum_{i=0}^3 a_i t^i$ is a quartic polynomial in $t$,
and $g(t)= \left( \frac{1}{64} \lambda^6 z_0^6 \right) \cdot t^{12} + \sum_{i=0}^{11} b_i t^i$ is a polynomial of degree 12 in $t$;
if $\lambda=0$, then $f(t)=z_0^4$ is a constant, and $g(t)=z_0^6 t^6 + \sum_{i=0}^{5} b_i t^i$ is a polynomial of degree $6$ in $t$.

\end{document}